\newcounter{sarrow}
\newtheorem{thm}{Theorem}[section]
\newtheorem{lem}[thm]{Lemma}
\newtheorem{prop}[thm]{Proposition}
\newtheorem{cor}[thm]{Corollary}
\theoremstyle{definition}
\newtheorem{notation}[thm]{Notation}
\newtheorem{ques}[thm]{Question}
\newtheorem{Def}[thm]{Definition}
\newtheorem{hypo}[thm]{Hypothesis}
\theoremstyle{remark}
\newtheorem{rem}[thm]{Remark}
\newtheorem{ex}[thm]{Example}
\newcommand{\titre}{Poisson Deleting Derivations Algorithm and Poisson Spectrum}
\newcommand{\Z}{\mathbb{Z}}
\newcommand{\Q}{\mathbb{Q}}
\newcommand{\C}{\mathbb{C}}
\newcommand{\K}{\mathbb{K}}
\newcommand{\al}{\alpha}
\newcommand{\be}{\beta}
\newcommand{\de}{\delta}
\newcommand{\la}{\lambda}
\newcommand{\bo}{\boldsymbol}
\newcommand{\F}{\mbox{\rm Frac\,}}
\newcommand{\Sp}{\mbox{\rm Spec\,}}
\newcommand{\PS}{\mbox{\rm P.Spec\,}}
\newcommand{\tPS}{\footnotesize\mbox{\rm P.Spec\,}}
\newcommand{\car}{\mbox{\rm char\,}}
\newcommand{\id}{\mbox{\rm id}}
\newcommand{\D}{\mbox{\rm Der}}
\newcommand{\Imm}{\mbox{\rm Im}}
\newcommand{\ov}{\overline}
\begin{document}

\title{\titre} \author{St\'ephane Launois and C\'esar Lecoutre\thanks{The second author thanks EPSRC for its support.}}

\date{}

\maketitle

\begin{center}
\textit{School of Mathematics, Statistics and Actuarial Science (SMSAS), Cornwallis Building, University of Kent, Canterbury, Kent CT2 7NF, United Kingdom}
\end{center}
\begin{center}
{\tt S.Launois@kent.ac.uk} and {\tt C.Lecoutre@kent.ac.uk}
\end{center}


\begin{abstract}
	In \cite{Cau1} Cauchon introduced the so-called deleting derivations algorithm. This algorithm was first used in noncommutative algebra to prove catenarity in generic quantum matrices, and then to show that torus-invariant primes in these algebras are generated by quantum minors. Since then this algorithm has been used in various contexts. In particular, the matrix version makes a bridge between torus-invariant primes in generic quantum matrices, torus-orbits of symplectic leaves in matrix Poisson varieties and totally nonnegative cells in totally nonnegative matrix varieties \cite{GLL}. This led to recent progress in the study of totally nonnegative matrices such as new recognition tests, see for instance \cite{Lalen}. The aim of this article is to develop a Poisson version of the deleting derivations algorithm to study the Poisson spectra of the members of a class $\mathcal{P}$ of polynomial Poisson algebras. It has recently been shown that the Poisson Dixmier-Moeglin equivalence does not hold for all polynomial Poisson algebras \cite{BLSM}. Our algorithm allows us to prove this equivalence for a significant class of Poisson algebras, when the base field is of characteristic zero. Finally, using our deleting derivations algorithm, we compare topologically spectra of quantum matrices with Poisson spectra of matrix Poisson varieties.
\end{abstract}

\vskip .5cm
\noindent
{\em 2010 Mathematics subject classification:} 17B63, 20G42

\vskip .5cm
\noindent
{\em Key words:} Poisson Algebra; Poisson spectrum; Poisson Dixmier-Moeglin equivalence

\section*{Introduction}

	Poisson algebras have been intensively and widely studied since their first appearance, both on their own and in connection with other areas of mathematics. For instance, we refer to \cite{LGPV} where Poisson structures are studied from the differential geometry point of view, \cite{DR} where links with number theory are made or \cite{Goo2} for the connection with noncommutative algebra; this literature is of course non exhaustive. In this paper we study Poisson spectra of certain Poisson polynomial algebras. Different aspects of this topic have been investigated previously: the Poisson Dixmier-Moeglin equivalence is studied in \cite{BLSM}, \cite{Goo1}, \cite{GL} and \cite{Oh2}, links between Poisson spectra and their quantum analogues are investigated in \cite{GoLet3}, \cite{J1}, \cite{Oh2} and \cite{Yak} and Poisson spectra of Jacobian Poisson structures and generalisations in higher dimensions are studied in \cite{JO2} and \cite{JO1}.

	Inspired by \cite{Cau1}, we develop a method to study the algebras of a class $\mathcal{P}$ of iterated Poisson-Ore extensions over a field $\K$ of arbitrary characteristic. More precisely for $A\in\mathcal{P}$, the (characteristic-free) \textit{Poisson deleting derivations algorithm} consists of performing several explicit changes of variables inside the field of fractions $\F A$ of $A$. At each step of the algorithm we obtain a sequence of $n$ algebraically independent elements of $\F A$, where the integer $n$ corresponds to the number of indeterminates in $A$. The subalgebra of $\F A$ generated by these elements is a Poisson algebra with a "simpler" Poisson bracket than the one obtained at the previous step. Moreover the Poisson algebras corresponding to two consecutive steps, say $C_{j+1}$ and $C_j$, satisfy:
\[C_{j+1}S_j^{-1}=C_jS_j^{-1}\]
for a given multiplicatively closed set $S_j$. After the last step, we get algebraically independent elements $T_1,\dots,T_n$ of $\F A$ such that the algebra $\ov{A}$ they generate is a \textit{Poisson affine space}, i.e. $\ov{A}$ is a polynomial algebra $\K[T_1,\dots,T_n]$ with Poisson bracket on the generators given by $\{T_i,T_j\}=\la_{ij}T_iT_j$ for all $i,j$, where $(\la_{ij})\in\mathcal{M}_n(\K)$ is a skew-symmetric matrix. In particular the algorithm shows that $\F A=\F \ov{A}$ as Poisson algebras. Therefore we retrieve the results of Poisson birational equivalence obtained in \cite{LL} (see also \cite{GL} in characteristic zero), that is the Poisson algebras of the class $\mathcal{P}$ satisfy the quadratic Poisson Gel'fand-Kirillov problem (see \cite{GL} and \cite{LL} for more details).
	
	 For a Poisson algebra $A$ we denoted by $\PS(A)$ the set of prime ideals which are also Poisson ideals. We refer to this set as the \textit{Poisson spectrum} of $A$ (see Remark \ref{pp} at the end of the introduction). The set $\PS(A)$ is equipped with the induced Zariski topology from the spectrum $\Sp(A)$ of $A$. When $A\in\mathcal{P}$, our algorithm allows us to define an embedding $\varphi$ from $\PS(A)$ to $\PS(\ov{A})$ called the \textit{canonical embedding}. This embedding will be our main tool for studying Poisson spectra. One of its important properties is that for $P\in\PS(A)$ we have a Poisson algebra isomorphism:
 	\[\F\Big(\frac{A}{P}\Big)\cong\F\Big(\frac{\ov{A}}{\varphi(P)}\Big).\]
Note that this isomorphism reduces the quadratic Poisson Gel'fand-Kirillov problem for the Poisson prime quotients of $A$ to the quadratic Poisson Gel'fand-Kirillov problem for the Poisson prime quotients of a Poisson affine space. As in the noncommutative case, the canonical embedding leads to a partition of $\PS(A)$ indexed by a subset $W_P'$ of $W:=\mathscr{P}([\mspace{-2 mu} [1,n] \mspace{-2 mu} ])$, the powerset of $[\mspace{-2 mu} [1,n] \mspace{-2 mu} ]:=\{1,\dots,n\}$. More precisely, for $w\in W$, we set:
\begin{equation}
\label{psw}
\PS_w(\ov{A}):=\big\{P\in\PS(\ov{A})\ |\ Q\cap\{T_1,\dots,T_n\}=\{T_i\ |\ i\in w\}\big\},
\end{equation}
where we recall that $T_1,\dots,T_n$ are the generators of the Poisson affine space $\ov{A}$. These sets form a partition of $\PS(\ov{A})$ which induces a partition on $\PS(A)$ as follows:
\[\PS(A)=\bigsqcup_{w\in W'_P}\varphi^{-1}\big(\PS_w(\ov{A})\big),\quad\text{where}\quad W'_P:=\{ w \in W ~|~ \varphi^{-1}\big(\PS_w(\ov{A})\big)\neq\emptyset \}.\]
  This partition of $\PS(A)$ is called the {\em canonical partition}, and the elements of $W'_P$ will be called the {\em Cauchon diagrams associated to A}, or Cauchon diagrams for short. For $w\in W'_P$, the set $\varphi^{-1}\big(\PS_w(\ov{A})\big)$ is called the {\em stratum} associated to $w$. We study the topologico-algebraic properties of those strata in Section \ref{taprop}, our main result being that for $w\in W'_P$ the image of the stratum associated to $w$ is a closed subset of $\PS_w(\ov{A})$ and that $\varphi$ induces a homeomorphism from this stratum to its image. In Section \ref{PDM} we turn our attention to Poisson primitive spectra of the algebras of the class $\mathcal{P}$. In particular our algorithm allows us to prove the Poisson Dixmier-Moeglin equivalence for the algebras of the class $\mathcal{P}$ when $\car\K=0$. For information on the original Dixmier-Moeglin equivalence, as well as its Poisson version we refer to \cite{BLSM} and \cite{Goo1}. We briefly recall here the Poisson version. Let $A$ be a Poisson $\K$-algebra and $P\in\PS(A)$. The ideal $P$ is said to be \emph{locally closed} if the point $\{P\}$ is a locally closed point of $\PS(A)$. Let $B$ a be Poisson algebra. The Poisson centre of $B$ is the Poisson subalgebra $Z_P(B):=\{a\in B\ |\ \{a,-\}\equiv0\}$. The ideal $P$ is said to be \emph{Poisson rational} provided the field $Z_P\big(\F(A/P)\big)$ is algebraic over the ground field $\K$. For $J$ an ideal of $A$, there is a largest Poisson ideal contained in $J$ that is called the \emph{Poisson core} of $J$. Poisson cores of maximal ideals of $A$ are called \emph{Poisson primitive ideals}. We say that the Poisson Dixmier-Moeglin equivalence holds for the Poisson algebra $A$ if the following sets coincide:
\begin{enumerate}[topsep=0pt,itemsep=-1ex,partopsep=1ex,parsep=1ex]
	\item[(1)] the set of Poisson primitive ideals;
	\item[(2)] the set of locally closed Poisson ideals;
	\item[(3)] the set of Poisson rational ideals.
\end{enumerate}
It is shown in \cite{Oh2} that we have the inclusions $(2)\subseteq(1)\subseteq(3)$ for all affine Poisson algebras over a base field of characteristic zero. However the inclusion $(3)\subseteq(2)$ is not always satisfied as there exist counterexamples in all Krull dimension $d\geq4$ (see \cite{BLSM}). All algebras of the class $\mathcal{P}$ are affine Poisson algebras, therefore it only remains to show the inclusion $(3)\subseteq(2)$, as long as $\car\K=0$. It is known that Poisson affine spaces satisfy the Poisson Dixmier-Moeglin equivalence, see \cite[Example 4.6]{Goo1} for instance. In Section \ref{PDM} this fact together with the canonical embedding will allow us to prove the Poisson Dixmier-Moeglin equivalence for all algebras of the class $\mathcal{P}$. Even better, the Poisson primitive ideals are exactly the Poisson prime ideals that are maximal in their strata. Note that in \cite{GL} the Poisson Dixmier-Moeglin equivalence was shown for a class of Poisson algebras supporting rational torus actions. In our assumptions we do not require the existence of any torus action, and we indeed give an example (see Example \ref{examplePDME}), where previous results do not apply. 

	In \cite{Cau1} Cauchon uses his deleting derivations algorithm to obtain information on the spectra of the algebras of a class $\mathcal{R}$ of iterated Ore extensions (i.e. the algebras satisfying the hypotheses of \cite[Section 3.1]{Cau1}). These algebras are deformation of Poisson algebras of the class $\mathcal{P}$. More precisely, we are in the following setting. Let $R_t$ be an iterated Ore extension over $\K[t^{\pm1}]$:
\[R_t=\K[t^{\pm1}][x_1][x_2;\sigma_2,\Delta_2]\cdots[x_n;\sigma_n,\Delta_n],\]
such that for all $2\leq i\leq n$:
\begin{itemize}[noitemsep,topsep=0pt,parsep=2pt,partopsep=0pt]
\item $R_{t}^{<i}$ denotes the subalgebra of $R_t$ generated by $t^{\pm1},x_1,\dots,x_{i-1}$,
\item $\sigma_i$ is a $\K[t^{\pm1}]$-automorphism of $R_{t}^{<i}$ such that $\sigma_i(x_j)=t^{\la_{ij}}x_j$ for all $1\leq j<i$, where the scalars $\la_{ij}$ are integers,
\item $\Delta_i$ is a locally nilpotent $\K[t^{\pm1}]$-linear $\sigma_i$-derivation of $R_{t}^{<i}$,
\item $\sigma_i\Delta_i=t^{\eta_i}\Delta_i\sigma_i$ for some nonzero integer $\eta_i$,
\item $\Delta_i^k(R_{t}^{<i})\subseteq(t-1)^k(k)!_{t^{\eta_i}}R_{t}^{<i}$ for all $k\geq0$,
\item $A:=R_t/(t-1)R_t$ is commutative.
\end{itemize}

	 We fix a scalar $q\in\K^{\times}$ which is not a root of unity. Then, the algebra $R_q:=R_t/(t-q)R_t$ belongs to the class $\mathcal{R}$, and the algebra $A$ is a Poisson algebra which belongs to the class $\mathcal{P}$ (see \cite[Theorem 4.2]{LL}). The Poisson bracket on $A$ is given by the informal formula:
\begin{equation}
\label{scl}
\big\{r+(t-1)R_t,s+(t-1)R_t\big\}=\frac{rs-sr}{t-1}+(t-1)R_t,
\end{equation}
for all $r,s\in R_t$. We say that the algebra $R_q$ is a \textit{deformation} of the Poisson algebra $A$, and that $A$ is the \textit{semiclassical limit} of the algebra $R_t$ at $t-1$. The diagram of Figure \ref{diagsml} illustrates this situation.

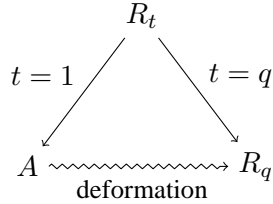
\begin{figure}[!ht]
	\centering
		\begin{tikzpicture}
	\node (a) at (1.5,0){$R_t$};
	\node (b) at (0,-2){$A$};
	\node (c) at (3,-2){$R_q$};
	\node (aa) at (0.2,-0.8){$t=1$};
	\node (bb) at (2.8,-0.8){$t=q$};
	\node (cc) at (1.5,-2.3){\small{deformation}};
	\draw [->] (a) to (b);
	\draw [->] (a) to (c);
	\draw [->,line join=round, decorate, decoration={	zigzag,
  									segment length=4,
    									amplitude=.9,post=lineto,
    									post length=2pt}]  (b) to (c);
		\end{tikzpicture}
	\caption{Deformation}
	\label{diagsml}
\end{figure}

In such a deformation-quantisation context, it is usually expected that the algebra $R_q$ and the Poisson algebra $A$ share similarities. For instance it is conjectured in \cite[Section 9.1]{Goo2} that there should be a homeomorphism between the spectrum of the generic quantised coordinate ring of an affine algebraic variety $V$ and the Poisson spectrum of its semiclassical limit $\mathcal{O}(V)$ when $\K$ is algebraically closed of characteristic zero. This conjecture has been investigated for several algebras, for instance we refer to the recent works \cite{Fr} and \cite{Yak}. In particular, building on previous work of Hodges-Levasseur and Joseph, progress have been made by Yakimov \cite{Yak} towards obtaining a homeomorphism between the symplectic leaves of a connected, simply connected complex algebraic group $G$ and the primitive spectrum of the quantized coordinate ring $R_q[G]$. 

	In light of this, it would be natural to ask whether or not there exists a homeomorphism between $\Sp(R_q)$ and $\PS(A)$. However, it is not always the case, and a counterexample is provided by the algebra $R_t$ generated over $\K[t^{\pm 1}]$ by $x$ and $y$ subject to:
\[xy-tyx=(t-1)^2.\]
In view of (\ref{scl}) we have:
\[\big\{x+(t-1)R_t,y+(t-1)R_t\big\}=(x+(t-1)R_t)(y+(t-1)R_t),\]
so that the algebra $A$ is a Poisson affine plane.
On the other hand the algebra $R_q$ is isomorphic to the first quantum Weyl algebra. In particular $R_q$ has a unique height one prime ideal, and its spectrum cannot be homeomorphic to the Poisson spectrum of $A$.

	In this article, we propose a sufficient condition for such a homeomorphism to exist (see Question \ref{question1} below). Toward describing this sufficient condition, we recall that Cauchon defines a partition of the spectrum $\Sp(R_q)$ indexed by a subset $W'$ of $W$, see \cite[Proposition 4.4.1]{Cau1}. Similarly, our algorithm allows us to define a partition of the Poisson spectrum $\PS(A)$ indexed by a subset $W'_P$ of $W$. Again it would be natural to ask whether or not these sets coincide, but the same example as above shows that it is not always the case. Indeed, from (\ref{psw}) it is clear that we have $W'_P=W=\big\{\emptyset,\{1\},\{2\},\{1,2\}\big\}$, whereas we have $W'=\big\{\emptyset,\{1\}\big\}$, by \cite[Section 7.2.1.2]{myphd} for instance. In Section \ref{PM}, we prove that $W'=W_P'$ for the algebra of $m \times p$ quantum matrices, and we use this fact to prove the following result.

\begin{thm}[Proposition \ref{www}] 
\label{thm-section5} Suppose that $\car\K=0$. Let $R=\mathcal{O}_q\big(\mathcal{M}_{m,p}(\K)\big)$ be the algebra of quantum matrices and $A=\mathcal{O}_q\big(\mathcal{M}_{m,p}(\K)\big)$ its semiclassical limit. There exists a bijection between $\Sp(R)$ and $\PS(A)$, which restricts to homeomorphisms between the strata $\Sp_w(R)$ and $\PS_w(A)$ for all $w\in W'=W_P'$.
\end{thm}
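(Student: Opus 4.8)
The plan is to run Cauchon's deleting derivations algorithm on $R$ and the Poisson deleting derivations algorithm of this paper on its semiclassical limit $A$ in parallel, and to show that for quantum matrices the two are compatible. Write $\ov{R}=\mathcal{O}_{\Lambda}(\K^{mp})$ for the quantum affine space produced by Cauchon's algorithm on $R$, with its canonical embedding $\psi\colon\Sp(R)\hookrightarrow\Sp(\ov{R})$ and set of Cauchon diagrams $W'$, and write $\ov{A}$ for the Poisson affine space produced by our algorithm on $A$, with its canonical embedding $\varphi\colon\PS(A)\hookrightarrow\PS(\ov{A})$ and set $W'_P$. The first step is a step-by-step comparison of the two algorithms. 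Because the defining relations of quantum matrices involve no inhomogeneous terms, at each stage the change of variables used by Cauchon on $\F R_t$ reduces, modulo $t-1$, to the change of variables used by our algorithm on $\F A$; it follows that $\ov{A}$ is the Poisson affine space whose structure matrix is the very same $\Lambda$ that occurs in $\ov{R}$ (now read as a matrix of Poisson structure constants instead of commutation exponents), that $\psi$ and $\varphi$ are governed by the same denominators, and that the two partitions are indexed by the same subset of $W$; in particular $W'=W'_P$. This comparison is carried out in Section \ref{PM}.

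The second step identifies the strata of the two affine spaces. Fix $w\in W$ and put $d_w:=mp-|w|$. Killing the coordinates indexed by $w$ and inverting the remaining ones identifies $\Sp_w(\ov{R})$ with the prime spectrum of the quantum torus $\mathcal{O}_{\Lambda_w}\big((\K^{\times})^{d_w}\big)$, where $\Lambda_w$ is the corresponding principal submatrix of $\Lambda$; since $q$ is not a root of unity, that spectrum is homeomorphic to $\Sp$ of the centre of the quantum torus, a Laurent polynomial ring over $\K$ in $d_w-\mathrm{rank}(\Lambda_w)$ indeterminates indexed by a $\Z$-basis of $\ker_{\Z}\Lambda_w$. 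In the same way $\PS_w(\ov{A})$ is homeomorphic to the Poisson spectrum of the Poisson torus with structure matrix $\Lambda_w$, which, because $\car\K=0$, is in turn homeomorphic to $\Sp$ of its Poisson centre --- again the Laurent polynomial ring in $d_w-\mathrm{rank}(\Lambda_w)$ indeterminates indexed by a $\Z$-basis of $\ker_{\Z}\Lambda_w$. Using the same basis on both sides produces a homeomorphism $\theta_w\colon\Sp_w(\ov{R})\to\PS_w(\ov{A})$ for every $w$.

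The third step transports the strata of $R$ and $A$ across $\theta_w$. By Cauchon's theory \cite{Cau1} the embedding $\psi$ restricts to a homeomorphism of $\Sp_w(R)$ onto a closed subset of $\Sp_w(\ov{R})$, and by the main result of Section \ref{taprop} the embedding $\varphi$ restricts to a homeomorphism of $\PS_w(A)$ onto a closed subset of $\PS_w(\ov{A})$. The subset carved out inside $\Sp_w(\ov{R})$ is described in terms of the denominators appearing in Cauchon's changes of variables; by the step-one comparison these descriptions reduce modulo $t-1$ to those carving out $\varphi\big(\PS_w(A)\big)$ inside $\PS_w(\ov{A})$, so $\theta_w$ sends $\psi\big(\Sp_w(R)\big)$ onto $\varphi\big(\PS_w(A)\big)$. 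Composing, we obtain a homeomorphism $\Sp_w(R)\cong\PS_w(A)$ for each $w\in W'=W'_P$. Since the families $\big(\Sp_w(R)\big)_{w\in W'}$ and $\big(\PS_w(A)\big)_{w\in W'_P}$ partition $\Sp(R)$ and $\PS(A)$ respectively, gluing these homeomorphisms yields a bijection $\Sp(R)\to\PS(A)$ with the required properties.

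I expect the main obstacle to be the first step: one must verify, with some care, that running the Poisson algorithm on $A$ mirrors Cauchon's run on $R$ term by term --- that the auxiliary multiplicative sets, the intermediate Poisson-Ore extensions and their quantum counterparts, and the explicit changes of variables all correspond after reduction modulo $t-1$ --- since both the equality $W'=W'_P$ and the matching of the stratum images in the third step rely on it. A secondary point is to make the passage from a stratum to a torus canonical enough to intertwine the two canonical embeddings rather than merely yielding an abstract homeomorphism; the hypothesis $\car\K=0$ enters precisely in identifying the Poisson spectrum of a Poisson torus with the prime spectrum of its Poisson centre, which can fail in positive characteristic.
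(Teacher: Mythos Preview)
Your overall architecture is close to the paper's, but Step~3 hides a genuine gap. In Cauchon's quantum setting the image $\psi\big(\Sp_w(R)\big)$ is always \emph{all} of $\Sp_w(\ov{R})$: this is a general feature of \cite{Cau1}, because in the noncommutative algebra the two-sided ideal generated by $x_j$ automatically contains $\delta_j(x_i)$ (from the relation $x_jx_i-q^{\la_{ji}}x_ix_j=\delta_j(x_i)$), so the membership criterion is always met. In the Poisson setting this fails: the ordinary ideal $\langle U_j\rangle$ need not contain $\de_j(U_i)$, the Poisson ideal $\langle U_j\rangle_P$ can be strictly larger (Remark~\ref{ppss}), and indeed Example~\ref{ex4} exhibits $w\in W'_P$ with $\varphi\big(\PS_w(A)\big)\subsetneq\PS_w(\ov{A})$. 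Hence your claim that ``these descriptions reduce modulo $t-1$'' cannot be correct as a general principle, and it is precisely what you would need to justify that $\theta_w$ carries one closed subset onto the other. Moreover, your $\theta_w$ is built abstractly by matching (Poisson) centres, not by reduction modulo $t-1$, so there is no a priori reason it should respect any such description.

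The paper sidesteps this by invoking a result specific to Poisson matrices (\cite[Theorem~7.3.8]{myphd}) asserting that $\varphi$ is \emph{surjective} onto $\PS_w(\ov{A})$ for each $w\in W'_P$; this uses the natural torus action. Once that is known, the proof is short: both $\Sp_w(R)$ and $\PS_w(A)$ are homeomorphic to the full strata $\Sp_w(\ov{R})$ and $\PS_w(\ov{A})$, and each of those is homeomorphic to $\Sp$ of the same Laurent polynomial ring in $\dim_{\Q}\ker M(w)$ variables (via the centre on the quantum side and, using $\car\K=0$, the Poisson centre on the Poisson side). No matching of closed subsets under $\theta_w$ is needed. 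Your Steps~1 and~2 are essentially right and agree with the paper; the missing ingredient is the Poisson-side surjectivity onto the strata of $\ov{A}$, which you should either prove or cite rather than deduce from a ``modulo $t-1$'' heuristic.
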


To prove this theorem we show that for $w\in W'=W_P'$ there is a homeomorphism between the strata $\Sp_w(R)$ and $\PS_w(A)$. However we deduce this homeomorphism from the canonical embedding, which is known to be continuous only when being restricted to a stratum. Therefore it is unclear whether the bijection of the theorem above is a homeomorphism or not. In small dimensions methods from \cite{BG2} and their Poisson analogues \cite{Fr} could be used to decide this question, but their computational nature would prevent use of them in the general case. 

In view of the above discussion, it is natural to ask the following question. 

\begin{ques}
\label{question1}
Let $R_t$ be an iterated Ore extension as above and suppose that $W'=W_P'$. Is there a homeomorphism between $\Sp(R_q)$ and $\PS(A)$?
\end{ques}

We note that the algebra generated over $\K[t^{\pm 1}]$ by $x$ and $y$ subject to $xy-tyx=(t-1)^2$ does not satisfy the condition that $W'=W_P'$.

\begin{rem}
\label{pp} The Poisson spectrum of a Poisson algebra is usually defined in a more general way. For a Poisson algebra $A$, a Poisson-prime ideal $P$ is a Poisson ideal such that if whenever $IJ\subseteq P$ for some Poisson ideals $I,J$ of $A$, then either $I\subseteq P$ or $J\subseteq P$. It is clear that a Poisson and prime ideal is a Poisson-prime ideal. If $A$ is noetherian and the characteristic of the base field is zero, then the converse is true thanks to \cite[Lemma 3.3.2]{Dix1}. 
The method we developed in Section \ref{ddap} does not apply to non prime Poisson-prime ideals. 	However our approach includes all the Poisson-prime ideals in the characteristic zero case and deals with a significant set of Poisson-prime ideals in positive characteristic. 

	In the situation described previously our main goal is to compare the spectrum of $R_q$ (for $q$ not a root of unity) with the Poisson spectrum of $A$. However even in the simplest example these spectra are not homeomorphic when we consider Poisson-prime ideals. Indeed, assume that $\K$ is algebraically closed and that $\car\K=p>0$. We denote by $R_t=\K[t^{\pm1}][x_1][x_2;\sigma_2]$ the iterated Ore extension such that $\sigma_2(t)=t$ and $\sigma_2(x_1)=tx_1$. Then $R_q$ is a quantum affine space for a non root of unity $q\in\K^{\times}$, and its set of prime ideals is well known, see \cite[II.1.2]{BG} for instance. In particular the principal ideals generated by $x_1$ and $x_2$ are the only height one prime ideals in $R_q$. The Poisson algebra $A$ is the Poisson affine space $A=\K[X_1,X_2]$ with $\{X_1,X_2\}=X_1X_2$. In addition of the ideals generated by $X_1$ and $X_2$, there are infinitely many other height one Poisson-prime ideals in $A$. For instance the ideal generated by the Poisson central element $X_1^p-1$ is a non prime Poisson ideal, and it follows from \cite[Lemma 3.5]{LL} that it is also Poisson-prime. Thus the set of Poisson-prime ideals of $A$ cannot be homeomorphic with the set of prime ideals in $R_q$. However it is easy to verify that there is a homeomorphism between the set of Poisson and prime ideals in $A$ and the set of prime ideals in $R_q$. 
	
	
	To summarise, when dealing with a Poisson algebra $B$ over a field of arbitrary characteristic we will restrict our attention to the study of the Poisson and prime ideals of $B$, and the set of such ideals will be denoted by $\PS(B)$.

\end{rem}

\section{Poisson deleting derivations algorithm}
\label{PDDA}

	The aim of this section is to define the Poisson deleting derivations algorithm. This algorithm is based on the Poisson deleting derivation homomorphism defined in \cite{LL}. We recall the definition and properties of this homomorphism in Section \ref{hom}, and introduce the class of Poisson algebras to which the Poisson deleting derivations algorithm applies in Section \ref{notation}.
	
\subsection{Poisson deleting derivation homomorphism}
\label{hom}

	Most of the definitions and results in this section are taken from \cite[Section 2]{LL}. We recall them here for the convenience of the reader. Poisson-Ore extensions are Poisson analogues of the well-known notion of Ore extension, or skew polynomial ring, in noncommutative ring theory. Their definition is based on the following result of Oh.
	
\begin{thm}
\label{Oh}
{\rm\cite[Theorem 1.1]{Oh1}}
Let $\al$ and $\de$ be $\mathbb{K}$-linear maps of a Poisson $\K$-algebra $A$. Then the polynomial algebra $R=A[X]$ is a Poisson algebra with Poisson bracket extending the Poisson bracket of $A$ and satisfying:
\begin{center}
$\{X,a\}=\al(a)X+\de(a)$ for all $a\in A$,
\end{center}
 if and only if $\al$ is a Poisson derivation of $A$, i.e. $\al$ is a $\mathbb{K}$-derivation of $A$ with:
\begin{center}
$\al(\{a,b\})=\{\al(a),b\}+\{a,\al(b)\}$ for all $a,b\in A$,
\end{center}
and 
$\de$ is a Poisson $\al$-derivation of $A$, i.e. $\de$ is a $\mathbb{K}$-derivation of $A$ with:
\begin{center}
$\de(\{a,b\})=\{\de(a),b\}+\{a,\de(b)\}+\al(a)\de(b)-\de(a)\al(b)$ for all $a,b\in A$.
\end{center}
\end{thm}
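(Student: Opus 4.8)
The statement is a verification which, organised properly, collapses onto a single identity: the two hypotheses on $\alpha$ and $\delta$ will turn out to be exactly the Jacobi identity for one distinguished triple. The plan is as follows. First I would record a normal-form/uniqueness observation: $R=A[X]$ is generated as a $\K$-algebra by $A$ together with $X$, so bilinearity, antisymmetry and the Leibniz rule force any Poisson bracket on $R$ extending that of $A$ and satisfying $\{X,a\}=\alpha(a)X+\delta(a)$ to be given by an explicit formula — for $f=\sum_i a_iX^i$ and $g=\sum_j b_jX^j$, $\{f,g\}$ is a polynomial in $X$ whose coefficients are polynomial expressions in the $\{a_i,b_j\}$, $\alpha(a_i),\alpha(b_j),\delta(a_i),\delta(b_j)$. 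In particular such a bracket is unique, and the whole question becomes: for which $\alpha,\delta$ does this formula actually define a Poisson bracket?

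For the ``only if'' direction I would apply the Jacobi identity to the triple $(X,a,b)$ with $a,b\in A$ and expand using only antisymmetry and the Leibniz rule. The result is an element of $R$ of degree at most one in $X$; its coefficient of $X$ is $\alpha(\{a,b\})-\{\alpha(a),b\}-\{a,\alpha(b)\}$ and its constant term is $\delta(\{a,b\})-\{\delta(a),b\}-\{a,\delta(b)\}-\alpha(a)\delta(b)+\delta(a)\alpha(b)$. Since both must vanish for all $a,b\in A$, we conclude that $\alpha$ is a Poisson derivation and $\delta$ a Poisson $\alpha$-derivation. For the converse, assume these two conditions. One first checks that the explicit formula above defines a bilinear, antisymmetric biderivation on $R$; the cleanest way is to assemble it from the $\K$-derivations $D_a$ of the polynomial ring $A[X]$ (for $a\in A$) that restrict to $\{a,-\}$ on $A$ and send $X\mapsto-\alpha(a)X-\delta(a)$, together with the unique $\K$-derivation $D_X$ of $A[X]$ that restricts on $A$ to the map $a\mapsto\alpha(a)X+\delta(a)$ — this is a derivation $A\to R$ precisely because $\alpha,\delta$ are and $A$ is central in $R$ — and sends $X\mapsto 0$, then verifying that these values are mutually consistent. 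This part uses neither hypothesis. It remains to verify the Jacobi identity, and here one invokes the standard fact that for any antisymmetric biderivation of a commutative algebra the Jacobiator $J(f,g,h):=\{f,\{g,h\}\}+\{g,\{h,f\}\}+\{h,\{f,g\}\}$ is a $\K$-derivation in each of its three arguments. Hence $J$ vanishes identically once it vanishes on all triples drawn from the generating set $A\cup\{X\}$: on triples inside $A$ this is the Jacobi identity of $A$; $J(X,X,a)$ for $a\in A$ and $J(X,X,X)$ vanish by antisymmetry; and $J(X,a,b)=0$ for $a,b\in A$ is exactly the computation of the forward direction, now read in reverse, using that $\alpha$ is a Poisson derivation to kill the coefficient of $X$ and that $\delta$ is a Poisson $\alpha$-derivation to kill the constant term.

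I do not expect a conceptual obstacle here; the work is organisational, and concentrated in two places. First, one must make the candidate bracket manifestly well defined, and the point to lean on is that $A[X]$ is a free $A$-module, so that a derivation of $A$ extends uniquely to $A[X]$ once the image of $X$ is prescribed — this is what legitimises the definitions of $D_a$ and $D_X$. Second, one must track the $X$-degree carefully when expanding $J(X,a,b)$ so as to separate cleanly the condition on $\alpha$ (coefficient of $X$) from the condition on $\delta$ (constant term); carrying out this expansion with a fixed convention, for instance always pushing $X$ to the right past elements of $A$ (legitimate since $A$ is central), avoids sign errors. Everything else — bilinearity, antisymmetry, the biderivation property, and the reduction of the Jacobi identity to generators — is routine once the triderivation property of the Jacobiator is in hand.
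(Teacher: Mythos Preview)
Your argument is correct and well organised. Note, however, that the paper does not give its own proof of this result: the theorem is quoted verbatim from Oh's paper \cite{Oh1} and used as a black box, so there is no in-paper proof to compare against. Your approach---reducing the Jacobi identity to the single triple $(X,a,b)$ via the triderivation property of the Jacobiator, and then reading off the conditions on $\alpha$ and $\delta$ from the $X$-coefficient and constant term respectively---is the standard one and matches what one finds in Oh's original argument.
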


\begin{Def}
Let $A$ be a Poisson algebra. The set of Poisson derivations of $A$ is denoted by $\D_P(A)$. Let $\al\in\D_P(A)$ and $\de$ be a Poisson $\al$-derivation of $A$. Set $R=A[X]$. The algebra $R$ endowed with the Poisson bracket from Theorem \ref{Oh} is denoted by $R=A[X;\al,\de]_P$ and called a \textit{Poisson-Ore extension}. As usual we write $A[X;\al]_P$ for $A[X;\al,0]_P$.
\end{Def}

This construction is easily iterated. We say that \textit{$R$ is an iterated Poisson-Ore extension over $A$} if 
	\[R=A[X_1;\al_1,\de_1]_P[X_2;\al_2,\de_2]_P\cdots[X_n;\al_n,\de_n]_P\]
for some Poisson derivations $\al_1,\dots,\al_n$ and $\al_i$-Poisson derivations $\de_i$ ($1\leq i\leq n$) of the appropriate Poisson subalgebras.

	Let $\bo{\la}=(\lambda_{ij}) \in \mathcal{M}_n(\K)$ be a skew-symmetric matrix. Then we define a Poisson bracket on the polynomial algebra $\K[X_1,\dots,X_n]$ by setting by $\{X_i,X_j\}:=\la_{ij}X_iX_j$ for all $i,j$. This Poisson algebra is called the \textit{Poisson affine $n$-space associated to $\bo\la$} and is denoted by $\K_{\bo\la}[X_1,\dots,X_n]$. It is clear that the Poisson affine $n$-space $\K_{\bo\la}[X_1,\dots,X_n]$ is an iterated Poisson-Ore extension of the form: \[\K[X_1][X_2;\al_2]_P\cdots[X_n;\al_n]_P,\]
where $\al_i$ is the Poisson derivation of the Poisson algebra $\K[X_1][X_2;\al_2]_P\cdots[X_{i-1};\al_{i-1}]_P$ such that $\al_i(X_j)=\la_{ij}X_j$ for all $1\leq j<i\leq n$.

	The main tool to define the characteristic-free Poisson deleting derivations algorithm is the existence of higher derivations which are compatible with Poisson brackets. We now fix the notation and terminology used in this article.

\begin{Def}
\label{hd}
Let $A$ be a Poisson $\K$-algebra, $\al\in\D_P(A)$ and $\eta\in\K$.
\begin{enumerate}
	\item[(\rm{1})] A \textit{higher derivation} on $A$ is a sequence of $\K$-linear maps $(D_i)_{i=0}^{\infty}=(D_i)$ such that:
		\begin{center}
$D_0=\id_A$ and $D_n(ab)=\sum\limits_{i=0}^{n}D_i(a)D_{n-i}(b)$ for all $a,b\in A$ and all $n\geq0$.
		\end{center}
A higher derivation is \textit{iterative} if $D_iD_j=\binom{i+j}{i}D_{i+j}$ for all $i,j\geq0$, and \textit{locally nilpotent} if for all $a\in A$ there exists $n\geq0$ such that $D_i(a)=0$ for all $i\geq n$.
	\item[(\rm{2})] A higher derivation $(D_i)$ is a \textit{higher $\al$-skew Poisson derivation} if for all $a,b\in A$	and all $n\geq0$:
		\begin{center}
$D_n(\{a,b\})=\sum\limits_{i=0}^{n}\{D_i(a),D_{n-i}(b)\}+i\big(\al D_{n-i}(a)D_i(b)-D_i(a)\al D_{n-i}(b)\big)$.
		\end{center}
	\item[(\rm{3})] A higher $\al$-skew Poisson derivation is a \textit{higher $(\eta,\al)$-skew Poisson derivation} if for all $i\geq0$:
		\begin{center}
$D_i\al=\al D_i+i\eta D_i$.
		\end{center}
	\item[(\rm{4})] We say that the derivation $\de$ of a Poisson-Ore extension $A[X;\al,\de]_P$ \textit{extends to a higher $(\eta,\al)$-skew Poisson derivation} if there exists a higher $(\eta,\al)$-skew Poisson derivation $(D_i)$ on $A$ such that $D_1=\de$.
	\end{enumerate}
\end{Def}

We first observe that to define a higher derivation $(D_i)$ on an algebra $A$, it is enough to give its values on a set of generators of $A$. Moreover $(D_i)$ is iterative (resp. locally nilpotent) on $A$, if it is iterative (resp. locally nilpotent) on a set of generators of $A$. Tedious computations show that it is also enough to check assertions $(2)$ and $(3)$ of Definition \ref{hd} on a set of generators. 

\begin{ex}
Let $A=\K[X]$. We define a higher derivation $(D_i)$ on $A$ by setting:
\begin{align*}
D_i(X):=\left\{ \begin{array}{lll}
X & i=0,\\
1 & i=1,\\
0 & i>1.
\end{array} \right.
\end{align*}
Note that by induction we have:
\begin{align*}
\label{ik}
D_i(X^k)=\binom{k}{i} X^{k-i}
\end{align*}
for all $i,k\geq0$. It is clear that $D_i$ is iterative and locally nilpotent. Let $\al:=X\partial_X$ and $\de:=\partial_X$ where $\partial_X$ denotes the usual partial derivative of $A$ with respect to $X$. Then $\al$ is a Poisson derivation of $A$ and $\de$ is a Poisson $\al$-derivation of $A$ such that $D_1=\de$, where $A$ is endowed with the trivial Poisson structure ($\{a,b\}=0$ for all $a,b\in A$). It follows easily that $\de$ extends to an iterative, locally nilpotent higher $(1,\al)$-skew Poisson derivation on $A$.
\end{ex}

	We now recall the Poisson deleting derivation homomorphism that was defined in \cite[Section 2.3]{LL}. Note that a Poisson bracket extends uniquely by localisation \cite[Section 2.4.2]{LGPV}, so in particular the Poisson bracket of a Poisson-Ore extension $A[X;\al,\de]_P$ uniquely extends to the Laurent polynomial ring $A[X^{\pm1}]$, and we denote this Poisson algebra by $A[X^{\pm1};\al,\de]_P$.

\begin{thm}
\label{iso}
{\rm \cite[Theorem 2.11]{LL}}
Let $A[X;\al,\de]_P$ be a Poisson-Ore extension, where $A$ is a Poisson $\K$-algebra. Suppose that $\de$ extends to an iterative, locally nilpotent higher $(\eta,\al)$-skew Poisson derivation $(D_i)$ on $A$ such that $\eta\in\K^{\times}$. Then the algebra homomorphism $\theta: A\rightarrow A[X^{\pm1}]$ defined by:
	\[\theta(a)=\sum_{i\geq 0}\frac{1}{\eta^i}D_i(a)X^{-i}\]
uniquely extends to a Poisson $\K$-algebra isomorphism:
	\[\theta: A[Y^{\pm1};\al]_P\stackrel{\cong}{\longrightarrow} A[X^{\pm1};\al,\de]_P\]
by setting $\theta(Y)=X$.
\end{thm}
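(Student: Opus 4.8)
The plan is to establish, in order, that $\theta$ is a well-defined $\K$-algebra homomorphism $A\to A[X^{\pm1}]$, that it extends uniquely (under the stated constraints) to an algebra homomorphism $A[Y^{\pm1}]\to A[X^{\pm1}]$ with $\theta(Y)=X$, that this extension is a morphism of Poisson algebras, and that it is bijective. Local nilpotence of $(D_i)$ guarantees that for each $a\in A$ all but finitely many $D_i(a)$ vanish, so $\theta(a)=\sum_{i\geq0}\eta^{-i}D_i(a)X^{-i}$ really lies in $A[X^{\pm1}]$ (indeed in $\bigoplus_{k\leq0}AX^k$). The map $\theta$ is $\K$-linear and sends $1$ to $1$, and multiplicativity is immediate from the higher derivation identity: expanding $\theta(a)\theta(b)$ in the commutative ring $A[X^{\pm1}]$ and collecting powers of $X$ gives $\sum_{n\geq0}\eta^{-n}\big(\sum_{i+j=n}D_i(a)D_j(b)\big)X^{-n}=\sum_{n\geq0}\eta^{-n}D_n(ab)X^{-n}=\theta(ab)$. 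Since $X$ is a unit in $A[X^{\pm1}]$, this homomorphism has a unique extension to an algebra homomorphism $\theta\colon A[Y^{\pm1}]\to A[X^{\pm1}]$ with $\theta(Y)=X$; this also yields the uniqueness assertion, because any map obeying the constraints is determined on the algebra generators $A\cup\{Y,Y^{-1}\}$.

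For Poisson compatibility it suffices, since the bracket is a biderivation and $\theta$ is an algebra map, to check $\theta(\{u,v\})=\{\theta(u),\theta(v)\}$ when $u,v$ run over the generators $A\cup\{Y,Y^{-1}\}$. The case $u=Y,\ v=Y^{-1}$ is trivial. For $u=Y,\ v=a\in A$ we have $\{Y,a\}=\al(a)Y$ in $A[Y^{\pm1};\al]_P$, so the left side is $\theta(\al(a))X$, while on the right, using $\{X,c\}=\al(c)X+\de(c)$ for $c\in A$ and $\{X,X^{-i}\}=0$, one gets $\{X,\theta(a)\}=\sum_{i\geq0}\eta^{-i}\big(\al(D_i(a))X+\de(D_i(a))\big)X^{-i}$; substituting $\de=D_1$, the iterativity relation $D_1D_i=(i+1)D_{i+1}$, and the relation $\al D_i=D_i\al-i\eta D_i$ from Definition~\ref{hd}(3), two of the resulting sums telescope against one another and what remains is exactly $\theta(\al(a))X$. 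The case $u=a,\ v=b$ in $A$ is the substantial one: expanding $\{\theta(a),\theta(b)\}$ by the Leibniz rule, using $\{X^{-i},X^{-j}\}=0$ and $\{X,c\}=\al(c)X+\de(c)$, and then invoking the higher $\al$-skew Poisson derivation identity of Definition~\ref{hd}(2) together with Definition~\ref{hd}(3) and iterativity, the whole double sum collapses to $\theta(\{a,b\})$. I expect this identity to be the main obstacle — it is a long but entirely mechanical manipulation, and the notion of a higher $(\eta,\al)$-skew Poisson derivation is designed precisely so that it goes through in arbitrary characteristic.

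It remains to prove bijectivity. For injectivity, take $u=\sum_k u_kY^k\neq0$ with top $Y$-degree $n$, so $u_n\neq0$; since $\theta(u_k)X^k=u_kX^k+\sum_{i\geq1}\eta^{-i}D_i(u_k)X^{k-i}$ has $X$-degree at most $k\leq n$, the coefficient of $X^n$ in $\theta(u)=\sum_k\theta(u_k)X^k$ equals $u_n\neq0$, whence $\theta(u)\neq0$. For surjectivity let $B=\theta(A[Y^{\pm1}])$; this is a subalgebra containing $X^{\pm1}$, so it is enough to show $A\subseteq B$. Put $A_r=\{a\in A\mid D_j(a)=0\text{ for all }j>r\}$; local nilpotence gives $A=\bigcup_{r\geq0}A_r$, and the iterativity relation $D_jD_i=\binom{i+j}{i}D_{i+j}$ shows $D_i(A_r)\subseteq A_{r-1}$ for every $i\geq1$ (since $i+j>r$ forces $D_{i+j}(a)=0$). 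Now induct on $r$: elements of $A_0=\bigcap_{j\geq1}\ker D_j$ satisfy $\theta(a)=a$, so $A_0\subseteq B$; and if $A_{r-1}\subseteq B$, then for $a\in A_r$ we have $\theta(a)-a=\sum_{i=1}^r\eta^{-i}D_i(a)X^{-i}\in B$ because each $D_i(a)\in A_{r-1}\subseteq B$, hence $a=\theta(a)-(\theta(a)-a)\in B$. Thus $A\subseteq B$ and $\theta$ is onto. Since a bijective morphism of Poisson algebras is a Poisson isomorphism, this completes the proof.
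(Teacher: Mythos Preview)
The paper does not actually prove this theorem: it is quoted verbatim from \cite[Theorem 2.11]{LL} and no argument is given here. So there is no in-paper proof to compare your proposal against.

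On its own merits, your argument is correct and follows the natural line one would expect: well-definedness from local nilpotence and the higher-derivation identity, Poisson compatibility checked on generators using Definition~\ref{hd}(2)--(3) and iterativity, injectivity by a leading-term argument, and surjectivity by the filtration $A_r=\{a\in A\mid D_j(a)=0\ \text{for }j>r\}$. Two small remarks. First, the verification of $\{\theta(a),\theta(b)\}=\theta(\{a,b\})$ for $a,b\in A$ is the heart of the matter; you are right that it is mechanical, but since you invoke both Definition~\ref{hd}(2) and (3) together with iterativity and the formula $\{X^{-i},c\}=-iX^{-i-1}(\al(c)X+\de(c))$, it would strengthen the write-up to display at least the key cancellation rather than asserting it. Second, you list $Y^{-1}$ among the generators but only verify $\{Y,a\}$ explicitly; it is worth a sentence noting that $\{Y^{-1},a\}=-Y^{-2}\{Y,a\}$ by the Leibniz rule, so compatibility for $Y^{-1}$ follows automatically once it is known for $Y$.
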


	We set $B:=A[X;\al,\de]_P$ and $S:=\{X^i\ |\ i\geq0\}$ so that we have $BS^{-1}=A[X^{\pm1};\al,\de]_P$. We deduce immediately the following result.

\begin{cor}
\label{equal}
$BS^{-1}$ contains a Poisson subalgebra $B'$ isomorphic to $A[Y;\al]_P$, and we have $B'S^{-1}=BS^{-1}$. In particular we have: 
\[\F\big( A[X;\al,\de]_P\big) =\F\big( B'\big) \cong \F\big( A[Y;\al]_P\big).\]
\end{cor}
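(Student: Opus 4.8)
The plan is to read off everything directly from the isomorphism $\theta$ of Theorem \ref{iso}. First I would set $B' := \theta\big(A[Y;\al]_P\big)$, where we view $A[Y;\al]_P$ as the Poisson subalgebra of $A[Y^{\pm1};\al]_P$ generated by $A$ and $Y$. Since $\theta$ is a Poisson $\K$-algebra isomorphism from $A[Y^{\pm1};\al]_P$ onto $BS^{-1}=A[X^{\pm1};\al,\de]_P$, its restriction to the Poisson subalgebra $A[Y;\al]_P$ is a Poisson algebra isomorphism onto $B'$; so $B'$ is a Poisson subalgebra of $BS^{-1}$ isomorphic to $A[Y;\al]_P$. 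Concretely, $B'$ is the subalgebra of $A[X^{\pm1};\al,\de]_P$ generated by $\theta(A)=\big\{\sum_{i\geq0}\eta^{-i}D_i(a)X^{-i}\ \big|\ a\in A\big\}$ (a finite sum for each $a$, by local nilpotence of $(D_i)$) together with $\theta(Y)=X$.

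Next I would verify $B'S^{-1}=BS^{-1}$. The inclusion $B'S^{-1}\subseteq BS^{-1}$ is immediate since $B'\subseteq BS^{-1}$ and $BS^{-1}$ is already localised at $S$. For the reverse inclusion it suffices to show $A\subseteq B'S^{-1}$ and $X\in B'S^{-1}$; the latter is clear because $X=\theta(Y)\in B'$. For the former, note that for each $a\in A$ we have $\theta(a)=D_0(a)+\sum_{i\geq1}\eta^{-i}D_i(a)X^{-i}= a + (\text{terms in }X^{-1}B)$, so $a = \theta(a) - \sum_{i\geq1}\eta^{-i}D_i(a)X^{-i}$. Here $\theta(a)\in B'$ and $X^{-1}\in S^{-1}$, while the coefficients $D_i(a)$ lie in $A$; an induction on the $X$-degree (or a straightforward induction on a generating set of $A$, which suffices since both $\theta$ and the relevant maps are determined by their values on generators) then shows every element of $A$, hence every monomial $aX^k$ with $k\geq0$, hence all of $B=A[X;\al,\de]_P$, lies in $B'S^{-1}$. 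Therefore $BS^{-1}\subseteq B'S^{-1}$, giving equality.

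Finally, the field-of-fractions statement follows formally: localisation does not change the field of fractions, so $\F(B)=\F(BS^{-1})=\F(B'S^{-1})=\F(B')$, and $\F(B')\cong\F\big(A[Y;\al]_P\big)$ because $B'\cong A[Y;\al]_P$ as Poisson algebras and an isomorphism of Poisson algebras extends uniquely to an isomorphism of their Poisson fields of fractions (the bracket extends uniquely by localisation, as recalled before Theorem \ref{iso}). Combining with $B=A[X;\al,\de]_P$ yields $\F\big(A[X;\al,\de]_P\big)=\F(B')\cong\F\big(A[Y;\al]_P\big)$.

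The only mildly delicate point is the equality $B'S^{-1}=BS^{-1}$, i.e. checking that the "correction terms" $\sum_{i\geq1}\eta^{-i}D_i(a)X^{-i}$ can be inverted away by an induction; everything else is a direct transport of structure along the already-established isomorphism $\theta$, so I do not expect any real obstacle here.
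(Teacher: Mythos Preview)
Your approach is exactly the paper's: it simply sets $B':=\theta\big(A[Y;\al]_P\big)$ and leaves the verifications implicit. Your induction for $B'S^{-1}=BS^{-1}$ is correct but unnecessary, since $\theta$ is already surjective onto $BS^{-1}$ and sends the multiplicative set $\{Y^i\}$ to $S=\{X^i\}$, so $BS^{-1}=\theta\big(A[Y^{\pm1};\al]_P\big)=\theta\big(A[Y;\al]_P\big)S^{-1}=B'S^{-1}$ directly.
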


\begin{proof}
Take $B':=\theta(A[Y;\al]_P)$.
\end{proof}

\subsection{A class of iterated Poisson-Ore extensions}
\label{notation}

In this section, we introduce the class of Poisson algebras that we will study in this paper. 

\begin{hypo}\ 
\label{hyp1}
\begin{itemize}
	\item[(\rm{1})] $A=\K[X_1][X_2;\al_2,\de_2]_P\cdots[X_n;\al_n,\de_n]_P$ is an iterated Poisson-Ore extension over $\K$. We set $A_i:=\K[X_1][X_2;\al_2,\de_2]_P\cdots[X_i;\al_i,\de_i]_P$ for all $1\leq i\leq n$.
	\item[(\rm{2})] Suppose that for all $1\leq j<i\leq n$ there exists $\la_{ij}\in\K$ such that $\al_i(X_j)=\la_{ij}X_j$. We set $\la_{ji}:=-\la_{ij}$ for all $1\leq j<i\leq n$.
	\item[(\rm{3})] For all $2\leq i\leq n$, assume that the derivation $\de_i$ extends to an iterative, locally nilpotent higher ($\eta_i,\al_i$)-skew Poisson derivation $(D_{i,k})_{k=0}^{\infty}$ on $A_{i-1}$, where $\eta_i$ is a \textbf{nonzero} scalar.
	\item[(\rm{4})] Assume that $\al_iD_{j,k}=D_{j,k}\al_i+k\la_{ij}D_{j,k}$ for all $2\leq j<i\leq n$ and all $k\geq0$.
\end{itemize}
\end{hypo}

\begin{notation}
We denote by $\mathcal{P}$ the class of iterated Poisson-Ore extensions which satisfy Hypothesis \ref{hyp1}.
\end{notation}

Note that, if $A=\K[X_1][X_2;\al_2,\de_2]_P\cdots[X_n;\al_n,\de_n]_P \in \mathcal{P}$, then the intermediate Poisson algebras $A_i$ from Hypothesis \ref{hyp1} also belong to $\mathcal{P}$. 

\begin{rem}
\label{caraczero}
In characteristic zero we have that $D_i=\frac{D_1^i}{i!}$ for all $i$ for any iterative higher derivation $(D_i)$. In particular it follows from \cite[Remark 5.1.2]{myphd} that in characteristic zero, one can replace assertions (3) and (4) of Hypothesis \ref{hyp1} by:
\begin{itemize}
\item[(\rm{3'})] Assume that for all $2\leq i\leq n$ the derivation $\de_i$ is locally nilpotent and that $\de_i\al-\al\de_i=\eta_i\de_i$ for some nonzero scalar $\eta_i$.
\end{itemize}
\end{rem}

	In the next sections we will need to use inductive arguments to define and study the Poisson deleting derivations algorithm. In the induction step we will need to re-arrange the order of the indeterminates of an iterated Poisson-Ore extension in $\mathcal{P}$. The following lemma will ensure that the new Poisson algebra is still in $\mathcal{P}$, so that one can apply the deleting derivation homomorphism to this new algebra, and thus proceed with the induction. In particular, to satisfy the hypothesis of Theorem \ref{iso} we need the scalars $\eta_i$ to be nonzero.

	The restriction of a linear map $f$ to a subspace $V$ of its domain will be denoted by $f|_V$.

\begin{lem}
\label{casn}
Let $A\in \mathcal{P}$ with $\de_{j+1}=\cdots=\de_{n}=0$. With the notation of Hypothesis \ref{hyp1}, we have the following.
\begin{enumerate}
	\item[\rm{(1)}] We can write $A=A_{j-1}[X_{j+1};\be_{j+1}]_P\cdots[X_n;\be_n]_P[X_j;\al'_j,\de'_j]_P$ where:
\begin{itemize}
		\item $\be_i|_{A_{j-1}}=\al_i|_{A_{j-1}}$ for all $j<i\leq n$ and $\be_i(X_l)=\la_{il}X_l$ for all $j<l<i$,
		\item $\al'_j|_{A_{j-1}}=\al_j$ and $\al'_j(X_l)=\la_{jl}X_l$ for all $j<l\leq n$,
		\item $\de'_j|_{A_{j-1}}=\de_j$ and $\de'_j(X_l)=0$ for all $j<l\leq n$.
\end{itemize}
	\item[\rm{(2)}] $\de'_j$ extends to an iterative, locally nilpotent higher ($\eta_j,\al'_j$)-skew Poisson derivation $(D'_{j,k})_{k=0}^{\infty}$ on $A_{j-1}[X_{j+1};\be_{j+1}]_P\cdots[X_n;\be_n]_P$ such that the restriction of $D'_{j,k}$ to $A_{j-1}$ coincides with $D_{j,k}$ for all $k\geq0$, and $D'_{j,k}(X_l)=0$ for all $k>0$ and all $j<l\leq n$.
\item[\rm{(3)}] $A=A_{j-1}[X_{j+1};\be_{j+1}]_P\cdots[X_n;\be_n]_P[X_j;\al'_j,\de'_j]_P$ also belongs to $\mathcal{P}$.
\end{enumerate}
\end{lem}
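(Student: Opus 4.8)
The plan is to verify the three assertions essentially by direct computation, proceeding in the order (1), then (2), then (3), where (3) is an immediate bookkeeping consequence of (1) and (2) together with the already-verified fact that intermediate algebras of $\mathcal{P}$ lie in $\mathcal{P}$. For part (1), since $\de_{j+1}=\cdots=\de_n=0$, each of $X_{j+1},\dots,X_n$ Poisson-commutes with $A_{j-1}$ only up to the linear factors $\al_i$, and crucially the bracket $\{X_i,X_j\}=\al_i(X_j)X_i=\la_{ij}X_jX_i$ (for $i>j$) is, by skew-symmetry of the $\la$'s, of the same homogeneous type whether we view $X_i$ before or after $X_j$. So I would first check that the polynomial algebra $A$ really is freely generated, in the reordered sequence $X_1,\dots,X_{j-1},X_{j+1},\dots,X_n,X_j$, over $A_{j-1}$; this is clear at the level of commutative polynomial rings. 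Then I would check that the prescribed maps $\be_i$ and $\al'_j,\de'_j$ are Poisson derivations / Poisson $\al'_j$-derivations of the appropriate subalgebras by invoking Theorem \ref{Oh} — here one uses Hypothesis \ref{hyp1}(2) to get $\al_i(X_j)=\la_{ij}X_j$ and the relation $\al_iD_{j,k}=D_{j,k}\al_i+k\la_{ij}D_{j,k}$ from Hypothesis \ref{hyp1}(4) with $k=1$ (i.e. $\al_i\de_j-\de_j\al_i=\la_{ij}\de_j$) to see that moving $X_j$ past each $X_i$ does not disturb closure of the bracket. Because higher derivations, and the conditions in Definition \ref{hd}, only need to be checked on generators (as the excerpt notes), all of this reduces to finitely many identities on the $X_l$'s.

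For part (2), I would define $(D'_{j,k})$ on the enlarged algebra $A_{j-1}[X_{j+1};\be_{j+1}]_P\cdots[X_n;\be_n]_P$ by declaring $D'_{j,k}|_{A_{j-1}}=D_{j,k}$, $D'_{j,0}=\id$, and $D'_{j,k}(X_l)=0$ for $k>0$ and $j<l\le n$, and extend multiplicatively via the higher-Leibniz rule. Iterativity and local nilpotence are inherited from $(D_{j,k})$ on $A_{j-1}$ and are trivial on the new generators $X_l$ (since only $D'_{j,0}$ acts nontrivially on them), so by the generator-reduction principle they hold on the whole algebra. The substantive points are: (a) the higher $\al'_j$-skew Poisson derivation identity, Definition \ref{hd}(2), which must be checked for pairs $(a,b)$ with $a\in A_{j-1}$, $b=X_l$, and $a=X_l$, $b=X_{l'}$ — for the mixed pair it comes down to compatibility of $D_{j,k}$ with the bracket $\{X_l,a\}=\be_l(a)X_l=\al_l(a)X_l$ for $a\in A_{j-1}$, which again uses Hypothesis \ref{hyp1}(4); and (b) the twist condition $D'_{j,k}\al'_j=\al'_j D'_{j,k}+k\eta_j D'_{j,k}$ of Definition \ref{hd}(3), which on $A_{j-1}$ is exactly the given $(\eta_j,\al_j)$-skew property of $(D_{j,k})$, and on each $X_l$ reads $D'_{j,k}(\la_{jl}X_l)=\la_{jl}D'_{j,k}(X_l)+k\eta_j D'_{j,k}(X_l)$, which is $0=0$ for $k>0$ and trivially true for $k=0$.

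Finally, for part (3): $A_{j-1}\in\mathcal{P}$ by the remark in the excerpt that intermediate algebras inherit membership; adjoining $X_{j+1},\dots,X_n$ with the $\be_i$ (and zero derivations) keeps us in $\mathcal{P}$ because Hypothesis \ref{hyp1}(2) holds by construction ($\be_i(X_l)=\la_{il}X_l$ and $\be_i|_{A_{j-1}}=\al_i|_{A_{j-1}}$), while (3) and (4) of the Hypothesis are vacuous or inherited since the $\de_i$ are zero; and adjoining $X_j$ last with $(\al'_j,\de'_j)$ satisfies Hypothesis \ref{hyp1}(2)--(4) precisely by parts (1) and (2) just proved — note $\eta_j\ne0$ is unchanged. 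The one item requiring a line of care in (3) is Hypothesis \ref{hyp1}(4) for the new ordering, i.e. the commutation of $\al'_j$ (now an outer derivation, since $X_j$ is outermost) with the higher derivations attached to the inner variables; but all the inner $\de$'s are zero, so their associated higher derivations are the identity-in-degree-zero sequences, for which the required identity is immediate.

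I expect the main obstacle to be part (2)(a): checking the higher $\al'_j$-skew Poisson derivation identity for the mixed pairs $(X_l, a)$ with $a\in A_{j-1}$, since this is where the interaction between the reordering, the twisting scalars $\la_{jl}$, and the higher-derivation structure of $(D_{j,k})$ all come into play simultaneously, and where one genuinely needs Hypothesis \ref{hyp1}(4) rather than merely the Poisson-Ore axioms. Everything else is either a reduction-to-generators argument or a direct inheritance of properties from $(D_{j,k})$ on $A_{j-1}$.
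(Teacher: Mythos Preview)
Your proposal is correct and follows essentially the same route as the paper, which is very terse: for (1) it just observes that $\{X_l,X_j\}=\la_{lj}X_lX_j$ for $l>j$ permits the reordering; for (2) it cites \cite[Lemma~3.1]{LL} and argues by induction on $l=j+1,\ldots,n$ (that lemma packages precisely the one-variable extension step you spell out on generators); and (3) is declared immediate from (1) and (2).

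One small imprecision in your sketch of (3): it is not true that ``all the inner $\de$'s are zero'' in the reordered presentation, since $\de_2,\ldots,\de_{j-1}$ are still there. What you need for Hypothesis~\ref{hyp1}(4) in the new ordering, beyond the trivial cases you mention, are the identities $\be_l D_{i,k}=D_{i,k}\be_l+k\la_{li}D_{i,k}$ (for $2\le i\le j-1<l\le n$) and $\al'_j D_{i,k}=D_{i,k}\al'_j+k\la_{ji}D_{i,k}$ (for $2\le i\le j-1$); but these are inherited directly from the original Hypothesis~\ref{hyp1}(4) because $\be_l|_{A_{j-1}}=\al_l|_{A_{j-1}}$ and $\al'_j|_{A_{j-1}}=\al_j$, so no new work is required.
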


\begin{proof}
(1) Since $\{X_l,X_j\}=\la_{lj}X_lX_j$ for all $j<l\leq n$, the order of the variables $X_j,\dots,X_n$ can be changed. The resulting Poisson ($\al_i$-)derivations are those described above.\\
(2) This is an easy induction using \cite[Lemma 3.1]{LL}.\\
(3) This follows directly from (1) and (2).
\end{proof}

\subsection{Poisson deleting derivations algorithm}
\label{ppdda}

Let $A=\K[X_1][X_2;\al_2,\de_2]_P\cdots[X_n;\al_n,\de_n]_P \in \mathcal{P}$. We continue using the notation of Hypothesis \ref{hyp1}.

	We are now ready to describe the Poisson deleting derivations algorithm. For $j$ running from $n+1$ to $2$ we define, by a decreasing induction, a sequence $(X_{1,j},\dots,X_{n,j})$ of elements of $\F A$. First for $j=n+1$ we set $(X_{1,j},\dots,X_{n,j}):=(X_{1},\dots,X_{n})$. Then for $2\leq j\leq n$ we set:
	\begin{align*}
	X_{i,j}:=\left\{
\begin{array}{ll}
X_{i,j+1} &\quad i\geq j,\\
\sum\limits_{k\geq0}\frac{1}{\eta_j^k}D_{j,k}(X_{i,j+1})X_{j,j+1}^{-k} &\quad  i<j,
\end{array}
\right.
\end{align*}
for all $1\leq i\leq n$. Moreover for all $2\leq j\leq n+1$ we set $C_j:=\K[X_{1,j},\dots,X_{n,j}]$. In particular we have $C_{n+1}=A$. The following proposition describes explicitly the Poisson structures on the algebras $C_{j}$ induced by these changes of variables.

\begin{prop}
For all $1\leq j\leq n$ we have:
\begin{enumerate}
	\item[\rm{(1)}] \textit{$C_{j+1}$ is isomorphic to an iterated Poisson-Ore extension of the form:	
\[\K[X_1]\cdots[X_j;\al_j,\de_j]_P[X_{j+1};\be_{j+1}]_P\cdots[X_n;\be_n]_P\]
by a Poisson isomorphism sending $X_{i,j+1}$ to $X_i$ for $1\leq i\leq n$.}
	\item[\rm{(2)}] \textit{For all $l\in\{j+1,\dots,n\}$, the map $\be_l$ is a Poisson derivation such that $\be_l(X_i)=\la_{li}X_i$ for all $1\leq i<l$ and we have $\be_lD_{i,k}=D_{i,k}\be_l+k\la_{li}D_{i,k}$ for all $1<i\leq j$ and all $k\geq0$.}
	\item[\rm{(3)}] Set $S_j=\{U_j^m\ |\ m\geq0\}=\{V_j^m\ |\ m\geq0\}$. We have $C_jS_j^{-1}=C_{j+1}S_j^{-1}$.
\end{enumerate}
\end{prop}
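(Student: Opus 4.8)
The plan is to prove the three assertions simultaneously by a single decreasing induction on $j$ running from $n$ down to $1$, since the statements are deeply intertwined: part (1) describes the Poisson-Ore structure on $C_{j+1}$, part (2) records the compatibility of the new derivations $\be_l$ with the higher derivations $D_{i,k}$, and part (3) is then essentially an application of Corollary \ref{equal}. For the base case $j=n$, we have $C_{n+1}=A$, which is by definition the iterated Poisson-Ore extension $\K[X_1]\cdots[X_n;\al_n,\de_n]_P$, so (1) holds with $\be_l=\al_l$ for all $l$ (the list $j+1,\dots,n$ being empty), and (2) holds by Hypothesis \ref{hyp1}(4).

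For the inductive step, suppose the proposition holds for some $j$ with $2\le j\le n$; I want to deduce it for $j-1$. By the inductive hypothesis (1), $C_{j+1}$ is (Poisson-isomorphically, via $X_{i,j+1}\mapsto X_i$) the iterated Poisson-Ore extension $\K[X_1]\cdots[X_j;\al_j,\de_j]_P[X_{j+1};\be_{j+1}]_P\cdots[X_n;\be_n]_P$, in which the last $n-j$ derivations vanish on $\de$-part. I would first invoke Lemma \ref{casn} (with the roles of the indices shifted appropriately so that $X_j$ plays the role of the ``last'' variable being bracketed) to re-write this algebra as
\[
A_{j-1}[X_{j+1};\be_{j+1}]_P\cdots[X_n;\be_n]_P[X_j;\al'_j,\de'_j]_P,
\]
which by Lemma \ref{casn}(3) again lies in $\mathcal{P}$, with $\de'_j$ extending to an iterative, locally nilpotent higher $(\eta_j,\al'_j)$-skew Poisson derivation $(D'_{j,k})$ by Lemma \ref{casn}(2). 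Since $\eta_j\ne0$ by Hypothesis \ref{hyp1}(3), Theorem \ref{iso} (applied to this reordered extension) yields a Poisson isomorphism $\theta$, and the associated subalgebra $B':=\theta\big(A'[Y;\al'_j]_P\big)$ from Corollary \ref{equal} satisfies $B'S_j^{-1}=C_{j+1}S_j^{-1}$ for $S_j$ the powers of $X_{j,j+1}$. The key point is to identify $B'$ with $C_j$: one checks that under the isomorphism $C_{j+1}\cong(\text{reordered extension})$ the generators $X_{i,j}=\sum_{k\ge0}\eta_j^{-k}D_{j,k}(X_{i,j+1})X_{j,j+1}^{-k}$ for $i<j$ are exactly the images $\theta$ of the generators, while $X_{i,j}=X_{i,j+1}$ for $i\ge j$ are unchanged; this gives (3) for index $j$, and exhibits $C_j$ as $A_{j-1}[Y_j;\al'_j]_P[X_{j+1};\be'_{j+1}]_P\cdots[X_n;\be'_n]_P$ — i.e. the $\de$ in position $j$ has been ``deleted''.

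It then remains to rearrange this latter extension back into the standard order $\K[X_1]\cdots[X_{j-1};\al_{j-1},\de_{j-1}]_P[X_j;\be_j]_P\cdots[X_n;\be_n]_P$ required by assertion (1) for index $j-1$, reading off the new Poisson derivations $\be_l$ and verifying $\be_l(X_i)=\la_{li}X_i$ for $i<l$; for $l\ge j+1$ these are essentially unchanged by Lemma \ref{casn}, while $\be_j=\al'_j$ restricted appropriately acts as $\al_j$ on lower variables, so $\be_j(X_i)=\la_{ji}X_i$. Finally, for assertion (2) at index $j-1$ I must verify the commutation relations $\be_l D_{i,k}=D_{i,k}\be_l+k\la_{li}D_{i,k}$ for $1<i\le j-1$ and all $l\in\{j,\dots,n\}$; for $l\ge j+1$ this is inherited from the inductive hypothesis after tracking how $D_{i,k}$ is affected by the change of variables (it is not — the $D_{i,k}$ for $i<j$ act on the lower algebra $A_{i-1}$ and are unchanged), and for $l=j$ one uses $\be_j=\al'_j$ together with Hypothesis \ref{hyp1}(4). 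I expect the main obstacle to be precisely this bookkeeping in the inductive step: keeping straight which derivations are altered by the reordering (Lemma \ref{casn}) versus by the deleting-derivation isomorphism (Theorem \ref{iso}), and confirming that the higher derivations $(D_{i,k})$ for $i<j$ genuinely survive these manipulations unchanged so that the hypotheses of Theorem \ref{iso} remain available at the next stage. The localisation identity (3) is comparatively painless once the identification $B'=C_j$ is in hand, being a direct transcription of Corollary \ref{equal}.
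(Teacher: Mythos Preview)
Your proposal is correct and follows essentially the same route as the paper: a decreasing induction whose base case records that $C_{n+1}=A$ already has the required iterated Poisson-Ore form, and whose inductive step reorders via Lemma~\ref{casn}, applies Theorem~\ref{iso} to delete $\de_j$, identifies the resulting subalgebra with $C_j$ (yielding (3) for index $j$ via Corollary~\ref{equal}), and then reorders back to obtain (1) and (2) for index $j-1$. The only cosmetic difference is that the paper phrases the induction as ``assume the result at rank $j+1$'' while you write ``assume at $j$, deduce at $j-1$''; the content is identical, and your explicit remark that (3) is established at index $j$ during the step (rather than assumed) is in fact cleaner than the paper's slightly ambiguous indexing.
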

	
\begin{proof}
We proceed by a decreasing induction on $j$. For $j=n+1$ we have $C_{n+1}=A$ and the result follows from Hypothesis \ref{hyp1}. We now suppose that the result is true for a rank $j+1>2$. To simplify notation we set $U_i=X_{i,j+1}$ and $V_i=X_{i,j}$ for all $1\leq i\leq n$. By the induction hypothesis we can express $C_{j+1}$ as the iterated Poisson-Ore extension:
		\[\K[U_1]\cdots[U_j;\al_j,\de_j]_P[U_{j+1};\be_{j+1}]_P\cdots[U_n;\be_n]_P\in\mathcal{P}.\]
By Lemma \ref{casn} we can write:
	\[C_{j+1}=\K[U_1]\cdots[U_{j-1};\al_{j-1},\de_{j-1}]_P[U_{j+1};\be'_{j+1}]_P\cdots[U_n;\be'_n]_P[U_j;\al'_j,\de'_j]_P,\]
where $\be'_l$ for all $j<l\leq n$ and $\al'_j$ and $\de'_j$ are defined as in assertion (1) of Lemma \ref{casn}. In particular $\de'_j$ extends to an iterative, locally nilpotent higher $(\eta_j,\al'_j)$-skew Poisson derivation $(D'_{j,k})_{k=0}^{\infty}$ on the Poisson algebra:
\[\widehat{C_{j+1}}:=\K[U_1]\cdots[U_{j-1};\al_{j-1},\de_{j-1}]_P[U_{j+1};\be'_{j+1}]_P\cdots[U_n;\be'_n]_P.\]
Therefore by applying Theorem \ref{iso} to the Poisson algebra $C_{j+1}=\widehat{C_{j+1}}[U_j;\al'_j,\de'_j]_P$ we get a Poisson algebra isomorphism $\theta$ from $\widehat{C_{j+1}}[U_j^{\pm1};\al'_j]_P$ to $\widehat{C_{j+1}}[U_j^{\pm1};\al'_j,\de'_j]_P$ sending $U_j$ to $U_j$. In particular we have $\theta(U_i)=V_i$ for all $1\leq i\leq n$ since $U_j=V_j$ and:
\begin{align*}
	\theta(U_i)=\sum_{l\geq0}\frac{1}{\eta_j^l}D'_{j,l}(U_i)U_j^{-l}
	=\left\{
\begin{array}{ll}
\sum\limits_{l\geq0}\frac{1}{\eta_j^l}D_{j,l}(U_i)U_j^{-l} &\quad i<j,\\
U_i &\quad  i<j.
\end{array}
\right.
\end{align*}
Thus we have:
\[\theta\big(\widehat{C_{j+1}}[U_j;\al'_j]_P\big)=\K[V_1]\cdots[V_{j-1};\al_{j-1},\de_{j-1}]_P[V_{j+1};\be'_{j+1}]_P\cdots[V_n;\be'_n]_P[V_j;\al'_j]_P=C_j,\]
and by Corollary \ref{equal} we get $C_jS_j^{-1}=C_{j+1}S_j^{-1}$. This proves assertion (3). 

	Since $\{V_l,V_j\}=\la_{lj}V_jV_l$ for all $j<l\leq n$ we can bring back $V_j$ in the $j$-th position:	\[C_j=\K[V_1]\cdots[V_{j-1};\al_{j-1},\de_{j-1}]_P[V_j;\be''_j]_P\cdots[V_n;\be''_n]_P,\]
where for all $j\leq l\leq n$, the map $\be''_l$ is a Poisson derivation such that $\be''_l(V_i)=\la_{li}V_i$ for all $1\leq i<l$. This proves assertion (1).

	Finally, the fact that $\be''_lD_{m,k}=D_{m,k}\be''_l+k\la_{lm}D_{m,k}$ for all $1<m<j\leq l\leq n$ and all $k\geq0$, follows directly from the equalities:
\begin{itemize}
\item $\be_lD_{m,k}=D_{m,k}\be_l+k\la_{lm}D_{m,k}$ for all $1<m\leq j<l\leq n$ and all $k\geq0$,
\item $\be_l(U_i)=\la_{li}U_i$ for all $j<l\leq n$ and all $1\leq i<l$,
\item $\al_j(U_i)=\la_{ji}U_i$ for all $1\leq i<j$,
\item $\be''_l(V_i)=\la_{li}V_i$ for all $j\leq l\leq n$ and all $1\leq i<l$.
\end{itemize}
This proves assertion (2).
\end{proof}

\begin{cor}
The algebra $\ov{A}:=C_2$ is a Poisson affine space. More precisely, by setting $T_i:=X_{i,2}$ for all $1\leq i\leq n$ and $\bo\la$ for the skew-symmetric matrix defined by $\bo\la:=(\la_{ij})\in\mathcal{M}_n(\K)$ we have:
\[\ov{A}=\K_{\bo\la}[T_1,\dots,T_n].\] 
\end{cor}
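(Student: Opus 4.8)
The plan is to specialise the preceding Proposition to $j = 1$. With $j = 1$, the prefix $\K[X_1]\cdots[X_j;\al_j,\de_j]_P$ appearing in assertion (1) degenerates to $\K[X_1]$, since $X_1$ is the initial variable and carries no $\al_1$ or $\de_1$. Hence assertion (1) supplies a Poisson isomorphism
\[
\phi\colon C_2 \;\stackrel{\cong}{\longrightarrow}\; \K[X_1][X_2;\be_2]_P\cdots[X_n;\be_n]_P, \qquad X_{i,2} = T_i \longmapsto X_i \quad (1\le i\le n).
\]
Since the target is, as a commutative ring, the polynomial algebra $\K[X_1,\dots,X_n]$ and $\phi$ maps each $T_i$ to $X_i$, this already tells us that $T_1,\dots,T_n$ are algebraically independent over $\K$, so that $\ov{A} = C_2 = \K[X_{1,2},\dots,X_{n,2}] = \K[T_1,\dots,T_n]$ is a genuine polynomial algebra in $n$ indeterminates.

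It remains to read off the Poisson bracket on the $T_i$. By assertion (2) of the Proposition with $j = 1$, for each $l \in \{2,\dots,n\}$ the map $\be_l$ is a Poisson derivation of $\K[X_1][X_2;\be_2]_P\cdots[X_{l-1};\be_{l-1}]_P$ with $\be_l(X_i) = \la_{li}X_i$ for all $1 \le i < l$; the clause of assertion (2) concerning the higher derivations $D_{i,k}$ is vacuous here, as there is no index $i$ with $1 < i \le 1$. Comparing this with the description recalled just before Definition \ref{hd}, we recognise the target of $\phi$ as precisely the Poisson affine $n$-space $\K_{\bo\la}[X_1,\dots,X_n] = \K[X_1][X_2;\al_2]_P\cdots[X_n;\al_n]_P$ (with $\al_i(X_j) = \la_{ij}X_j$). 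Transporting its defining relations $\{X_i,X_j\} = \la_{ij}X_iX_j$ back along $\phi^{-1}$ yields $\{T_i,T_j\} = \la_{ij}T_iT_j$ for all $i,j$, and therefore $\ov{A} = C_2 = \K_{\bo\la}[T_1,\dots,T_n]$.

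In short, there is no real obstacle: the corollary is an immediate unravelling of the Proposition at $j=1$, the entire substance of the argument having already been carried out in the Proposition's inductive proof. The only point one must not forget to record is that the elements $T_i = X_{i,2}$ of $\F A$ are genuinely algebraically independent — and, as noted above, this is handed to us for free by the isomorphism $\phi$ furnished by assertion (1).
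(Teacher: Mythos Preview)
Your proof is correct and is exactly the intended argument: the paper gives no separate proof for this corollary because it is simply the case $j=1$ of the preceding Proposition, and you have unpacked that case accurately, including the observation that the higher-derivation clause in assertion (2) becomes vacuous.
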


\section{Poisson deleting derivations algorithm and Poisson spectrum}
\label{ddap}

	Recall that for a Poisson algebra $B$ we denote by $\PS(B)$ its Poisson spectrum, i.e. the set of prime ideals of $B$ which are also Poisson ideals. $\PS(B)$ is endowed with the induced Zariski topology. In this section we focus on the behaviour of the Poisson spectrum of an iterated Poisson-Ore extension $A \in \mathcal{P}$ under the Poisson deleting derivation algorithm. We show that there is an embedding between $\PS(A)$ and $\PS(\ov{A})$. This is done by showing that, at each step of the algorithm there is an embedding between $\PS(C_{j+1})$ and $\PS(C_j)$ for all $2\leq j \leq n$.
	
	Throughout this section, we use the notation of Hypothesis \ref{hyp1} and we fix $2\leq j\leq n$, and set $U_i:=X_{i,j+1}$ and $V_i:=X_{i,j}$ for all $1\leq i\leq n$.

\subsection{The embedding $\varphi_j:\PS(C_{j+1})\rightarrow\PS(C_j)$}
\label{gg}

	Recall that $U_j=V_j$, and set:
\begin{alignat*}{2}
	& \mathcal{P}_j^0(C_j)=\{P\in\PS(C_j)\ |\ V_j\notin P\}, \quad\quad\quad && \mathcal{P}_j^1(C_j)=\{P\in\PS(C_j)\ |\ V_j\in P\},\\
	& \mathcal{P}_j^0(C_{j+1})=\{P\in\PS(C_{j+1})\ |\ U_j\notin P\},  && \mathcal{P}_j^1(C_{j+1})=\{P\in\PS(C_{j+1})\ |\ U_j\in P\}.
\end{alignat*}
These sets partition $\PS(C_j)$ and $\PS(C_{j+1})$. Since we have $C_jS_j^{-1}=C_{j+1}S_j^{-1}$, contraction and extension of ideals provide bijections between $\mathcal{P}_j^0(C_{j})$ and $\mathcal{P}_j^0(C_{j+1})$ (it is easy to show that the contraction or the extension of a Poisson ideal is again a Poisson ideal). More precisely we have the following result.

\begin{lem}
\label{all}
There is a homeomorphism $\varphi_j^0:\mathcal{P}_j^0(C_{j+1})\rightarrow\mathcal{P}_j^0(C_j)$ given by $\varphi_j^0(P):=PS_j^{-1}\cap C_j$ for $P\in\mathcal{P}_j^0(C_{j+1})$. Its inverse is defined by $(\varphi_j^0)^{-1}(Q):=Q S_j^{-1}\cap C_{j+1}$ for $Q\in\mathcal{P}_j^0(C_{j})$.
\end{lem}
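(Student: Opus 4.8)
The plan is to produce $\varphi_j^0$ by gluing together two instances of the classical order-preserving bijection between the primes of a commutative domain that avoid a multiplicatively closed set and the primes of the corresponding localisation, the glue being the equality $C_jS_j^{-1}=C_{j+1}S_j^{-1}$ furnished by assertion (3) of the Proposition in Section~\ref{ppdda}.

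First I would record that $C_{j+1}=\K[U_1,\dots,U_n]$ and $C_j=\K[V_1,\dots,V_n]$ are polynomial algebras in $n$ indeterminates (the $X_{i,j+1}$, resp.\ the $X_{i,j}$, being algebraically independent in $\F A$), hence integral domains, and that $S_j$ consists of the powers of the non-nilpotent element $U_j=V_j$. Consequently, for a prime $P$ of $C_{j+1}$ one has $P\cap S_j=\emptyset$ exactly when $U_j\notin P$, so that the primes of $C_{j+1}$ disjoint from $S_j$ are precisely the elements of $\{P\in\Sp(C_{j+1})\mid U_j\notin P\}$; likewise for $C_j$ with $S_j\subseteq C_j$. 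The standard localisation correspondence, given by extension and contraction of ideals, then provides mutually inverse inclusion-preserving homeomorphisms between $\{P\in\Sp(C_{j+1})\mid U_j\notin P\}$ (endowed with its subspace topology from $\Sp(C_{j+1})$) and $\Sp(C_{j+1}S_j^{-1})$, and likewise between $\{Q\in\Sp(C_j)\mid V_j\notin Q\}$ and $\Sp(C_jS_j^{-1})$. Since $C_jS_j^{-1}=C_{j+1}S_j^{-1}$ inside $\F A$, these two localisations are the same ring, so composing the first homeomorphism with the inverse of the second gives a homeomorphism $\{P\in\Sp(C_{j+1})\mid U_j\notin P\}\to\{Q\in\Sp(C_j)\mid V_j\notin Q\}$; tracking the extension--contraction formulas shows it is $P\mapsto PS_j^{-1}\cap C_j$, with inverse $Q\mapsto QS_j^{-1}\cap C_{j+1}$.

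It remains to verify that this homeomorphism restricts to a bijection $\mathcal{P}_j^0(C_{j+1})\to\mathcal{P}_j^0(C_j)$, i.e.\ that it matches Poisson ideals with Poisson ideals, and this is the one place requiring a modicum of care. Here one uses that a Poisson bracket extends uniquely under localisation (recalled before Theorem \ref{iso}), so that the identification $C_jS_j^{-1}=C_{j+1}S_j^{-1}$ is an identification of Poisson algebras, together with the observation (already announced before the lemma) that the extension of a Poisson ideal along $C_{j+1}\hookrightarrow C_{j+1}S_j^{-1}$ and the contraction of a Poisson ideal along $C_j\hookrightarrow C_jS_j^{-1}$ are again Poisson ideals --- this being immediate from $\{fs^{-1},x\}=s^{-1}\{f,x\}-fs^{-2}\{s,x\}$, which lets stability under the bracket propagate in both directions. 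Hence for $P\in\mathcal{P}_j^0(C_{j+1})$ the ideal $\varphi_j^0(P)=PS_j^{-1}\cap C_j$ is a prime Poisson ideal of $C_j$ with $V_j\notin\varphi_j^0(P)$, and symmetrically $(\varphi_j^0)^{-1}$ lands in $\mathcal{P}_j^0(C_{j+1})$; combined with the topological statement of the previous paragraph this yields the asserted homeomorphism. The bulk of the argument is thus routine bookkeeping, with the Poisson compatibility of extension/contraction being the only point to handle with attention.
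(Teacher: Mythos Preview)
Your proof is correct and follows exactly the approach the paper takes: the paper does not give a separate proof of this lemma but simply remarks, just before stating it, that since $C_jS_j^{-1}=C_{j+1}S_j^{-1}$, extension and contraction of ideals yield the desired bijections, with the preservation of the Poisson property being ``easy to show''. Your write-up is a careful expansion of precisely that argument, including the Poisson compatibility of extension and contraction that the paper leaves to the reader.
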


We note that both $\varphi_j^0$ and $(\varphi_j^0)^{-1}$ respect the inclusion of Poisson prime ideals. We now want to compare $\mathcal{P}_j^1(C_{j+1})$ and $\mathcal{P}_j^1(C_{j})$. For, we denote by $\langle U_j \rangle_P$ the smallest Poisson ideal in $C_{j+1}$ containing $U_j$ and for all $1\leq i\leq n$, we denote by $\ov{U_i}$ the image of $U_i$ in the Poisson algebra $C_{j+1}/\langle U_j\rangle_P$.

\begin{lem}
\label{g}
There is a surjective Poisson algebra homomorphism $g_j:C_j\rightarrow C_{j+1}/\langle U_j\rangle_P$ given by $g_j(V_i)=\ov{U_i}$ for all $1\leq i\leq n$.
\end{lem}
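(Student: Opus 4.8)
The plan is to build $g_j$ as the composition of two maps we already understand. Recall from the previous proposition (assertion (1)) that $C_{j+1}$ is Poisson-isomorphic, via $U_{i}\mapsto X_i$, to the iterated Poisson-Ore extension
\[
\widehat{C_{j+1}}[U_j;\al'_j,\de'_j]_P,\qquad \widehat{C_{j+1}}=\K[U_1]\cdots[U_{j-1};\al_{j-1},\de_{j-1}]_P[U_{j+1};\be'_{j+1}]_P\cdots[U_n;\be'_n]_P,
\]
after re-ordering the variables via Lemma \ref{casn}. Similarly $C_j=\widehat{C_{j+1}}[V_j;\al'_j]_P$ with $\theta(U_i)=V_i$. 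So morally $C_j$ and $C_{j+1}$ have the ``same'' truncated algebra $\widehat{C_{j+1}}$ sitting inside them, generated by the $U_i$ (resp.\ $V_i$) for $i\neq j$; the only difference is that in $C_j$ the last variable $V_j$ Poisson-commutes up to scalars (bracket $\al'_j$, no $\de'_j$-term), while in $C_{j+1}$ it carries the extra derivation $\de'_j$.

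\textbf{Key steps.}
First I would define $g_j$ on generators by $g_j(V_i):=\ov{U_i}$ and check it extends to a well-defined algebra homomorphism $C_j=\K[V_1,\dots,V_n]\to C_{j+1}/\langle U_j\rangle_P$: since $C_j$ is a polynomial algebra in the $V_i$, any assignment of the generators extends uniquely to an algebra homomorphism, so well-definedness is free. Surjectivity is then immediate because the classes $\ov{U_1},\dots,\ov{U_n}$ generate $C_{j+1}/\langle U_j\rangle_P$ (as $U_1,\dots,U_n$ generate $C_{j+1}$). Next, the real content: verifying that $g_j$ is a \emph{Poisson} homomorphism, i.e.\ $g_j(\{V_i,V_k\})=\{\ov{U_i},\ov{U_k}\}$ for all $i,k$. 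Because Poisson brackets and the homomorphism are bilinear and obey the Leibniz rule, it suffices to check this on the generating pairs $V_i,V_k$. I would split into cases according to whether the indices lie below $j$, equal $j$, or above $j$, using the explicit Poisson-Ore description of $C_j$ just recalled.

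\textbf{The case analysis.}
For $i,k$ both $\neq j$: in $C_j$ the bracket $\{V_i,V_k\}$ is computed inside the sub-Poisson-algebra generated by the $V_i$'s with $i\neq j$, which under $\theta$ is precisely $\theta(\widehat{C_{j+1}})$; meanwhile in $C_{j+1}/\langle U_j\rangle_P$ the bracket $\{\ov{U_i},\ov{U_k}\}$ is the image of the bracket $\{U_i,U_k\}$ computed in $\widehat{C_{j+1}}$. Since $\theta$ restricted to $\widehat{C_{j+1}}$ is (modulo the identifications) the map $U_i\mapsto V_i$ and is a Poisson isomorphism onto its image, these agree after one further quotient — I need to observe that $\langle U_j\rangle_P$ is a Poisson ideal and that reducing mod it is compatible. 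The delicate case is $k=j$: I must show $g_j(\{V_i,V_j\})=\{\ov{U_i},\ov{U_j}\}$. But $\ov{U_j}=0$ in $C_{j+1}/\langle U_j\rangle_P$, so the right-hand side is $0$. On the left, $\{V_i,V_j\}=-\{V_j,V_i\}$, and in $C_j=\widehat{C_{j+1}}[V_j;\al'_j]_P$ we have $\{V_j,V_i\}=\al'_j(V_i)V_j$; applying $g_j$ gives $g_j(\al'_j(V_i))\,\ov{U_j}=0$ since $\ov{U_j}=0$. So this case reduces to checking that $\langle U_j\rangle_P=U_j\widehat{C_{j+1}}$ actually contains enough — more precisely that everything one gets by bracketing $U_j$ lands in the ideal generated by $U_j$; that is exactly the statement that $\langle U_j\rangle_P$ is closed under the Poisson bracket with all of $C_{j+1}$, i.e.\ that $U_j C_{j+1}$ is already a Poisson ideal. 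Here the identity $\{U_j, a\}=\al'_j(a)U_j+\de'_j(a)$ might seem to obstruct this, but $\de'_j(a)$ lies in $\widehat{C_{j+1}}$ and need \emph{not} be a multiple of $U_j$; so $\langle U_j\rangle_P$ is genuinely larger than $U_j C_{j+1}$ in general. The cleanest route, which I expect to be the main obstacle to write carefully, is therefore to identify $\langle U_j\rangle_P$ explicitly — e.g.\ as the ideal generated by $U_j$ together with the elements $D'_{j,k}(a)$ for $a$ in generators, equivalently as $\ker$ of the natural surjection $C_{j+1}\twoheadrightarrow \widehat{C_{j+1}}/\langle\de'_j\text{-stuff}\rangle$ — and then match $C_{j+1}/\langle U_j\rangle_P$ with a quotient of $\widehat{C_{j+1}}$ onto which $\theta$ descends, recovering $g_j$. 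Once $\langle U_j\rangle_P$ is pinned down, all the bracket identities become routine verifications on generators using the Poisson-Ore formulas and Hypothesis \ref{hyp1}(4) / assertion (2) of the preceding proposition.
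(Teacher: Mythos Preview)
Your approach is essentially the paper's: factor $g_j$ through the algebra isomorphism $\Psi:C_j\to C_{j+1}$, $V_i\mapsto U_i$, followed by the Poisson quotient map $\pi$, and then verify the Poisson identity on pairs of generators by a case split. The paper splits according to whether the larger index $k$ satisfies $k\geq j$ or $k<j$; your split (one index equal to $j$ versus both $\neq j$) amounts to the same computation.

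Where you go astray is the final paragraph. After correctly computing that $g_j(\{V_i,V_j\})=g_j(\al'_j(V_i))\,\ov{U_j}=0$ and that the right-hand side $\{\ov{U_i},\ov{U_j}\}=\{\ov{U_i},0\}=0$, you are \emph{done} with that case: nothing further is required. The equality $\{\ov{U_i},\ov{U_j}\}=0$ holds because $\langle U_j\rangle_P$ is, by definition, a Poisson ideal, so $C_{j+1}/\langle U_j\rangle_P$ is a Poisson algebra and $\ov{U_j}=0$ in it. You never need to know what $\langle U_j\rangle_P$ looks like, nor whether $(U_j)$ is already Poisson-closed. Your proposed ``cleanest route'' of identifying $\langle U_j\rangle_P$ explicitly is both unnecessary and, in general, impossible: Remark~\ref{ppss} gives an example where even the ideal $\langle U_j,\ \de_j(U_i)\,:\,i<j\rangle$ fails to be a Poisson ideal, so no simple closed-form description of $\langle U_j\rangle_P$ is available. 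The paper accordingly makes no attempt to describe it; the proof of the lemma is four lines of generator computations and nothing more.
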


\begin{proof}
The map $g_j$ is the composition of the canonical quotient map $\pi:C_{j+1}\rightarrow C_{j+1}/\langle U_j\rangle_P$ and the algebra isomorphism $\Psi:C_j\rightarrow C_{j+1}$ defined by $\Psi(V_i)=U_i$ for all $1\leq i\leq n$. Thus clearly $g_j=\pi\circ\Psi$ is a surjective algebra homomorphism. Note that $\pi$ is a Poisson algebra homomorphism whereas $\Psi$ is not in general, so we cannot conclude directly. We show that $g_j(\{V_k,V_l\})=\{g_j(V_k),g_j(V_l)\}$ for all $1\leq l<k\leq n$. First if $k\geq j$ we have:
	\[g_j(\{V_k,V_l\})=g_j(\la_{kl}V_kV_l)=\la_{kl}\ov{U_k}\ov{U_l}=\{\ov{U_k},\ov{U_l}\}=\{g_j(V_k),g_j(V_j)\}.\]
(Note that when $k=j$ we have $\ov{U_k}=0$). If $k<j$ we have $\Psi(\de_k(V_l))=\de_k(U_l)$ and thus:	
\begin{align*}	
				g_j(\{V_k,V_l\})=g_j\big(\la_{kl}V_kV_l+\de_{k}(V_l)\big)&=\la_{kl}\ov{U_k}\ov{U_l}+g_j\big(\de_{k}(V_l)\big)\\
				&=\la_{kl}\ov{U_k}\ov{U_l}+\ov{\de_{k}(U_l)}=\{\ov{U_k},\ov{U_l}\}=\{g_j(V_k),g_j(V_l)\}.
\end{align*}
\end{proof}

Set $N_j:=\ker(g_j)$. There is a homeomorphism $\varphi_j^1$ from $\mathcal{P}_j^1(C_{j+1})$ to $\{P\in\PS(C_j)\ |\ N_j\subseteq P\}$ defined by $\varphi_j^1(P):=g_j^{-1}(P/\langle U_j\rangle_P)$ for $P\in\mathcal{P}_j^1(C_{j+1})$. Since $V_j=U_j\in N_j$ we have $\{P\in\PS(C_j)\ |\ N_j\subseteq P\}\subseteq\mathcal{P}_j^1(C_{j})$ and:

\begin{lem}
\label{bee}
There is an increasing and injective map $\varphi_j^1:\mathcal{P}_j^1(C_{j+1})\rightarrow\mathcal{P}_j^1(C_{j})$ defined by $\varphi_j^1(P)=g_j^{-1}(P/\langle U_j\rangle_P)$ for $P\in\mathcal{P}_j^1(C_{j+1})$, which induces a homeomorphism on its image.
\end{lem}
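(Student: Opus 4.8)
The plan is to exhibit $\varphi_j^1$ as a composite of maps whose order-theoretic and topological behaviour is standard, and then to read off the three assertions of the lemma from those of the factors.

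First I would use Lemma \ref{g}: $g_j\colon C_j\to C_{j+1}/\langle U_j\rangle_P$ is a \emph{Poisson} algebra homomorphism, so $N_j=\ker(g_j)$ is a Poisson ideal of $C_j$ and $g_j$ factors as a Poisson algebra isomorphism $\ov{g_j}\colon C_j/N_j\xrightarrow{\ \cong\ }C_{j+1}/\langle U_j\rangle_P$ followed by the quotient map. Consequently $\varphi_j^1$ decomposes as
\[
\mathcal{P}_j^1(C_{j+1})\xrightarrow{\ \alpha\ }\PS\!\big(C_{j+1}/\langle U_j\rangle_P\big)\xrightarrow{\ \ov{g_j}^{*}\ }\PS(C_j/N_j)\xrightarrow{\ \beta\ }\{P\in\PS(C_j)\mid N_j\subseteq P\},
\]
where $\alpha(P)=P/\langle U_j\rangle_P$ (well defined since any Poisson ideal $P\ni U_j$ contains $\langle U_j\rangle_P$), where $\ov{g_j}^{*}$ is preimage along the isomorphism $\ov{g_j}$, and where $\beta$ is the inverse of $P\mapsto P/N_j$. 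Each of $\alpha$ and $\beta$ is an instance of the usual correspondence between the (Poisson) spectrum of a quotient and the (Poisson) primes lying above the ideal one quotients by: these are order-preserving bijections that are homeomorphisms for the induced Zariski topologies. The middle map $\ov{g_j}^{*}$ is an order-preserving homeomorphism because $\ov{g_j}$ is a Poisson algebra isomorphism. Hence $\varphi_j^1$ is injective, and both $\varphi_j^1$ and its inverse on the image preserve inclusions; that is, $\varphi_j^1$ is an order embedding, which in particular yields the word ``increasing'' in the statement.

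Next I would pin down the image and check that it sits inside $\mathcal{P}_j^1(C_j)$. By construction the image of $\varphi_j^1$ is exactly $\{P\in\PS(C_j)\mid N_j\subseteq P\}$. Since $g_j(V_j)=\ov{U_j}=0$ in $C_{j+1}/\langle U_j\rangle_P$ we have $V_j\in N_j$, so $\{P\in\PS(C_j)\mid N_j\subseteq P\}\subseteq\mathcal{P}_j^1(C_j)$, which is the inclusion already observed before the statement; thus $\varphi_j^1$ genuinely takes values in $\mathcal{P}_j^1(C_j)$. For the topological conclusion I would invoke transitivity of the subspace topology: $\varphi_j^1$ is a homeomorphism onto $\{P\in\PS(C_j)\mid N_j\subseteq P\}$ with the topology induced from $\PS(C_j)$, and this set carries the same topology whether regarded as a subspace of $\PS(C_j)$ or of $\mathcal{P}_j^1(C_j)$; hence $\varphi_j^1\colon\mathcal{P}_j^1(C_{j+1})\to\mathcal{P}_j^1(C_j)$ induces a homeomorphism onto its image.

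There is no serious obstacle here; the argument is essentially bookkeeping around the correspondence theorem. The one point that must not be skipped is that $g_j$ is a Poisson (not merely ring) homomorphism, so that $N_j$ is a Poisson ideal and preimages of Poisson primes are Poisson primes — this is exactly the content of Lemma \ref{g}, and it is why the plain algebra isomorphism $\Psi$ appearing in that proof cannot be used directly. The only other thing to be slightly careful about is that $\alpha$ and $\beta$ are homeomorphisms and not merely order isomorphisms, i.e. that closed sets correspond to closed sets in the induced Zariski topologies; this follows from the commutative-algebra fact that for an ideal $I$ of a commutative ring $R$ the canonical bijection $\Sp(R/I)\to V(I)$ is a homeomorphism, restricted to the Poisson primes.
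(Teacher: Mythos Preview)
Your proposal is correct and follows essentially the same approach as the paper: the paper's justification (given in the paragraph immediately preceding the lemma rather than as a separate proof) is precisely that $g_j$ is a surjective Poisson homomorphism, so $\varphi_j^1$ is a homeomorphism onto $\{P\in\PS(C_j)\mid N_j\subseteq P\}$, and this set lies in $\mathcal{P}_j^1(C_j)$ because $V_j\in N_j$. Your argument simply unpacks this by explicitly factoring through $\PS(C_{j+1}/\langle U_j\rangle_P)$ and $\PS(C_j/N_j)$, which is a perfectly acceptable (and more careful) elaboration of the same idea.
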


We can now define a map $\varphi_j:\PS(C_{j+1})\rightarrow\PS(C_j)$ by setting:
	\[\varphi_j(P)=\left\{
		\begin{array}{ll}
		\varphi_j^0(P)\mbox{ if }P\in \mathcal{P}_j^0(C_{j+1}),\\
		\varphi_j^1(P)\mbox{ if }P\in \mathcal{P}_j^1(C_{j+1}).
		\end{array}
		\right.
\]

As a direct consequence of Lemmas \ref{all} and \ref{bee} we get the following result.

\begin{prop}
\label{inj}
The map $\varphi_j:\PS(C_{j+1})\rightarrow\PS(C_j)$ is injective. For $\varepsilon\in\{0,1\}$, the map $\varphi_j$ induces a homeomorphism from $\mathcal{P}_j^\varepsilon(C_{j+1})$ to $\varphi_j\big(\mathcal{P}_j^\varepsilon(C_{j+1})\big)$ which is a closed subset of $\mathcal{P}_j^\varepsilon(C_{j})$.
\end{prop}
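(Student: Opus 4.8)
The plan is to deduce Proposition \ref{inj} from Lemmas \ref{all} and \ref{bee} by bookkeeping where the two branches of $\varphi_j$ send points. Recall that $\varphi_j$ coincides with the homeomorphism $\varphi_j^0$ of Lemma \ref{all} on $\mathcal{P}_j^0(C_{j+1})$ and with the injection $\varphi_j^1$ of Lemma \ref{bee} on $\mathcal{P}_j^1(C_{j+1})$, and that $\{\mathcal{P}_j^0(C_{j+1}),\mathcal{P}_j^1(C_{j+1})\}$ is a partition of $\PS(C_{j+1})$.

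For injectivity, I would first record that each branch is injective on its block: $\varphi_j^0$ by Lemma \ref{all} and $\varphi_j^1$ by Lemma \ref{bee}. I would then observe that the images of the two branches are disjoint in $\PS(C_j)$: every $\varphi_j^0(P)$ avoids $V_j$ (it lies in $\mathcal{P}_j^0(C_j)$ by Lemma \ref{all}), whereas every $\varphi_j^1(P)$ contains $V_j$, since $V_j=U_j\in N_j=\ker(g_j)\subseteq\varphi_j^1(P)$. A map which is injective on each block of a partition of its domain and carries distinct blocks into disjoint sets is injective, whence $\varphi_j$ is injective.

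It then remains to prove, for each $\varepsilon\in\{0,1\}$, that $\varphi_j$ restricts to a homeomorphism from $\mathcal{P}_j^\varepsilon(C_{j+1})$ onto its image and that this image is closed in $\mathcal{P}_j^\varepsilon(C_j)$. For $\varepsilon=0$ this is immediate: Lemma \ref{all} gives a homeomorphism $\mathcal{P}_j^0(C_{j+1})\to\mathcal{P}_j^0(C_j)$, so the image is all of $\mathcal{P}_j^0(C_j)$, which is trivially closed in itself. For $\varepsilon=1$, Lemma \ref{bee} already provides a homeomorphism from $\mathcal{P}_j^1(C_{j+1})$ onto $\varphi_j^1\big(\mathcal{P}_j^1(C_{j+1})\big)$, so the only point left to check is closedness of this image in $\mathcal{P}_j^1(C_j)$. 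Here I would use that, by the construction of $\varphi_j^1$ through the surjection $g_j$ of Lemma \ref{g}, its image equals $\{P\in\PS(C_j)\mid N_j\subseteq P\}$; this is the intersection of $\PS(C_j)$ with the set of primes of $C_j$ containing $N_j$, a Zariski-closed subset of $\Sp(C_j)$, hence it is closed in $\PS(C_j)$ for the induced topology, and since it is contained in $\mathcal{P}_j^1(C_j)$ it is a fortiori closed in $\mathcal{P}_j^1(C_j)$.

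I do not expect a genuine obstacle: the proposition is essentially a reassembly of the two lemmas. The one step requiring a moment's care is the identification of $\varphi_j^1\big(\mathcal{P}_j^1(C_{j+1})\big)$ with the set of Poisson primes of $C_j$ containing the kernel $N_j$ of $g_j$ — which is precisely how $\varphi_j^1$ was defined in Lemma \ref{bee} — after which recognising that set as the trace on $\PS(C_j)$ of a vanishing locus makes the closedness claim transparent.
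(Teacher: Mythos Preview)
Your proposal is correct and follows the same approach as the paper, which simply states that the proposition is a direct consequence of Lemmas \ref{all} and \ref{bee}; you have merely spelled out the routine bookkeeping (disjointness of the images of the two branches, and the identification of $\varphi_j^1\big(\mathcal{P}_j^1(C_{j+1})\big)$ with the closed set $\{P\in\PS(C_j)\mid N_j\subseteq P\}$) that the paper leaves implicit.
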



\subsection{The canonical partition of $\PS(A)$}
\label{injec}

\begin{Def}
We set $\varphi:=\varphi_2\circ\cdots\circ\varphi_n$. This is an injective map from $\PS(C_{n+1})=\PS(A)$ to $\PS(C_{2})=\PS(\ov{A})$ and we refer to it as the \emph{canonical embedding}.
\end{Def}

Let $W:=\mathscr{P}([\mspace{-2 mu} [1,n] \mspace{-2 mu} ])$ denote the powerset of $[\mspace{-2 mu} [1,n] \mspace{-2 mu} ]$. For $w\in W$, we set:
	\[\PS_w(\ov{A}):=\big\{Q\in\PS(\ov{A})\ |\ Q\cap\{T_1,\dots,T_n\}=\{T_i\ |\ i\in w\}\big\},\]
where we recall that the $T_i$ are the generators of the Poisson affine space $\ov{A}$. Note that these sets form a partition of $\PS(\ov{A})$. For all $w\in W$ we set:
$$\PS_w(A):=\varphi^{-1}\big(\PS_w(\ov{A})\big),$$
 and $W'_P$ for the set of $w$ such that $\PS_w(A)\neq\emptyset$, i.e.
 $$W'_P:=\{ w \in W ~|~ \PS_w(A)\neq\emptyset \}.$$
  This family forms a partition of $\PS(A)$:
	\[\PS(A)=\displaystyle\bigsqcup\limits_{w\in W'_P}\PS_w(A)\quad \mbox{ and } \quad  |W'_P|\leq|W|=2^n.\]

\begin{Def}
This partition of $\PS(A)$ will be called the {\em canonical partition}, the elements of $W'_P$ will be called the {\em Cauchon diagrams associated to A}, or Cauchon diagrams for short. Finally, for $w\in W'_P$ the set $\PS_w(A)$ is called the {\em stratum} associated to $w$.
\end{Def}

Note that the set $W'_P$ depends on the expression of $A$ as an iterated Poisson-Ore extension.

\subsection{A membership criterion for $\Imm(\varphi)$}
\label{critim}

	The following results help us to understand whether or not a given Poisson prime ideal of $\ov{A}$ belongs to the image of the canonical embedding. This will be useful to understand better the canonical partition and when dealing with examples. We start this section with a membership criterion for $\Imm(\varphi_j)$. Recall that $N_j=\ker(g_j)$ was defined in Section \ref{gg}.
\begin{lem}
\label{eqq}
Let $Q\in\PS(C_j)$. Then:
\begin{align*}
		Q\in\Imm(\varphi_j) &\Leftrightarrow \big(\text{ either }U_j=V_j\notin Q \text{, or } N_j\subseteq Q\big).
\end{align*}
\end{lem}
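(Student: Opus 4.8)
The plan is to unwind the definition of $\varphi_j$ from Section \ref{gg}, treating separately the two pieces $\varphi_j^0$ and $\varphi_j^1$, and to match each one against the two alternatives on the right-hand side. Recall that $\mathrm{Im}(\varphi_j)=\varphi_j\big(\mathcal{P}_j^0(C_{j+1})\big)\sqcup\varphi_j\big(\mathcal{P}_j^1(C_{j+1})\big)$ since $\varphi_j$ is injective and the domain is partitioned by whether $U_j\in P$. So I would show that $\varphi_j\big(\mathcal{P}_j^0(C_{j+1})\big)=\{Q\in\PS(C_j)\mid V_j\notin Q\}=\mathcal{P}_j^0(C_j)$ and that $\varphi_j\big(\mathcal{P}_j^1(C_{j+1})\big)=\{Q\in\PS(C_j)\mid N_j\subseteq Q\}$; the lemma then follows by taking the union, once one checks these two alternatives are not mutually exclusive in a problematic way (they can overlap, but the stated criterion is a disjunction, so that is harmless).

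For the $\varepsilon=0$ part: by Lemma \ref{all}, $\varphi_j^0$ is a bijection from $\mathcal{P}_j^0(C_{j+1})$ onto $\mathcal{P}_j^0(C_j)$, and $\mathcal{P}_j^0(C_j)$ is by definition exactly $\{Q\in\PS(C_j)\mid V_j\notin Q\}$. So $Q\in\mathrm{Im}(\varphi_j^0)$ iff $V_j\notin Q$, which is the first alternative. For the $\varepsilon=1$ part: by Lemma \ref{bee} (and the discussion preceding it), $\varphi_j^1$ maps $\mathcal{P}_j^1(C_{j+1})$ bijectively onto $\{P\in\PS(C_j)\mid N_j\subseteq P\}$, via $\varphi_j^1(P)=g_j^{-1}(P/\langle U_j\rangle_P)$; surjectivity onto that target set is precisely the content that for any $Q\supseteq N_j$ the ideal $g_j(Q)$ is a Poisson prime ideal of $C_{j+1}/\langle U_j\rangle_P$ whose preimage $\langle U_j\rangle_P \subseteq g_j^{-1}(g_j(Q))=Q$ pulls back along $\pi$ to a member of $\mathcal{P}_j^1(C_{j+1})$. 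Hence $Q\in\mathrm{Im}(\varphi_j^1)$ iff $N_j\subseteq Q$, which is the second alternative.

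Putting the two together: $Q\in\mathrm{Im}(\varphi_j)$ iff $Q\in\mathrm{Im}(\varphi_j^0)$ or $Q\in\mathrm{Im}(\varphi_j^1)$, i.e. iff $V_j\notin Q$ or $N_j\subseteq Q$, which is exactly the claimed equivalence (using $U_j=V_j$). The only genuine subtlety — the step I expect to be the main obstacle — is justifying that the image of $\varphi_j^1$ is \emph{all} of $\{Q\in\PS(C_j)\mid N_j\subseteq Q\}$, rather than merely contained in it: one needs that passing an ideal $Q\supseteq N_j$ through $g_j$ and back is the identity and yields something in $\mathcal{P}_j^1(C_{j+1})$. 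This is really a restatement of the correspondence theorem for the surjection $g_j$ (Lemma \ref{g}) combined with the fact that $\langle U_j\rangle_P$-containing Poisson primes of $C_{j+1}$ correspond to Poisson primes of $C_{j+1}/\langle U_j\rangle_P$; since $g_j$ has kernel $N_j$, Poisson primes of $C_{j+1}/\langle U_j\rangle_P$ correspond in turn to Poisson primes of $C_j$ containing $N_j$. All of this is already packaged into the statements of Lemmas \ref{g} and \ref{bee}, so the proof amounts to citing those lemmas and reading off the two images; I would keep it short.
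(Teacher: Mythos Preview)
Your proposal is correct and follows essentially the same approach as the paper's proof, which simply observes that $\varphi_j^0$ is a bijection onto $\mathcal{P}_j^0(C_j)$ (Lemma \ref{all}) and that $\varphi_j^1$ is a bijection onto $\{Q\in\PS(C_j)\mid N_j\subseteq Q\}$ (the homeomorphism stated just before Lemma \ref{bee}). One small remark: the two alternatives in fact cannot overlap, since $V_j\in N_j$ forces $V_j\in Q$ whenever $N_j\subseteq Q$; this does not affect your argument, but you need not worry about the overlap you flagged.
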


\begin{proof}
This is clear since the map $\varphi^0_j$ is a bijection from $\mathcal{P}_j^0(C_{j+1})$ to $\mathcal{P}_j^0(C_{j})$ and the map $\varphi^1_j$ is a bijection from $\mathcal{P}_j^1(C_{j+1})$ to $\{Q\in\PS(C_j)\ |\ N_j\subseteq Q\}$.
\end{proof}

Set $f_1:=\id_{\text{\tPS}(\ov{A})}$. For all $2\leq j\leq n$ we define a map $f_j:\PS(C_{j+1})\rightarrow\PS(\ov{A})$ by setting $f_j:=f_{j-1}\circ\varphi_j$. Note that each $f_j$ is injective. We deduce from Lemma \ref{eqq} the following membership criterion for $\Imm(\varphi)$.

\begin{prop}
Let $Q\in\PS(\ov{A})$. The following are equivalent:
\begin{itemize}
	\item $Q\in\Imm(\varphi)$,
	\item for all $2\leq j\leq n$ we have $Q\in\Imm(f_{j-1})$ and\\
either $X_{j,j}=X_{j,j+1}\notin f_{j-1}^{-1}(Q)$, or $N_{j}\subseteq f_{j-1}^{-1}(Q)$.
\end{itemize}
\end{prop}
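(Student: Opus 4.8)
The plan is to deduce this membership criterion for $\Imm(\varphi)$ directly from the factorisation $\varphi=\varphi_2\circ\cdots\circ\varphi_n$ together with the per-step criterion of Lemma \ref{eqq}. Recall that $f_j=f_{j-1}\circ\varphi_j=\varphi_2\circ\cdots\circ\varphi_j$ as a map $\PS(C_{j+1})\to\PS(\ov A)$, with $f_1=\id_{\tPS(\ov A)}$ and $f_n=\varphi$. The key observation is that since each $f_{j-1}$ is injective, for $Q\in\Imm(f_{j-1})$ the element $f_{j-1}^{-1}(Q)$ is a well-defined Poisson prime ideal of $C_j$, and the condition "$Q\in\Imm(f_j)$" is equivalent to "$Q\in\Imm(f_{j-1})$ and $f_{j-1}^{-1}(Q)\in\Imm(\varphi_j)$". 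Applying Lemma \ref{eqq} to the ideal $f_{j-1}^{-1}(Q)\in\PS(C_j)$ (and recalling $U_j=V_j=X_{j,j}=X_{j,j+1}$), the latter condition rewrites as: either $X_{j,j+1}\notin f_{j-1}^{-1}(Q)$, or $N_j\subseteq f_{j-1}^{-1}(Q)$.

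First I would make precise the equivalence $Q\in\Imm(f_j)\iff\bigl(Q\in\Imm(f_{j-1})\text{ and }f_{j-1}^{-1}(Q)\in\Imm(\varphi_j)\bigr)$. The forward direction: if $Q=f_j(P)=f_{j-1}(\varphi_j(P))$ then $Q\in\Imm(f_{j-1})$ and, by injectivity of $f_{j-1}$, $f_{j-1}^{-1}(Q)=\varphi_j(P)\in\Imm(\varphi_j)$. The backward direction: if $f_{j-1}^{-1}(Q)=\varphi_j(P)$ for some $P\in\PS(C_{j+1})$, then $f_j(P)=f_{j-1}(\varphi_j(P))=f_{j-1}(f_{j-1}^{-1}(Q))=Q$. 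Then I would combine this with Lemma \ref{eqq} to get, for each $j$, the equivalence $Q\in\Imm(f_j)\iff\bigl(Q\in\Imm(f_{j-1})\text{ and }(X_{j,j+1}\notin f_{j-1}^{-1}(Q)\text{ or }N_j\subseteq f_{j-1}^{-1}(Q))\bigr)$.

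The proof then proceeds by a finite induction on $j$ from $2$ to $n$, unwinding the recursion. Since $f_n=\varphi$, we have $Q\in\Imm(\varphi)\iff Q\in\Imm(f_n)$, and applying the one-step equivalence repeatedly shows that $Q\in\Imm(f_n)$ holds if and only if $Q\in\Imm(f_1)=\PS(\ov A)$ (which is automatic) and, for every $2\leq j\leq n$, $Q\in\Imm(f_{j-1})$ together with the disjunction on $f_{j-1}^{-1}(Q)$. One small point to spell out is that the conditions for different values of $j$ are not independent — the condition at stage $j$ presupposes $Q\in\Imm(f_{j-1})$, which is exactly the conjunction of the conditions at stages $2,\dots,j-1$ — so the statement is correctly phrased as "for all $2\leq j\leq n$: $Q\in\Imm(f_{j-1})$ and (either $X_{j,j}=X_{j,j+1}\notin f_{j-1}^{-1}(Q)$ or $N_j\subseteq f_{j-1}^{-1}(Q)$)", with no circularity.

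There is no serious obstacle here; the main point requiring care is purely bookkeeping: keeping track of which algebra each ideal lives in ($Q\in\PS(\ov A)$, whereas $f_{j-1}^{-1}(Q)\in\PS(C_j)$) and making sure the injectivity of $f_{j-1}$ (which follows from injectivity of each $\varphi_j$, Proposition \ref{inj}) is invoked so that $f_{j-1}^{-1}(Q)$ is unambiguous. The identification $U_j=V_j=X_{j,j+1}=X_{j,j}$ from the definition of the algorithm is what lets us restate Lemma \ref{eqq} in terms of the variables $X_{j,j+1}$ appearing in the statement.
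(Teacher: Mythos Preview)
Your proposal is correct and follows exactly the approach the paper intends: the paper states this proposition immediately after Lemma \ref{eqq} with the single sentence ``We deduce from Lemma \ref{eqq} the following membership criterion for $\Imm(\varphi)$'' and gives no further argument. You have simply unpacked that deduction carefully, using the factorisation $f_j=f_{j-1}\circ\varphi_j$ and the injectivity of each $f_{j-1}$, which is precisely what the paper leaves implicit.
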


\begin{rem}
\label{ppss}
To understand $N_j$ it is enough to understand $\langle U_j\rangle_P$ since $N_j=\Psi^{-1}(\langle U_j\rangle_P)$, where the algebra isomorphism $\Psi:C_j\rightarrow C_{j+1}$ is defined by $\Psi(V_i)=U_i$ for all $1\leq i\leq n$ (see proof of Lemma \ref{g}). As $\{U_j,U_i\}=\la_{ji}U_jU_i+\de_j(U_i)$ for all $i\in[\mspace{-2 mu} [1,j-1] \mspace{-2 mu} ]$, we deduce that:
	\[\langle U_j,\de_j(U_i)\ |\ i\in[\mspace{-2 mu} [1,j-1] \mspace{-2 mu} ]\rangle\subseteq\langle U_j\rangle_P.\]
By minimality of $\langle U_j\rangle_P$, the reverse inclusion will be satisfied if the left hand side is a Poisson ideal. However this is not always the case as the following example demonstrates. Let $A$ be the iterated Poisson-Ore extension $A:=\C[X][Y;\be,\Delta]_P[Z;\al,\de]_P$, where $\be:=-X\partial_X$, $\al:=X\partial_X-Y\partial_Y$, $\Delta:=\partial_X$ and $\de:=Y^2\partial_X$, so that:
\begin{align*}
&\{Y,X\}=-XY+1,\\
&\{Z,X\}=XZ+Y^2,\\
&\{Z,Y\}=-YZ.
\end{align*}
We have $\Delta\be-\be\Delta=-\Delta$ and $\de\al-\al\de=\de$. Moreover since $\Delta$ and $\de$ are locally nilpotent, assertion (3') is satisfied and the algebra $A$ belongs to $\mathcal{P}$. However the ideal $\langle Z,Y^2\rangle$ is not a Poisson ideal.
\end{rem}

\subsection{Topological and algebraic properties of the canonical embedding}
\label{taprop}

	In this section we investigate topological properties of the canonical embedding. We start with some results that will be used in this section as well as latter on.

\begin{lem}
\label{iff}
Let $l\in\{j\dots,n\}$, $P\in\PS(C_{j+1})$ and $Q:=\varphi_j(P)\in\PS(C_j)$. Then we have:
	\[U_l\in P\quad\Leftrightarrow\quad V_l\in Q.\]
\end{lem}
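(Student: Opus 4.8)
The plan is to prove Lemma~\ref{iff} by splitting into the two cases of the partition, $P \in \mathcal{P}_j^0(C_{j+1})$ and $P \in \mathcal{P}_j^1(C_{j+1})$, and handling each with the explicit descriptions of $\varphi_j^0$ and $\varphi_j^1$ given in Section~\ref{gg}. Throughout I would use that $U_l = V_l$ whenever $l \geq j$ (which holds here since $l \in \{j,\dots,n\}$), so that the two elements compared are literally the same element of $\F A = \F C_{j+1} = \F C_j$, living inside the common localisation $C_{j+1}S_j^{-1} = C_j S_j^{-1}$.

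First I would treat the case $P \in \mathcal{P}_j^0(C_{j+1})$, so $Q = \varphi_j^0(P) = PS_j^{-1} \cap C_j$ and, by Lemma~\ref{all}, $P = QS_j^{-1} \cap C_{j+1}$. Here the equivalence is essentially the statement that contraction from $C_{j+1}S_j^{-1}$ detects membership of $U_l$: if $U_l \in P$ then $U_l = V_l \in PS_j^{-1} \cap C_j = Q$ since $V_l \in C_j$; conversely if $V_l \in Q$ then $V_l = U_l \in QS_j^{-1} \cap C_{j+1} = P$ since $U_l \in C_{j+1}$. (One should note that $l \geq j$ forces $U_l \neq U_j = V_j$ when $l > j$, and when $l = j$ the statement $U_j \in P$ is vacuously false since $P \in \mathcal{P}_j^0(C_{j+1})$, consistently with $V_j \notin Q$; so the case $l=j$ needs only a line.) This case is routine once one unwinds the definitions.

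The case $P \in \mathcal{P}_j^1(C_{j+1})$ is the one I expect to be the main obstacle. Here $Q = \varphi_j^1(P) = g_j^{-1}(P/\langle U_j\rangle_P)$, where $g_j : C_j \to C_{j+1}/\langle U_j \rangle_P$ is the surjective Poisson homomorphism from Lemma~\ref{g} with $g_j(V_i) = \overline{U_i}$. Thus $V_l \in Q$ iff $g_j(V_l) = \overline{U_l} \in P/\langle U_j\rangle_P$, i.e. iff $U_l + \langle U_j\rangle_P \in P/\langle U_j\rangle_P$, i.e. (since $\langle U_j\rangle_P \subseteq P$, as $U_j \in P$) iff $U_l \in P$. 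So in this case the equivalence is immediate from the construction of $g_j$ and $\varphi_j^1$, provided one is careful that $\langle U_j \rangle_P \subseteq P$ — which holds precisely because $P \in \mathcal{P}_j^1(C_{j+1})$ means $U_j \in P$ and $P$ is a Poisson ideal, hence contains the smallest Poisson ideal $\langle U_j\rangle_P$ generated by $U_j$.

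Combining the two cases gives the lemma for $\varphi_j$ with the fixed $j$ of the section. If a statement about the full canonical embedding $\varphi$ is wanted later, it would follow by iterating this lemma down the chain $C_{n+1} \to C_n \to \cdots \to C_2$, using at step $j$ that the intermediate ideal $f_{j-1}^{-1}(Q)$ lies in $\Imm(\varphi_j)$; but for the statement as displayed only the single-step argument above is needed. The only genuine subtlety is bookkeeping: keeping straight that $U_l$ and $V_l$ coincide for $l \geq j$, that $\Psi(V_i) = U_i$ is merely an algebra (not Poisson) isomorphism, and that the Poisson-ideal generated by $U_j$ — rather than the ordinary ideal — is what appears, so that the reverse inclusion $\langle U_j\rangle_P \subseteq P$ is automatic from $P$ being a Poisson ideal.
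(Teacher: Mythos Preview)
Your proposal is correct and follows essentially the same approach as the paper's proof: both split according to whether $P\in\mathcal{P}_j^0(C_{j+1})$ or $P\in\mathcal{P}_j^1(C_{j+1})$, use the contraction/extension description of $\varphi_j^0$ in the first case, and unwind $g_j^{-1}(P/\langle U_j\rangle_P)$ in the second. The only cosmetic difference is that the paper isolates the case $l=j$ at the outset (appealing to Proposition~\ref{inj}), whereas you absorb it into the two cases; your treatment is equally valid.
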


\begin{proof}
	If $l=j$, then $(U_l\in P)\Leftrightarrow\big(P\in\mathcal{P}_j^1(C_{j+1})\big)$ and $(V_l\in Q)\Leftrightarrow\big(Q\in\mathcal{P}_j^1(C_{j})\big)$, and the result is given by Proposition \ref{inj}. We distinguish between two cases when $l>j$. First, if $P\in\mathcal{P}_j^0(C_{j+1})$, then we have:
	\[U_l\in P\quad \Rightarrow\quad U_l\in PS_j^{-1}\quad \Rightarrow\quad V_l=U_l\in C_j\cap PS_j^{-1}=Q,\]
and
	\[V_l\in Q\quad \Rightarrow\quad V_l\in QS_j^{-1}\quad \Rightarrow\quad U_l=V_l\in C_{j+1}\cap QS_j^{-1}=P.\]
Next, if $P\in\mathcal{P}_j^1(C_{j+1})$, then we have:
	\[U_l\in P\quad \Leftrightarrow\quad \ov{U_l}\in\frac{P}{\langle U_j\rangle_P}\quad \Leftrightarrow\quad g_j(V_l)\in\frac{P}{\langle U_j\rangle_P}\quad \Leftrightarrow\quad V_l\in g_j^{-1}\Big(\frac{P}{\langle U_j\rangle_P}\Big)=Q.\]
\end{proof}

 	For $Q\in\Imm(\varphi)$, we set $P_j:=f_{j-1}^{-1}(Q)\in\PS(C_j)$ for all $2\leq j\leq n+1$. In particular, note that $Q=P_2$.

\begin{cor}
\label{belongs}
Let $i\in\{1,\dots,n\}$ and $Q\in\Imm(\varphi)$. We have:
	\[T_i=X_{i,2}\in P_2 \Leftrightarrow X_{i,i+1}\in P_{i+1}.\]
\end{cor}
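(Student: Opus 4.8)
The plan is to prove Corollary~\ref{belongs} by a descending induction on $j$ from $n+1$ down to $2$, using Lemma~\ref{iff} as the engine. Recall that for $Q\in\Imm(\varphi)$ we have $P_2=Q$ and, by definition of the $f_j$, the ideals satisfy $P_j=\varphi_j(P_{j+1})$ for all $2\leq j\leq n$; indeed $P_j=f_{j-1}^{-1}(Q)$, $P_{j+1}=f_j^{-1}(Q)$ and $f_j=f_{j-1}\circ\varphi_j$, so $P_j=\varphi_j(P_{j+1})$. Thus we are in precisely the situation of Lemma~\ref{iff}: with the notational conventions of that lemma, $U_l=X_{l,j+1}$ and $V_l=X_{l,j}$, and the lemma asserts $X_{l,j+1}\in P_{j+1}\Leftrightarrow X_{l,j}\in P_j$ for every $l\in\{j,\dots,n\}$.

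First I would fix $i\in\{1,\dots,n\}$ and apply Lemma~\ref{iff} repeatedly with $l=i$: for each $j$ with $2\leq j\leq i$ we have $i\in\{j,\dots,n\}$, so the lemma gives $X_{i,j+1}\in P_{j+1}\Leftrightarrow X_{i,j}\in P_j$. Chaining these equivalences for $j=i,i-1,\dots,2$ yields
\[
X_{i,i+1}\in P_{i+1}\ \Leftrightarrow\ X_{i,i}\in P_i\ \Leftrightarrow\ \cdots\ \Leftrightarrow\ X_{i,2}\in P_2,
\]
and since $X_{i,2}=T_i$ and $P_2=Q$ is not directly needed, the statement $T_i=X_{i,2}\in P_2\Leftrightarrow X_{i,i+1}\in P_{i+1}$ follows. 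The only point to check carefully is that the index range in Lemma~\ref{iff} is exactly what is required: the lemma is stated for $l\in\{j,\dots,n\}$, and we only ever invoke it with $l=i\geq j$ (since $j\leq i$ in each step), so the hypotheses are met at every step of the chain.

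I expect the main (minor) obstacle to be purely bookkeeping: matching the indices in the two notational systems — the ``local'' names $U_l,V_l$ of Section~\ref{taprop} versus the ``global'' names $X_{l,j+1},X_{l,j}$ — and making sure that the identification $P_j=\varphi_j(P_{j+1})$ is invoked correctly so that Lemma~\ref{iff} applies to the pair $(P_{j+1},P_j)$ at each stage. Once those identifications are in place, the proof is just the telescoping chain of equivalences described above, with no further computation needed.
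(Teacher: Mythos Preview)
Your proposal is correct and follows exactly the approach of the paper, which states only that the result ``follows by induction from Lemma~\ref{iff}''. You have simply spelled out the telescoping chain of equivalences in full, including the identification $P_j=\varphi_j(P_{j+1})$ needed to apply Lemma~\ref{iff} at each step.
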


\begin{proof}
This follows by induction from Lemma \ref{iff}.
\end{proof}

 Let $1\leq j\leq n$ and $w\in W$. Set $X_w:=f_j^{-1}(\PS_w(\ov{A}))\subset\PS(C_{j+1})$. When $j\geq2$, we also set $Y_w:=f_{j-1}^{-1}(\PS_w(\ov{A}))\subset\PS(C_{j})$, so that $X_w=\varphi_j^{-1}(Y_w)$ since $f_j=f_{j-1}\circ\varphi$. Note that the sets $X_w$ and $Y_w$ can be empty.
	
\begin{lem}
\label{w}
For $j\leq l\leq n$ we have:
\begin{itemize}
	\item If $l\notin w$, then $U_l\notin P$ for all $P\in X_w$,
	\item If $l\in w$, then $U_l\in P$ for all $P\in X_w$.
\end{itemize}
\end{lem}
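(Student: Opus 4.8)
The statement to prove is about the set $X_w = f_j^{-1}(\PS_w(\ov{A})) \subset \PS(C_{j+1})$: for an index $l$ with $j \le l \le n$, whether $U_l = X_{l,j+1}$ lies in $P$ or not is controlled by membership of $l$ in $w$. The natural approach is a decreasing induction on $j$, running from $j = n$ down to $j = 1$, exactly mirroring the way the maps $f_j$ and the algebra $C_{j+1}$ are built. For the base case $j = n$ we have $C_{n+1} = A$ and $f_n = \varphi_n$; here the only relevant index is $l = n$, and by Lemma \ref{iff} applied with $l = j = n$ we have $U_n \in P \Leftrightarrow V_n \in \varphi_n(P)$, and one checks directly from the definition of $\PS_w(\ov{A})$ together with Corollary \ref{belongs} (the case $i = n$) that $V_n \in \varphi_n(P)$ iff $n \in w$. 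Actually, to keep the argument uniform I would not separate the base case but instead phrase the induction step so that it covers it.

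For the inductive step, suppose the claim holds at rank $j+1$, so that for all $j+1 \le l \le n$ and all $P' \in X_w^{(j+1)} := f_{j+1}^{-1}(\PS_w(\ov A)) \subset \PS(C_{j+2})$ we know $U_l' = X_{l,j+2} \in P'$ iff $l \in w$. Now take $P \in X_w = X_w^{(j)} \subset \PS(C_{j+1})$ and $j \le l \le n$. Write $Q := \varphi_j(P) \in \PS(C_j)$, and note $Q \in Y_w := f_{j-1}^{-1}(\PS_w(\ov A))$. Wait — the indexing here needs care: the inductive hypothesis is a statement about $f_{j-1}$ and $C_j$, i.e. about $\varphi$'s composed from $\varphi_2$ up to $\varphi_{j-1}$. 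So more precisely I run the induction so that the hypothesis at rank $j$ gives the conclusion for $Y_w = f_{j-1}^{-1}(\PS_w(\ov A)) \subset \PS(C_j)$ and indices $l$ with $j-1 \le l \le n$; then I want to deduce it for $X_w = f_j^{-1}(\PS_w(\ov A)) \subset \PS(C_{j+1})$ and indices $l$ with $j \le l \le n$. Since $X_w = \varphi_j^{-1}(Y_w)$, given $P \in X_w$ we set $Q = \varphi_j(P) \in Y_w$. For $j \le l \le n$ (so in particular $j \le l$, i.e. $l$ is in the range covered by Lemma \ref{iff} for the pair $(C_{j+1}, C_j)$), Lemma \ref{iff} gives $U_l \in P \Leftrightarrow V_l \in Q$. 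By the inductive hypothesis applied to $Q \in Y_w$ (and noting $l$ lies in the range $j-1 \le l \le n$), we have $V_l = X_{l,j} \in Q \Leftrightarrow l \in w$. Chaining the two equivalences yields $U_l \in P \Leftrightarrow l \in w$, which is exactly the assertion of the lemma (the two bullet points being the two directions of this single biconditional).

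The genuinely load-bearing ingredient is Lemma \ref{iff}, which already packages the compatibility of the partition-by-support with each single step $\varphi_j$; once that is in hand the present lemma is a bookkeeping induction, so the main thing to be careful about is the index ranges — making sure that at each step the index $l$ under consideration falls in the range $\{j, \dots, n\}$ for which Lemma \ref{iff} applies to $\varphi_j$, and that the starting point of the induction (essentially $l = j$ handled by Corollary \ref{belongs}) correctly anchors it. I would also remark that $U_l = V_l$ for $l \ge j$ by the definition of the change of variables, which is implicitly what makes the statement of Lemma \ref{iff} (and hence this one) meaningful, though strictly speaking it is not needed for the logical argument.
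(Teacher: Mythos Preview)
Your core argument is correct and matches the paper's proof exactly: an induction (increasing in $j$) with inductive step handled by Lemma \ref{iff}, transferring the condition from $P \in X_w$ to $Q = \varphi_j(P) \in Y_w$ and invoking the hypothesis there. The one thing to clean up is the base case: after you (rightly) abandon the decreasing direction, the anchor of the induction is $j=1$, where $f_1 = \id$, $X_w = \PS_w(\ov{A})$, and the claim is immediate from the definition of $\PS_w(\ov{A})$ --- Corollary \ref{belongs} is not what anchors it, and your remark about ``$l=j$ handled by Corollary \ref{belongs}'' should be replaced by this.
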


\begin{proof}
Note that since $l\geq j$ we have $U_l=X_{l,k}=T_l$ for all $2\leq k\leq j+1$. If $j=1$, we have $X_w=\PS_w(\ov{A})$ and the result comes from the definition of $\PS_w(\ov{A})$.

	Assume that $j\geq2$ and the result shown for $j-1$. First assume that $l\notin w$ and let $P\in X_w$. If $U_l\in P$ then $V_l\in Q=\varphi_j(P)\in Y_w$ by Lemma \ref{iff}. This contradicts the induction hypothesis, thus $U_l\notin P$. Next assume that $l\in w$ and let $P\in X_w$. If $U_l\notin P$ then $V_l\notin Q=\varphi_j(P)\in Y_w$ by Lemma \ref{iff}. This contradicts the induction hypothesis, thus $U_l\in P$.
\end{proof}

\begin{lem}
\label{lem8}
The set $f_j(X_w)$ is a closed subset of $\PS_w(\ov{A})$, and $f_j$ induces (by restriction) a homeomorphism from $X_w$ to $f_j(X_w)$.
\end{lem}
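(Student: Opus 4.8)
The plan is to argue by decreasing induction on $j$, exactly as in the proof of Lemma \ref{w}. For the base case $j=1$ we have $f_1=\id$ and $X_w=\PS_w(\ov{A})$, so both assertions are trivial. For the inductive step, assume the result holds for $j-1$, so that $f_{j-1}$ restricts to a homeomorphism from $Y_w$ onto the closed subset $f_{j-1}(Y_w)$ of $\PS_w(\ov{A})$, and recall $X_w=\varphi_j^{-1}(Y_w)$ with $f_j=f_{j-1}\circ\varphi_j$. Since $f_j(X_w)=f_{j-1}(\varphi_j(X_w))$ and $f_{j-1}$ restricted to $Y_w$ is a homeomorphism onto its (closed) image, it suffices to prove that $\varphi_j(X_w)$ is a closed subset of $Y_w$ and that $\varphi_j$ restricts to a homeomorphism $X_w\to\varphi_j(X_w)$; the composition with the inductive homeomorphism then finishes the step.

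The key point is to identify which piece of the partition $X_w$ lies in. By Lemma \ref{w} applied at level $j$ (note $j\in\{j,\dots,n\}$ is allowed), either $j\notin w$, in which case $U_j\notin P$ for every $P\in X_w$, so $X_w\subseteq\mathcal{P}_j^0(C_{j+1})$; or $j\in w$, in which case $U_j\in P$ for every $P\in X_w$, so $X_w\subseteq\mathcal{P}_j^1(C_{j+1})$. In either case $X_w$ sits inside a single $\mathcal{P}_j^\varepsilon(C_{j+1})$, and by Proposition \ref{inj} the map $\varphi_j$ already restricts to a homeomorphism from that whole set $\mathcal{P}_j^\varepsilon(C_{j+1})$ onto $\varphi_j(\mathcal{P}_j^\varepsilon(C_{j+1}))$, which is a closed subset of $\mathcal{P}_j^\varepsilon(C_{j})$. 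Hence $\varphi_j|_{X_w}$ is automatically a homeomorphism onto its image, and we only need to check that $\varphi_j(X_w)$ is closed in $Y_w$.

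For closedness: using the homeomorphism $\varphi_j|_{\mathcal{P}_j^\varepsilon(C_{j+1})}$ and the fact that $X_w=\varphi_j^{-1}(Y_w)\cap\mathcal{P}_j^\varepsilon(C_{j+1})$, one gets $\varphi_j(X_w)=Y_w\cap\varphi_j(\mathcal{P}_j^\varepsilon(C_{j+1}))$, so it is enough to know that $\varphi_j(\mathcal{P}_j^\varepsilon(C_{j+1}))$ is closed in $\mathcal{P}_j^\varepsilon(C_{j})$ and that $Y_w\subseteq\mathcal{P}_j^\varepsilon(C_{j})$. The first is exactly the last clause of Proposition \ref{inj}. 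For the second, apply Lemma \ref{w} at level $j-1$ (or Lemma \ref{iff}): if $j\notin w$ then $V_j\notin Q$ for all $Q\in Y_w$, so $Y_w\subseteq\mathcal{P}_j^0(C_j)$, matching $\varepsilon=0$; if $j\in w$ then $V_j\in Q$ for all $Q\in Y_w$, so $Y_w\subseteq\mathcal{P}_j^1(C_j)$, matching $\varepsilon=1$. Therefore $\varphi_j(X_w)$ is the intersection of $Y_w$ with a closed subset of the ambient $\mathcal{P}_j^\varepsilon(C_j)$, hence closed in $Y_w$; pushing forward by the inductive homeomorphism $f_{j-1}|_{Y_w}$ (which carries closed sets of $Y_w$ to closed sets of $f_{j-1}(Y_w)$, itself closed in $\PS_w(\ov{A})$) shows $f_j(X_w)$ is closed in $\PS_w(\ov{A})$, and the composite restriction is a homeomorphism. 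The only mild subtlety — and the step I would be most careful about — is the bookkeeping that $X_w$ really does land in the correct $\mathcal{P}_j^\varepsilon$ block and that $Y_w$ lands in the matching block on the $C_j$ side; once that alignment is nailed down via Lemma \ref{w}, everything else is a formal consequence of Proposition \ref{inj} and the inductive hypothesis.
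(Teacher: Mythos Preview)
Your proposal is correct and follows essentially the same argument as the paper: induction starting at $j=1$, using Lemma \ref{w} at levels $j$ and $j-1$ to place $X_w$ and $Y_w$ in matching blocks $\mathcal{P}_j^\varepsilon$, then invoking Proposition \ref{inj} to obtain $\varphi_j(X_w)=Y_w\cap\varphi_j(\mathcal{P}_j^\varepsilon(C_{j+1}))$ closed in $Y_w$, and finally composing with the inductive homeomorphism $f_{j-1}|_{Y_w}$. The only slip is that you call this a ``decreasing induction'' when in fact (as your own base case $j=1$ and inductive hypothesis for $j-1$ make clear) it is an increasing induction on $j$, just as in the paper.
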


\begin{proof}
The result is trivial if $j=1$. Assume that $j\geq 2$ and that the result is shown for $j-1$. By Lemma \ref{w} (applied to $l=j$ for $j$ and $j-1$) we have:
\begin{itemize}
	\item $(j\notin w)\quad\Rightarrow\quad (X_w\subset\mathcal{P}_j^0(C_{j+1})\mbox{ and }Y_w\subset\mathcal{P}_j^0(C_{j}))$,
	\item $(j\in w)\quad\Rightarrow\quad (X_w\subset\mathcal{P}_j^1(C_{j+1})\mbox{ and }Y_w\subset\mathcal{P}_j^1(C_{j}))$. 
\end{itemize}
	Therefore we have $\varphi_j(X_w)=Y_w\cap Z$ where $Z=\varphi_j(\mathcal{P}_j^\varepsilon(C_{j+1}))$ with $\varepsilon\in\{0,1\}$. By Proposition \ref{inj}, $Y_w\cap Z$ is a closed subset of $Y_w$, and $\varphi_j$ induces a homeomorphism from $X_w$ to $Y_w\cap Z$. By the induction hypothesis $f_{j-1}$ induces a homeomorphism from $Y_w$ to $f_{j-1}(Y_w)$ which is a closed subset of $\PS_w(\ov{A})$.
	
	Thus $f_{j-1}(Y_w\cap Z)$ is a closed subset of $f_{j-1}(Y_w)$ (as the image of a closed subset by a homeomorphism), and so is a closed subset of $\PS_w(\ov{A})$. Since $f_j(X_w)=f_{j-1}\circ\varphi_j(X_w)=f_{j-1}(Y_w\cap Z)$, the first assertion is proved.
	
	The map $f_j:X_w\rightarrow f_j(X_w)=f_{j-1}(Y_w\cap Z)$ is the composition of the two maps $\varphi_j:X_w\rightarrow Y_w\cap Z$ and $f_{j-1}:Y_w\cap Z\rightarrow f_{j-1}(Y_w\cap Z)$ which are both homeomorphisms.
\end{proof}

	When $j=n$ we have $f_j=\varphi$ and $X_w=\PS_w(A)$, for all $w\in W$. We deduce the following result.

\begin{thm}
\label{topo}
Let $\varphi:\PS(A)\rightarrow\PS(\ov{A})$ be the canonical embedding and $w\in W'_P$. Then $\varphi(\PS_w(A))$ is a (non empty) closed subset of $\PS_w(\ov{A})$, and $\varphi$ induces (by restriction) a homeomorphism from $\PS_w(A)$ to $\varphi(\PS_w(A))$.
\end{thm}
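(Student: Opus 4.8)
The plan is to deduce Theorem \ref{topo} from Lemma \ref{lem8} by simply specialising the parameter $j$ to its maximal value $j=n$. Recall that $\varphi = \varphi_2 \circ \cdots \circ \varphi_n$ and that $f_j := f_{j-1} \circ \varphi_j$ with $f_1 := \id$, so that $f_n = f_1 \circ \varphi_2 \circ \cdots \circ \varphi_n = \varphi$. Moreover, by definition $X_w = f_n^{-1}(\PS_w(\ov{A})) = \varphi^{-1}(\PS_w(\ov{A})) = \PS_w(A)$. Thus the statement of Lemma \ref{lem8} with $j=n$ reads exactly: $\varphi(\PS_w(A))$ is a closed subset of $\PS_w(\ov{A})$, and $\varphi$ restricts to a homeomorphism from $\PS_w(A)$ onto $\varphi(\PS_w(A))$. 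This is almost verbatim the conclusion of Theorem \ref{topo}.

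The only remaining point is the parenthetical non-emptiness assertion: when $w \in W'_P$, the stratum $\PS_w(A)$ is non-empty, hence so is its image $\varphi(\PS_w(A))$. But this is immediate from the definition of $W'_P$ given in Section \ref{injec}, namely $W'_P = \{w \in W \mid \PS_w(A) \neq \emptyset\}$; so for $w \in W'_P$ we have $\PS_w(A) \neq \emptyset$ by fiat, and since $\varphi$ is injective (it is the composition of the injective maps $\varphi_j$, as noted just before and after Proposition \ref{inj}), its image $\varphi(\PS_w(A))$ is a non-empty subset of $\PS_w(\ov{A})$.

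I do not anticipate any real obstacle here: the theorem is essentially a repackaging of Lemma \ref{lem8} in the special case $j=n$, combined with the bookkeeping identities $f_n = \varphi$ and $X_w = \PS_w(A)$. The substantive work — the inductive argument tracking how each $\varphi_j$ interacts with the sets $\mathcal{P}_j^\varepsilon$, the use of Lemma \ref{w} to place $X_w$ inside the correct piece $\mathcal{P}_j^\varepsilon(C_{j+1})$, and the appeal to Proposition \ref{inj} for closedness of $\varphi_j(\mathcal{P}_j^\varepsilon(C_{j+1}))$ — has already been carried out in the proof of Lemma \ref{lem8}. So the proof I would write is one line: apply Lemma \ref{lem8} with $j = n$, using that $f_n = \varphi$ and $X_w = \PS_w(A)$, together with the definition of $W'_P$ for non-emptiness.
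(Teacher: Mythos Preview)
Your proposal is correct and matches the paper's own proof essentially verbatim: the paper simply observes that when $j=n$ one has $f_j=\varphi$ and $X_w=\PS_w(A)$, and then states Theorem~\ref{topo} as an immediate consequence of Lemma~\ref{lem8}. Your additional remark on non-emptiness (from the definition of $W'_P$) is a minor clarification the paper leaves implicit.
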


In particular we note that the map $\varphi$ respect the inclusion of Poisson prime ideals within the same strata. In a lot of examples (when the Poisson algebra considered is supporting a suitable torus action for instance) the inclusion of the previous theorem is actually an equality:  
\[\varphi(\PS_w(A))=\PS_w(\ov{A}).\] 
However this is not true in general as the following example demonstrates. 

\begin{ex}
\label{ex4}
Assume that $\car\K=0$. Let $B=\K_{\bo\la}[X_1,X_2,X_3]$ be the Poisson affine space where:
\[\bo\la=\begin{pmatrix}
0&0&-1\\
0&0&-1\\
1&1&0\\
\end{pmatrix}.
\]
Observe that $\al:=-X_1\frac{\partial}{\partial X_1}-X_2\frac{\partial}{\partial X_2}$ is a Poisson derivation of $B$ and $\de:=(X_1+X_2)\frac{\partial}{\partial X_3}$ a Poisson $\al$-derivation of $B$. Thus we can form the Poisson-Ore extension $A=B[X_4;\al,\de]_P$. Note that $\de$ is locally nilpotent and that we have $\de\al=\al\de+\de$. Thus $A\in\mathcal{P}$ by Remark \ref{caraczero}. In particular the derivation $\de$ uniquely extends to an iterative, locally nilpotent higher $(1,\al)$-skew Poisson derivation $(D_i)$ defined by $D_i=\frac{\de^i}{i!}$ for all $i\geq0$. Therefore we can apply the deleting derivations algorithm (actually the deleting derivation homomorphism is enough here since there is only one step in the algorithm). 

	The Poisson algebra $\ov{A}$ is the Poisson affine space $\K_{\bo{\la'}}[T_1,T_2,T_3,T_4]$ where:
\[\bo{\la'}=\begin{pmatrix}
0&0&-1&1\\
0&0&-1&1\\
1&1&0&0\\
-1&-1&0&0\\
\end{pmatrix},
\]
and where $T_1=X_1$, $T_2=X_2$, $T_3=X_3+(X_1+X_2)X_4^{-1}$ and $T_4=X_4$. The canonical embedding is the map $\varphi$ from $\PS(A)$ to $\PS(\ov{A})$ defined by:
\begin{align*}
	P\longmapsto  \left\{
	\begin{array}{ll}
		PS^{-1}\cap\ov{A}\qquad& X_4\notin P\\
		g^{-1}(P/\langle X_4\rangle_P) & X_4\in P,
	\end{array}
\right.
\end{align*}
where $S$ is the multiplicative set of $A$ generated by $X_4$, and where:
\begin{align*}
	g\ :\ \ \ \ov{A} \ \ \  &\longrightarrow \frac{A}{\langle X_4\rangle_P}\\
					T_i	\ \ \	 &\longmapsto X_i+\langle X_4\rangle_P \quad \text{for}\quad i=1,\dots,4.
\end{align*}

	Firstly we show that $\{4\}\in W_P'\subseteq W=\mathscr{P}([\mspace{-2 mu} [1,4] \mspace{-2 mu} ])$. Set $P:=\langle X_4\rangle_P=\langle X_4,X_1+X_2\rangle$. It easy to see that $P\in\PS(A)$. Since $X_4\in P$, Lemma \ref{d=0} gives us a Poisson algebra isomorphism $A/P\cong\ov{A}/\varphi(P)$ sending $X_i+P$ to $T_i+\varphi(P)$ for $1\leq i\leq 4$. Therefore we have $T_4\in\varphi(P)$ and $T_1,T_2,T_3\notin\varphi(P)$. Hence $\varphi(P)\in\PS_{\{4\}}(\ov{A})$ and $\{4\}\in W'_P$. 

	Secondly, since $\{4\}\in W'_P$, Theorem \ref{topo} tells us that the set $\varphi\big(\PS_{\{4\}}(A)\big)$ is a non-empty closed subset of $\PS_{\{4\}}(\ov{A})$. We will show that this inclusion is strict. For $Q\in\PS_{\{4\}}(A)$ we have $T_4\in\varphi(Q)\in\PS_{\{4\}}(\ov{A})$, so $X_4\in Q$. But then $Q\in\mathcal{P}^1(A)$ and thus $\langle T_4,T_1+T_2\rangle\subseteq \varphi(Q)$. Hence we have the following inclusion:
	\[\varphi\big(\PS_{\{4\}}(A)\big)\subseteq\{P\in\PS_{\{4\}}(\ov{A})\ |\ T_4\in P,\ T_1+T_2\in P\}\subseteq \PS_{\{4\}}(\ov{A}).\]

But it is clear that $\langle T_4\rangle\in\PS_{\{4\}}(\ov{A})$. Thus:
	\[\varphi\big(\PS_{\{4\}}(A)\big)\varsubsetneq\PS_{\{4\}}(\ov{A}).\]
\end{ex}

To conclude this section we prove the following criterion for a Poisson prime ideal to belong to the image of the canonical embedding.

\begin{prop}
\label{wPQ}
Let $w\in W_P'$, $P\in\PS_w(A)$ and $Q\in\PS_w(\ov{A})$ such that $\varphi(P)\subseteq Q$. Then $Q\in\Imm(\varphi)$.
\end{prop}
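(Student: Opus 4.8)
The plan is to run the same kind of decreasing induction on $j$ that underlies the construction of $\varphi=\varphi_2\circ\cdots\circ\varphi_n$, reducing the statement to the analogous one-step statement about each $\varphi_j$. So first I would isolate the following claim: for $2\le j\le n$, if $P\in\PS(C_{j+1})$, $Q'\in\PS(C_j)$, $\varphi_j(P)\subseteq Q'$, and $P$ and $Q'$ lie "over the same $w$" in the sense that $U_l\in P\Leftrightarrow V_l\in Q'$ for all $l\ge j$ (equivalently, both lie in $\mathcal{P}_j^\varepsilon$ for the same $\varepsilon$), then $Q'\in\Imm(\varphi_j)$. Granting this claim, the proposition follows: writing $P_j:=f_{j-1}^{-1}(Q)$ is not quite available since we do not yet know $Q\in\Imm(\varphi)$, so instead I would argue downward, setting $Q_2:=Q$ and $P_2:=\varphi(P)$, and at each step use the one-step claim to pull $Q_j$ back to some $Q_{j+1}\in\PS(C_{j+1})$ with $\varphi_j(Q_{j+1})=Q_j$ and $\varphi_j(P_{j+1})=P_j\subseteq Q_j$; since $\varphi_j$ is injective and induces a homeomorphism (hence an order-isomorphism onto its image, by the remark after Lemma~\ref{all} and Lemma~\ref{bee}) on each $\mathcal{P}_j^\varepsilon$, one gets $P_{j+1}\subseteq Q_{j+1}$, and the membership hypotheses propagate upward via Lemma~\ref{iff} and Lemma~\ref{w}. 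After $n-1$ steps this exhibits $Q$ as $\varphi$ of an element of $\PS(A)$.

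For the one-step claim, I would use the membership criterion of Lemma~\ref{eqq}: $Q'\in\Imm(\varphi_j)$ iff either $V_j\notin Q'$ or $N_j\subseteq Q'$. Split on whether $j\in w$. If $j\notin w$, then by Lemma~\ref{w} we have $U_j\notin P$, so $P\in\mathcal{P}_j^0(C_{j+1})$, hence $\varphi_j(P)=PS_j^{-1}\cap C_j\in\mathcal{P}_j^0(C_j)$ does not contain $V_j$; since $\varphi_j(P)\subseteq Q'$... that alone does not force $V_j\notin Q'$, so here I would instead invoke the hypothesis that $P$ and $Q'$ are "over the same $w$" — concretely, the hypothesis of the proposition says $P\in\PS_w(A)$ and $Q\in\PS_w(\ov A)$, and Lemma~\ref{w} applied along the tower shows $V_j\notin Q_j=Q'$ when $j\notin w$. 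Then Lemma~\ref{eqq} immediately gives $Q'\in\Imm(\varphi_j)$. If $j\in w$, then $V_j\in Q'$ and we must show $N_j\subseteq Q'$. Here I would use that $U_j\in P$, so $\langle U_j\rangle_P\subseteq P$, hence (transporting through the isomorphism $\Psi:C_j\to C_{j+1}$ of Lemma~\ref{g}, which satisfies $N_j=\Psi^{-1}(\langle U_j\rangle_P)$) $N_j=\Psi^{-1}(\langle U_j\rangle_P)\subseteq\Psi^{-1}(P)$. The point is then to compare $\Psi^{-1}(P)$ with $Q'=\varphi_j(P)$... but these are genuinely different in general, so this crude inclusion is not enough.

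The honest route for the case $j\in w$, and the step I expect to be the main obstacle, is this: since $j\in w$ we have $P\in\mathcal{P}_j^1(C_{j+1})$, and $\varphi_j$ restricted to $\mathcal{P}_j^1(C_{j+1})$ is, by Lemma~\ref{bee}, an order-isomorphism onto $\{R\in\PS(C_j)\mid N_j\subseteq R\}$. In particular $\varphi_j(P)\supseteq N_j$. Now $Q'\supseteq\varphi_j(P)\supseteq N_j$, so $Q'$ automatically lies in the target set $\{R\mid N_j\subseteq R\}$, and Lemma~\ref{eqq} gives $Q'\in\Imm(\varphi_j)$ directly — the subtlety is purely that one must notice $\varphi_j(P)$ already contains $N_j$ for $P\in\mathcal{P}_j^1$, which is exactly the content of Lemma~\ref{bee} since $\varphi_j^1(P)=g_j^{-1}(P/\langle U_j\rangle_P)\supseteq g_j^{-1}(0)=N_j$. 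So in fact the one-step claim is nearly formal given Lemmas~\ref{eqq}, \ref{w}, \ref{bee}; the real work is the bookkeeping of the downward induction: one must check at each stage that the pulled-back ideal $Q_{j+1}$ still lies in the same stratum $\mathcal{P}_j^\varepsilon$ as $P_{j+1}$ (so that the order-isomorphism property applies at the next step), and that the hypothesis $\varphi(P)\subseteq Q$ unwinds correctly to $P_{j+1}\subseteq Q_{j+1}$ at every level, using injectivity and the fact (Proposition~\ref{inj}) that $\varphi_j$ is a homeomorphism onto its image on each piece. I would present the induction with $P_j,Q_j$ defined simultaneously and verify the stratum-compatibility invariant $U_l\in P_j\Leftrightarrow V_l\in Q_j$ for $l\ge j-1$ as part of the inductive hypothesis, feeding Lemma~\ref{iff} and Lemma~\ref{w} at each turn.
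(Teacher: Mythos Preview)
Your proposal is correct and follows essentially the same approach as the paper: an induction through the tower $C_j$, splitting at each step on whether $j\in w$, using Lemma~\ref{w} to conclude $V_j\notin Q'$ when $j\notin w$, and using $N_j\subseteq\varphi_j^1(P)\subseteq Q'$ when $j\in w$, in both cases invoking the membership criterion of Lemma~\ref{eqq}. The only difference is organizational: the paper runs an \emph{increasing} induction on $j$ directly on the statement ``$Q\in\Imm(f_j)$'', so that $f_{j-1}^{-1}(Q)$ is available at each step from the inductive hypothesis --- this sidesteps the bookkeeping you flag as the ``main obstacle'' (tracking the $Q_j$ and their stratum-compatibility by hand), since once $Q\in\Imm(f_{j-1})$ is known, Lemma~\ref{lem8} gives $f_{j-1}^{-1}(\varphi(P))\subseteq f_{j-1}^{-1}(Q)$ for free and Lemma~\ref{w} applies directly to $f_{j-1}^{-1}(Q)\in Y_w$.
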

\begin{proof}
We prove by induction that $Q\in\Imm(f_j)$ for all $1\leq j \leq n$. When $j=1$ the result is trivial since $f_1$ is the identity on $\PS(\ov{A})$. Suppose that $Q\in\Imm(f_{j-1})$ for some $2\leq j\leq n$. We have to show that $f_{j-1}^{-1}(Q)\in\Imm(\varphi_j)$ since $f_j=f_{j-1}\circ\varphi_j$. Firstly we remark that $\varphi(P)\subseteq Q$ implies that $f_{j-1}^{-1}(\varphi(P))\subseteq f_{j-1}^{-1}(Q)$ by Lemma \ref{lem8} (with $j$ replaced by $j-1$). We now distinguish between two cases. 

	Assume that $U_j\notin f_{j}^{-1}(\varphi(P))$. Then by Corollary \ref{belongs} we have $T_j\notin\varphi(P)$ and 
	so $j\notin w$. But then by Lemma \ref{w} we have $U_j \notin f_{j-1}^{-1}(Q)$ and thus $f_{j-1}^{-1}(Q)\in\Imm(\varphi_j)$ by Lemma \ref{eqq}.
	
	Assume that $U_j\in f_{j}^{-1}(\varphi(P))$. Then: \[N_j\subseteq \varphi_j\big(f_{j}^{-1}(\varphi(P))\big)=f_{j-1}^{-1}(\varphi(P))\subseteq f_{j-1}^{-1}(Q),\]
and Lemma \ref{eqq} shows that $f_{j-1}^{-1}(Q)\in\Imm(\varphi_j)$.
	
	This concludes the induction. The result follows by taking $j=n$.
\end{proof}

\section{Poisson prime quotients of $A$ and $\ov{A}$}
\label{sectionCj}

	In this section we study the behaviour of the Poisson prime quotients of a Poisson algebra $A \in \mathcal{P}$ under the deleting derivations algorithm. We continue using notation from Hypothesis \ref{hyp1} and Section \ref{ddap}. 
	
		Fix $2\leq j\leq n$, let $P\in\PS(C_{j+1})$ and set $Q:=\varphi_j(P)\in\PS(C_j)$. As usual, to simplify notation we set $U_i:= X_{i,j+1}$ and $V_i:= X_{i,j}$ for all $i$. We also set $D:=C_{j+1}/P$ and $E:=C_j/Q$. Finally, we set $d_i:=U_i+P$ and $e_i:=V_i+Q$ for all $1\leq i\leq n$.
	
\begin{lem}
\label{d=0}
If $d_j=0$, then there is a Poisson algebra isomorphism between $E$ and $D$ sending $e_i$ to $d_i$ for all $1\leq i\leq n$.
\end{lem}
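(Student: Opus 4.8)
Recall the situation: $j$ is fixed with $2 \le j \le n$, $P \in \PS(C_{j+1})$, $Q = \varphi_j(P) \in \PS(C_j)$, and $d_j = U_j + P = 0$, i.e. $U_j \in P$, so $P \in \mathcal{P}_j^1(C_{j+1})$. By definition $\varphi_j$ restricted to $\mathcal{P}_j^1(C_{j+1})$ is $\varphi_j^1(P) = g_j^{-1}(P/\langle U_j\rangle_P)$, where $g_j : C_j \to C_{j+1}/\langle U_j\rangle_P$ is the surjective Poisson homomorphism of Lemma~\ref{g} with $g_j(V_i) = \overline{U_i}$. So $Q = g_j^{-1}(P/\langle U_j\rangle_P)$.

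**Plan.** The idea is that $g_j$ descends to the quotients. First I would note that $g_j$ is surjective and $g_j^{-1}(P/\langle U_j\rangle_P) = Q$; hence $g_j$ induces a surjective map $\bar g_j : C_j/Q \to (C_{j+1}/\langle U_j\rangle_P)\big/(P/\langle U_j\rangle_P)$. The target is canonically $C_{j+1}/P = D$ by the third isomorphism theorem, since $\langle U_j \rangle_P \subseteq P$ (because $U_j \in P$ and $P$ is a Poisson ideal, so $P$ contains the smallest Poisson ideal containing $U_j$). Because $\bar g_j$ has source $C_j/Q = E$ and is obtained by passing $g_j$ to the quotient by exactly its "kernel over $P/\langle U_j\rangle_P$", it is injective as well, hence an algebra isomorphism $E \xrightarrow{\sim} D$. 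Tracking generators: $e_i = V_i + Q \mapsto g_j(V_i) + P/\langle U_j\rangle_P = \overline{U_i} + P/\langle U_j\rangle_P$, which under the identification with $C_{j+1}/P$ is $U_i + P = d_i$. So the algebra isomorphism sends $e_i \mapsto d_i$ for all $i$.

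**Poisson compatibility.** It remains to check this algebra isomorphism is a Poisson isomorphism, i.e. $\bar g_j(\{e_k, e_l\}) = \{\bar g_j(e_k), \bar g_j(e_l)\}$ for all $k, l$. This follows because $g_j$ itself is a Poisson homomorphism (Lemma~\ref{g}), the quotient maps $C_{j+1} \to C_{j+1}/\langle U_j\rangle_P$ and $C_{j+1}/\langle U_j\rangle_P \to C_{j+1}/P$ are Poisson (quotients by Poisson ideals), and $C_j \to C_j/Q$ is Poisson; so $\bar g_j$ is a composite/quotient of Poisson maps and is automatically Poisson on the induced brackets. Concretely, for generators: $\bar g_j(\{e_k,e_l\}) = \bar g_j\big(\{V_k,V_l\} + Q\big) = g_j(\{V_k,V_l\}) + P/\langle U_j\rangle_P = \{g_j(V_k), g_j(V_l)\} + P/\langle U_j\rangle_P = \{\bar g_j(e_k), \bar g_j(e_l)\}$, using that $g_j$ preserves brackets and that the bracket on a Poisson quotient is the induced one.

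**Main obstacle.** The only delicate point is bookkeeping with the two successive quotients and making sure $g_j^{-1}(P/\langle U_j\rangle_P) = Q$ really is the full preimage, so that $\bar g_j$ is injective and not merely surjective; this is exactly the content of $Q = \varphi_j^1(P)$ together with surjectivity of $g_j$, so there is nothing substantive to prove beyond unwinding the definitions. One should also make explicit the identification $(C_{j+1}/\langle U_j\rangle_P)/(P/\langle U_j\rangle_P) \cong C_{j+1}/P$ as Poisson algebras, which is the standard third isomorphism theorem applied in the Poisson category (valid since $\langle U_j\rangle_P$ and $P$ are both Poisson ideals with $\langle U_j\rangle_P \subseteq P$). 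No computation is required.
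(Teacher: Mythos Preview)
Your proof is correct and follows essentially the same approach as the paper's: compose the surjective Poisson homomorphism $g_j:C_j\to C_{j+1}/\langle U_j\rangle_P$ with the third-isomorphism identification $(C_{j+1}/\langle U_j\rangle_P)/(P/\langle U_j\rangle_P)\cong C_{j+1}/P$, observe that the kernel is exactly $Q=g_j^{-1}(P/\langle U_j\rangle_P)$, and conclude. The paper's version is simply a two-line compression of what you have spelled out in detail.
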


\begin{proof}
$d_j=0$ means that $P\in\mathcal{P}_j^1(C_{j+1})$ and $Q=g_j^{-1}(P/\langle U_j\rangle_P)$. Thus we have a surjective Poisson algebra homomorphism:
	\[C_j\longrightarrow\frac{C_{j+1}/\langle U_j\rangle_P}{P/\langle U_j\rangle_P}\cong C_{j+1}/P,\]
whose kernel is $Q$.
\end{proof}

\begin{lem}
\label{quo}
Assume that $d_j\neq0$ and set $\ov{S_j}:=\{d_j^n\ |\ n\geq0\}$. Then there is an injective Poisson algebra homomorphism $\Lambda:E\rightarrow D\ov{S_j}^{-1}$ defined by:
\begin{align*}
	\Lambda(e_i)=\left\{
\begin{array}{ll}
d_i &\quad i\geq j,\\
\sum\limits_{k\geq0}\frac{1}{\eta_j^k}\ov{D_{j,k}(U_i)}d_j^{-k} &\quad  i<j
\end{array}
\right.
\end{align*}
where $\ov{D_{j,k}(U_i)}:=D_{j,k}(U_i)+P$.
\end{lem}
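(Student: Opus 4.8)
The statement to prove is Lemma~\ref{quo}: assuming $d_j = U_j + P \neq 0$, the formula for $\Lambda$ defines an \emph{injective Poisson algebra homomorphism} from $E = C_j/Q$ into the localisation $D\ov{S_j}^{-1}$, where $D = C_{j+1}/P$. The natural approach is to build $\Lambda$ by transporting the deleting derivation isomorphism $\theta$ of Theorem~\ref{iso} through the quotient maps, using the fact that $\varphi_j$ in the case $U_j = V_j \notin P$ is exactly $\varphi_j^0$, i.e. $Q = PS_j^{-1} \cap C_j$ with $C_j S_j^{-1} = C_{j+1} S_j^{-1}$ (Lemma~\ref{all}).

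Concretely, first I would pass to the localisation: since $d_j \neq 0$ and $D$ is a domain (as $P$ is prime), the set $\ov{S_j} = \{d_j^n\}$ consists of regular elements, so $D \hookrightarrow D\ov{S_j}^{-1}$ and the latter carries a unique Poisson structure extending that of $D$ (localisation of Poisson algebras, \cite[Section 2.4.2]{LGPV}). There is a canonical surjective Poisson homomorphism $C_{j+1}S_j^{-1} \to D\ov{S_j}^{-1}$ sending $U_i \mapsto d_i$ and $U_j^{-1} \mapsto d_j^{-1}$, with kernel $PS_j^{-1}$. Now recall from the proof of Proposition~1.12 (the structure proposition) that $\theta: \widehat{C_{j+1}}[U_j^{\pm 1};\al'_j]_P \xrightarrow{\cong} \widehat{C_{j+1}}[U_j^{\pm 1};\al'_j,\de'_j]_P = C_{j+1}S_j^{-1}$ sends $U_i$ to $V_i$, where $\theta(U_i) = \sum_{l\geq 0}\tfrac{1}{\eta_j^l}D'_{j,l}(U_i)U_j^{-l}$ and $D'_{j,l}$ restricts to $D_{j,l}$ on $A_{j-1}$ while killing $U_{j+1},\dots,U_n$; in particular $\theta(U_i) = \sum_{l\geq 0}\tfrac{1}{\eta_j^l}D_{j,l}(U_i)U_j^{-l}$ for $i<j$ and $\theta(U_i)=U_i$ for $i \geq j$. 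Since $\theta(C_j) \subseteq C_{j+1}S_j^{-1}$ and $\theta$ is a Poisson homomorphism, the composite $C_j \xrightarrow{\theta} C_{j+1}S_j^{-1} \to D\ov{S_j}^{-1}$ is a Poisson algebra homomorphism; tracking generators, $V_i \mapsto \theta(U_i) \mapsto \sum_{k\geq 0}\tfrac{1}{\eta_j^k}\ov{D_{j,k}(U_i)}d_j^{-k}$ for $i<j$ and $V_i \mapsto d_i$ for $i \geq j$. This is exactly the claimed formula, and it kills $\theta^{-1}(PS_j^{-1}) \cap C_j = Q$ (using $Q = \theta^{-1}(PS_j^{-1})\cap C_j$, which is the content of $\varphi_j^0$ being $\theta^{-1}(P S_j^{-1})\cap C_j$); hence it descends to a Poisson homomorphism $\Lambda: E \to D\ov{S_j}^{-1}$.

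For \emph{injectivity}, I would argue that $\ker \Lambda$ corresponds to $\theta^{-1}(PS_j^{-1}) \cap C_j = Q$ exactly, so that $\Lambda$ is injective on $E = C_j/Q$. More precisely, if $x \in C_j$ maps to $0$ in $D\ov{S_j}^{-1}$, then $\theta(x) \in \ker(C_{j+1}S_j^{-1} \to D\ov{S_j}^{-1}) = PS_j^{-1}$, so $x \in \theta^{-1}(PS_j^{-1}) \cap C_j$. Since $\theta^{-1}(PS_j^{-1})$ is a Poisson prime of $\widehat{C_{j+1}}[U_j^{\pm1};\al'_j]_P$ restricting to $\varphi_j^0(P) = Q$ on $C_j$ (note $\widehat{C_{j+1}}[U_j^{\pm1};\al'_j]_P$ and $C_{j+1}S_j^{-1}$ have the same underlying localisation $C_j S_j^{-1} = C_{j+1}S_j^{-1}$), we get $x \in Q$, as required.

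\textbf{Main obstacle.} The genuine work is bookkeeping: confirming that $\theta$ (set up for $\widehat{C_{j+1}}[U_j;\al'_j,\de'_j]_P$ after the reordering of Lemma~\ref{casn}) really does restrict to $C_j$ with the stated effect on \emph{all} generators $V_1,\dots,V_n$ — in particular that the higher derivations $D'_{j,k}$ agree with $D_{j,k}$ on the relevant arguments and vanish on $U_{j+1},\dots,U_n$ — and that the kernel identifications $Q = \theta^{-1}(PS_j^{-1})\cap C_j$ and $\ker(C_{j+1}S_j^{-1}\to D\ov{S_j}^{-1}) = PS_j^{-1}$ hold on the nose. The Poisson-homomorphism property of $\Lambda$ is then free, since each of $\theta$, the localisation map, and the quotient map is a Poisson homomorphism. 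I do not expect to need to verify the bracket identity for $\Lambda$ directly on generators, though one could: the formula for $\Lambda(e_i)$ is precisely $\theta(U_i)+P\ov{S_j}^{-1}$, so compatibility with brackets is inherited from $\theta$.
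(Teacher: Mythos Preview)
Your proposal is correct and follows essentially the same route as the paper's proof: both use that $C_jS_j^{-1}=C_{j+1}S_j^{-1}$ and $QS_j^{-1}=PS_j^{-1}$ to identify $(C_jS_j^{-1})/(QS_j^{-1})$ with $D\ov{S_j}^{-1}$, so that the inclusion $C_j\hookrightarrow C_jS_j^{-1}$ descends to an injection $E\hookrightarrow D\ov{S_j}^{-1}$ with the stated effect on generators. The paper's version is slightly more streamlined in that it does not invoke $\theta$ explicitly --- since $C_j$ is by construction the subalgebra of $C_{j+1}S_j^{-1}$ generated by the $V_i$, and $V_i$ already equals $\sum_{k\geq0}\eta_j^{-k}D_{j,k}(U_i)U_j^{-k}$ there, the map you write as ``$C_j\xrightarrow{\theta}C_{j+1}S_j^{-1}$'' is simply the subalgebra inclusion.
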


\begin{proof}
By assumption $P\in\mathcal{P}_j^0(C_{j+1})$, so $QS_j^{-1}=PS_j^{-1}$ is an ideal in $C_jS_j^{-1}=C_{j+1}S_j^{-1}$ and we have the following identifications:
\[\frac{C_jS_j^{-1}}{QS_j^{-1}}=\frac{C_{j+1}S_j^{-1}}{PS_j^{-1}}\cong D\ov{S_j}^{-1}.\]
Thus the canonical embedding of $C_j$ in $C_jS_j^{-1}$ induces a well-defined injective Poisson algebra homomorphism $\Lambda$ from $E$ to $D\ov{S_j}^{-1}$ whose expression is clear from the equalities:
\begin{align*}
	V_i=\left\{
\begin{array}{ll}
U_i &\quad i\geq j,\\
\sum\limits_{k\geq0}\frac{1}{\eta_j^k}D_{j,k}(U_i)U_j^{-k} &\quad  i<j.
\end{array}
\right.
\end{align*}
\end{proof}

From Lemma \ref{d=0} and Lemma \ref{quo}, we can state:

\begin{cor}
\label{fquo}
$D$ and $E$ have the same Poisson field of fractions (if $U_j\notin P$, we identify $E$ with its image in $DS_j^{-1}$ by $\Lambda$ so that we have $D\ov{S_j}^{-1}=E\ov{S_j}^{-1}$).
\end{cor}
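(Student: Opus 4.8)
The plan is to split into the two cases already isolated by Lemmas \ref{d=0} and \ref{quo}, according to whether $d_j = 0$ (equivalently $U_j \in P$, so $P \in \mathcal{P}_j^1(C_{j+1})$) or $d_j \neq 0$ (equivalently $U_j \notin P$, so $P \in \mathcal{P}_j^0(C_{j+1})$). In the first case, Lemma \ref{d=0} already produces a Poisson algebra isomorphism $E \cong D$ sending $e_i$ to $d_i$ for all $i$; passing to fraction fields gives $\F(E) \cong \F(D)$ directly, and there is essentially nothing further to do. So the content of the corollary is really in the second case.

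In the second case, Lemma \ref{quo} gives an injective Poisson algebra homomorphism $\Lambda : E \to D\ov{S_j}^{-1}$. First I would observe that $D\ov{S_j}^{-1}$ is a localisation of $D$, hence has the same Poisson field of fractions as $D$ (a Poisson bracket extends uniquely under localisation, cf.\ the remark before Theorem \ref{iso} citing \cite[Section 2.4.2]{LGPV}, and $\F(D\ov{S_j}^{-1}) = \F(D)$ as fields). So it suffices to show that $\Lambda$ extends to an isomorphism on fraction fields, i.e.\ that the image $\Lambda(E)$ generates the same field of fractions inside $D\ov{S_j}^{-1}$ as $D\ov{S_j}^{-1}$ itself. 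For this I would identify $E$ with $\Lambda(E) \subseteq D\ov{S_j}^{-1}$ and use the identity $C_j S_j^{-1} = C_{j+1} S_j^{-1}$ together with the concrete localisation picture from the proof of Lemma \ref{quo}: there we have $C_j S_j^{-1}/Q S_j^{-1} = C_{j+1} S_j^{-1}/P S_j^{-1} \cong D\ov{S_j}^{-1}$, and under this identification the subalgebra coming from $C_j$ maps onto $\Lambda(E)$. Hence $\Lambda(E)\ov{S_j}^{-1} = D\ov{S_j}^{-1}$, which is exactly the parenthetical assertion of the corollary; taking fraction fields of both sides yields $\F(E) = \F(D\ov{S_j}^{-1}) = \F(D)$, and since $\Lambda$ is a Poisson homomorphism this is an isomorphism of Poisson fields.

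Concretely, the key steps in order are: (i) dispose of the case $d_j = 0$ by invoking Lemma \ref{d=0} and taking fraction fields; (ii) in the case $d_j \neq 0$, invoke Lemma \ref{quo} to get $\Lambda : E \hookrightarrow D\ov{S_j}^{-1}$; (iii) note that $D\ov{S_j}^{-1}$ is a localisation of $D$, so $\F(D\ov{S_j}^{-1}) = \F(D)$ as Poisson fields; (iv) use the identification $C_j S_j^{-1}/Q S_j^{-1} \cong C_{j+1} S_j^{-1}/P S_j^{-1} \cong D\ov{S_j}^{-1}$ from the proof of Lemma \ref{quo} to conclude that $\Lambda(E)\ov{S_j}^{-1} = D\ov{S_j}^{-1}$, i.e.\ $E\ov{S_j}^{-1} = D\ov{S_j}^{-1}$ after the identification; (v) take fraction fields to finish.

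The main obstacle is purely bookkeeping: making precise that the injection $\Lambda$, after inverting $d_j$ on the source (which is legitimate since $e_j \mapsto d_j$ and $d_j \ne 0$ in the domain $D$), becomes the identity on the already-identified localisation $C_j S_j^{-1}/Q S_j^{-1} = C_{j+1} S_j^{-1}/P S_j^{-1}$. Once one unwinds that the formulas defining $\Lambda(e_i)$ are exactly the images of the change-of-variables formulas $V_i = \sum_{k\ge0} \eta_j^{-k} D_{j,k}(U_i) U_j^{-k}$ modulo $P S_j^{-1}$, the equality of localisations is immediate. No genuinely hard estimate or new construction is needed; the corollary is a formal consequence of the two preceding lemmas plus the standard fact that localisation does not change the field of fractions.
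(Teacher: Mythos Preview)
Your proposal is correct and follows exactly the approach the paper has in mind: the paper simply writes ``From Lemma~\ref{d=0} and Lemma~\ref{quo}, we can state'' and leaves the corollary without further proof, treating it as an immediate consequence of the two preceding lemmas. You have spelled out precisely the bookkeeping (the case split, the identification $E\cong\Lambda(E)$, and the equality $E\ov{S_j}^{-1}=D\ov{S_j}^{-1}$ coming from $C_jS_j^{-1}/QS_j^{-1}=C_{j+1}S_j^{-1}/PS_j^{-1}$) that the paper leaves implicit.
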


An easy induction gives us the following result on the Poisson structure of the fields of fractions of the Poisson prime quotients of $A$. 

\begin{cor}
\label{quotients} Let $A\in\mathcal{P}$, $P\in\PS(A)$ and set $Q:=\varphi(P)\in\PS(\ov{A})$. Then we have a Poisson algebra isomorphism:
\[\F\big(A/P\big)\cong\F\big(\ov{A}/Q\big).\]
\end{cor}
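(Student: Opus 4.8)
The plan is to derive Corollary~\ref{quotients} from the one-step statement in Corollary~\ref{fquo} by a decreasing induction on the steps of the Poisson deleting derivations algorithm, exactly mirroring the way $\varphi$ was built as the composite $\varphi=\varphi_2\circ\cdots\circ\varphi_n$. Concretely, given $P\in\PS(A)=\PS(C_{n+1})$, set $P_{n+1}:=P$ and $P_j:=\varphi_j(P_{j+1})\in\PS(C_j)$ for $j=n,n-1,\dots,2$, so that $P_2=\varphi(P)=Q$. For each $j$, applying Corollary~\ref{fquo} to the pair $(C_{j+1},C_j)$ with the Poisson prime $P_{j+1}$ and $Q=\varphi_j(P_{j+1})=P_j$ yields a Poisson algebra isomorphism $\F(C_{j+1}/P_{j+1})\cong\F(C_j/P_j)$: indeed either $U_j\in P_{j+1}$, in which case Lemma~\ref{d=0} gives an honest Poisson isomorphism $C_{j+1}/P_{j+1}\cong C_j/P_j$ compatible with the generators (hence an isomorphism of fraction fields), or $U_j\notin P_{j+1}$, in which case Lemma~\ref{quo} realises $C_j/P_j$ as a Poisson subalgebra of $(C_{j+1}/P_{j+1})\ov{S_j}^{-1}$ whose localisation at $\ov{S_j}$ is all of $(C_{j+1}/P_{j+1})\ov{S_j}^{-1}$, so that taking fraction fields on both sides gives $\F(C_j/P_j)\cong\F(C_{j+1}/P_{j+1})$.

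Having these isomorphisms for each $j$ between $2$ and $n$, I would compose them:
\[
\F(A/P)=\F(C_{n+1}/P_{n+1})\cong\F(C_n/P_n)\cong\cdots\cong\F(C_2/P_2)=\F(\ov{A}/Q),
\]
which is precisely the asserted Poisson algebra isomorphism. Each arrow is a Poisson isomorphism of fields because a Poisson algebra isomorphism extends uniquely to the fraction field (the Poisson bracket extends uniquely by localisation, as recalled in the excerpt before Theorem~\ref{iso}), and a composition of Poisson isomorphisms is a Poisson isomorphism.

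The only point requiring a little care — and the one I would expect to be the ``main obstacle'', though it is minor — is the bookkeeping that $\varphi_j(P_{j+1})$ really is the right ideal to feed into Corollary~\ref{fquo}: one must check that $Q$ in the statement of Corollary~\ref{fquo} is taken to be $\varphi_j(P)$ (it is, by the hypotheses of Section~\ref{sectionCj}), and that the membership $U_j\in P_{j+1}$ versus $U_j\notin P_{j+1}$ dichotomy used there is the same dichotomy ($P_{j+1}\in\mathcal{P}_j^1(C_{j+1})$ versus $\mathcal{P}_j^0(C_{j+1})$) governing whether $\varphi_j=\varphi_j^1$ or $\varphi_j^0$. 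Both are immediate from the definitions in Section~\ref{gg}. With that observed, the induction closes and no genuine computation is needed — the heavy lifting is entirely contained in Lemmas~\ref{d=0} and~\ref{quo} and Corollary~\ref{fquo}, which were already established one step at a time.
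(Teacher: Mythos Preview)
Your proposal is correct and follows exactly the approach of the paper, which simply states that ``an easy induction gives us the following result'' and provides no further details. You have fleshed out precisely that induction, using Corollary~\ref{fquo} at each step of the algorithm and composing the resulting Poisson isomorphisms of fraction fields along the chain $P_{n+1}=P,\,P_j=\varphi_j(P_{j+1}),\dots,P_2=Q$.
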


In particular this corollary says that in order to prove the quadratic Poisson Gel'fand-Kirillov problem (see \cite{GL} or \cite{LL}) for the Poisson prime quotients of $A$ it is enough to prove it for the Poisson prime quotients of the Poisson affine space $\ov{A}$. We retrieve the result of \cite[Therorem 3.3 (2)]{LL} with the addition that the ideal $Q$ is now charaterised by the canonical embedding. In characteristic zero the Poisson prime quotients of a Poisson affine space indeed satisfy the quadratic Poisson Gel'fand-Kirillov problem (\cite[Theorem 3.3]{GL}), but this is not clear anymore in positive characteristic.

\section{Poisson Dixmier-Moeglin equivalence}
\label{PDM}

	In this section we prove that the \emph{Poisson Dixmier-Moeglin equivalence} holds for the Poisson algebras of the class $\mathcal{P}$ when $\car\K=0$. As stated in the introduction it only remains to show that the Poisson rational ideals of $A\in\mathcal{P}$ are also locally closed. We continue to use the notation of Hypothesis \ref{hyp1} and of Sections \ref{ddap} and \ref{sectionCj}. For a Poisson prime ideal $P$ of a Poisson algebra $A$ we set:
\[V(P)=\{I\in\PS(A)\ |\ I\supseteq P\}\quad \text{and}\quad 
W(P)=\{I\in\PS(A)\ |\ I\not\supseteq P\}.\]
The set $V(P)$ is a closed set of $\PS(A)$ and $W(P)$ is an open of $\PS(A)$. The following lemma is a Poisson version of \cite[Lemma II.7.7]{BG}.

\begin{lem}
\label{equu}
Let $A$ be a Poisson algebra and $P\in\PS(A)$. Then $P$ is locally closed if and only if the intersection of all the Poisson prime ideals properly containing $P$ is an ideal properly containing $P$.
\end{lem}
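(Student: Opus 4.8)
The statement is a purely topological characterization of locally closed points in $\PS(A)$, so the plan is to unwind the definition of ``locally closed'' using the closed sets $V(P)$ and the open sets $W(P)$ introduced just above, together with the fact that $\PS(A)$ carries the Zariski topology induced from $\Sp(A)$. Recall that $\{P\}$ is a locally closed point means that $\{P\}$ is open in its closure. The closure of $\{P\}$ in $\PS(A)$ is precisely $V(P)$, since Poisson prime ideals containing $P$ are exactly the points of every closed set containing $P$. So the first step is to record that $\overline{\{P\}} = V(P)$ and that $P$ is locally closed iff $\{P\}$ is open in the subspace $V(P)$.

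Next I would translate ``$\{P\}$ open in $V(P)$'' into a statement about ideals. A basic open set of $\PS(A)$ is of the form $W(I)$ for a Poisson prime $I$ (or a finite union of such), and $\{P\}$ is open in $V(P)$ iff there is an open set $U$ of $\PS(A)$ with $U \cap V(P) = \{P\}$. Equivalently, writing $N$ for the intersection $\bigcap_{Q \in V(P),\, Q \supsetneq P} Q$ of all Poisson primes properly containing $P$, the point $\{P\}$ fails to be open in $V(P)$ exactly when every open set meeting $V(P)$ in a neighbourhood of $P$ also contains some $Q \supsetneq P$; this happens precisely when $N = P$. Conversely, if $N \supsetneq P$, pick $a \in N \setminus P$; then $W_{\Sp}(a) = \{I \in \Sp(A) : a \notin I\}$ is open, its trace on $\PS(A)$ is open, and intersecting with $V(P)$ gives exactly $\{P\}$, because any Poisson prime $Q$ with $Q \supseteq P$ and $a \notin Q$ cannot properly contain $P$ (as $a \in N \subseteq Q$ would then force... wait — rather: $a\in N$ means $a$ lies in every Poisson prime properly containing $P$, so $a\notin Q$ forces $Q$ not to properly contain $P$, hence $Q=P$). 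This gives one direction; the other direction argues contrapositively that if $N = P$ then no open set can isolate $P$ within $V(P)$.

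The one subtlety I would be careful about — and I expect this to be the main point requiring attention rather than a genuine obstacle — is that $N$ as defined is an intersection of Poisson prime ideals and is therefore automatically a Poisson ideal; the lemma phrases the condition as ``$N$ is an ideal properly containing $P$'', which is really just the assertion $N \supsetneq P$ (the ``is an ideal'' clause being automatic), so I would state it that way and note the automatic part. A second point to handle cleanly is the possibility that there are \emph{no} Poisson primes properly containing $P$ (i.e.\ $P$ is a maximal element of $\PS(A)$): then $N$, as an empty intersection, should be read as $A$, which properly contains $P$, and indeed such a $P$ is a closed point hence trivially locally closed — consistent with the statement. With these conventions fixed, the proof is a short two-way argument: $P$ locally closed $\iff$ $\{P\}$ open in $V(P)$ $\iff$ there exists $a \in A$ with $W(a) \cap V(P) = \{P\}$ $\iff$ $N \supsetneq P$. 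I would cite the noncommutative analogue \cite[Lemma II.7.7]{BG} as the template and simply check each implication goes through verbatim in the Poisson setting, using only that contractions/intersections of Poisson ideals are Poisson ideals.
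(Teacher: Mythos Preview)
Your proposal is correct and follows essentially the same approach as the paper: both directions hinge on the observation that $\overline{\{P\}}=V(P)$ and that an open set isolating $P$ within $V(P)$ exists precisely when the intersection $\mathcal{I}$ of all Poisson primes properly containing $P$ strictly contains $P$. The only cosmetic difference is that for the ``if'' direction the paper uses the open set $W(\mathcal{I})$ attached to the whole ideal $\mathcal{I}$, whereas you pick a single element $a\in\mathcal{I}\setminus P$ and use the principal open $W(a)$; and for the ``only if'' direction the paper argues directly (from $V(I)\cap W(L)=\{P\}$ deduce $P\varsubsetneq L+P\subseteq\mathcal{I}$) rather than contrapositively as you do.
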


\begin{proof}
Let $\mathcal{I}$ be the intersection of all the Poisson prime ideals of $A$ properly containing $P$. If $P\varsubsetneq \mathcal{I}$, then $W(\mathcal{I})\cap V(P)=\{P\}$, i.e. $\{P\}$ is a locally closed point $\PS(A)$. Conversely, if $P$ is locally closed, then there are ideals $I$ and $L$ in $A$ such that $V(I)\cap W(L)=\{P\}$. Therefore we can see that $P \varsubsetneq L+P\subseteq \mathcal{I}$.
\end{proof}

Hence $P$ is locally closed if and only if the intersection of all non trivial Poisson prime ideals in $A/P$ is non trivial.

\begin{prop}
\label{pdmm}
Let $A\in\mathcal{P}$ and assume that $\car\K=0$. Then Poisson rational ideals of $A$ are Poisson locally closed ideals.
\end{prop}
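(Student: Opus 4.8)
The plan is to transfer the statement step by step along the Poisson deleting derivations algorithm down to the Poisson affine space $\ov{A}=C_2$, for which the implication ``Poisson rational $\Rightarrow$ Poisson locally closed'' is already available (it is part of the Poisson Dixmier--Moeglin equivalence for Poisson affine spaces when $\car\K=0$, \cite[Example 4.6]{Goo1}). Concretely, I will establish the following transfer along each step: for every $2\le j\le n$ and every $P\in\PS(C_{j+1})$, the ideal $P$ is Poisson rational in $C_{j+1}$ if and only if $\varphi_j(P)$ is Poisson rational in $C_j$, and likewise $P$ is Poisson locally closed in $\PS(C_{j+1})$ if and only if $\varphi_j(P)$ is Poisson locally closed in $\PS(C_j)$. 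Granting this, given $P\in\PS(A)$ Poisson rational, one composes the equivalences over $j=n,n-1,\dots,2$ (recall $\varphi=\varphi_2\circ\cdots\circ\varphi_n$) to see that $\varphi(P)\in\PS(\ov{A})$ is Poisson rational, hence Poisson locally closed in $\PS(\ov{A})$ by the affine-space case, and then composing back down that $P$ is Poisson locally closed in $\PS(A)$; this is the proposition.

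For the rationality half of the transfer, fix $P\in\PS(C_{j+1})$ and put $Q:=\varphi_j(P)\in\PS(C_j)$. By Corollary \ref{fquo} the Poisson $\K$-algebras $C_{j+1}/P$ and $C_j/Q$ have the same Poisson field of fractions, so their Poisson centres $Z_P\big(\F(C_{j+1}/P)\big)$ and $Z_P\big(\F(C_j/Q)\big)$ are isomorphic as $\K$-algebras; one is algebraic over $\K$ exactly when the other is, which is the claimed equivalence. (At the level of $\varphi$ this is just Corollary \ref{quotients}.)

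For the local-closedness half I will use the elementary topological fact that if $Y$ is a subset of a topological space $X$ which is either open or closed in $X$ and $x\in Y$, then $\{x\}$ is locally closed in $X$ if and only if it is locally closed in $Y$ (immediate by intersecting the defining closed/open pair with $Y$, respectively by taking closures in $X$). There are two cases. If $U_j\notin P$, then $P\in\mathcal{P}_j^0(C_{j+1})$, which is open in $\PS(C_{j+1})$ as the complement of $\{I\in\PS(C_{j+1})\ |\ \langle U_j\rangle_P\subseteq I\}$; likewise $Q\in\mathcal{P}_j^0(C_j)$, open in $\PS(C_j)$; and by Lemma \ref{all}, $\varphi_j$ restricts to a homeomorphism $\mathcal{P}_j^0(C_{j+1})\to\mathcal{P}_j^0(C_j)$ sending $P$ to $Q$. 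Applying the topological fact at both ends of this homeomorphism gives that $P$ is locally closed in $\PS(C_{j+1})$ iff $Q$ is locally closed in $\PS(C_j)$. If $U_j\in P$, then $P\in\mathcal{P}_j^1(C_{j+1})=\{I\in\PS(C_{j+1})\ |\ \langle U_j\rangle_P\subseteq I\}$, closed in $\PS(C_{j+1})$; by Lemma \ref{g} the kernel $N_j$ of $g_j$ is a Poisson ideal, so $\{I\in\PS(C_j)\ |\ N_j\subseteq I\}$ is closed in $\PS(C_j)$; and by Lemma \ref{bee}, $\varphi_j$ restricts to a homeomorphism from $\mathcal{P}_j^1(C_{j+1})$ onto this closed set, sending $P$ to $Q$. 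Again the topological fact at both ends yields the equivalence.

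The only genuinely essential ingredient — and the reason the argument must be run step by step rather than in one move through $\varphi$ — is that the pieces $\mathcal{P}_j^0(C_{j+1})$ and $\mathcal{P}_j^1(C_{j+1})$ (and their images in $\PS(C_j)$) are respectively \emph{open} and \emph{closed} in the ambient Poisson spectra, which is precisely what makes the homeomorphisms of Lemmas \ref{all} and \ref{bee} strong enough to move local closedness in both directions. A single application of the canonical embedding $\varphi$ would only land one inside a stratum $\PS_w(\ov{A})$, which is in general merely locally closed in $\PS(\ov{A})$, and local closedness does not pass freely through an arbitrary locally closed subspace, so no shortcut via $\varphi$ alone is available. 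The hypothesis $\car\K=0$ enters only through the affine-space case; the transfer step itself is characteristic-free.
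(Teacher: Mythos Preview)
Your proof is correct, and the overall architecture matches the paper's: both run an increasing induction on $j$ along the algorithm, use \cite[Example~4.6]{Goo1} as the base case $C_2=\ov{A}$, and transfer rationality via Corollary~\ref{fquo}. The difference lies in how the local-closedness transfer is handled in the case $U_j\notin P$. The paper does not argue topologically: it invokes Lemma~\ref{equu}, splits the Poisson primes strictly above $P$ (resp.\ $\varphi_j(P)$) according to whether they contain $U_j$ (resp.\ $V_j$), and exhibits an explicit witness $aU_j\in\big(\mathcal{T}_{j+1}^0\cap\mathcal{T}_{j+1}^1\big)\setminus P$, where $a$ comes from the induction hypothesis via the bijection $\mathcal{F}_{j+1}^0\leftrightarrow\mathcal{F}_j^0$. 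Your route replaces this element-chasing by the clean observation that local closedness passes up and down through open or closed subspaces, combined with Lemmas~\ref{all} and~\ref{bee}. This is more conceptual, handles both cases uniformly, and actually gives the two-sided equivalence ``$P$ locally closed $\Leftrightarrow$ $\varphi_j(P)$ locally closed'' at each step, whereas the paper only extracts the forward implication it needs. Your closing remark that the characteristic-zero hypothesis is used solely in the base case is also accurate.
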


\begin{proof}
Recall that by applying the Poisson deleting derivations algorithm to the Poisson algebra $A$ we get a sequence of Poisson algebras $C_j$ where $j$ runs from $n+1$ to $2$ such that $C_{n+1}=A$ and $C_2=\ov{A}$ is a Poisson affine space. We will show by an increasing induction on $j$ that all Poisson rational ideals of $C_j$ are locally closed. When $j=2$ the algebra $\ov{A}$ is a Poisson affine space and the result comes from \cite[Example 4.6]{Goo1}. Assume that for some $2\leq j\leq n$ the Poisson rational ideals of $C_j$ are locally closed. Let $P\in\PS(C_{j+1})$ be a Poisson rational ideal. We distinguish between two cases: either $U_j\in P$, or $U_j\notin P$.

	\underline{Case 1:} If $U_j\in P$, then by Lemma \ref{d=0} we get a Poisson algebra isomorphism between $C_{j+1}/P$ and $C_{j}/\varphi_j(P)$, and the result follows.

	\underline{Case 2:} If $U_j\notin P$, then by Lemma \ref{quo} we get the equality $C_jS_j^{-1}/ QS_j^{-1}=C_{j+1}S_j^{-1}/ PS_j^{-1}$, which leads to the isomorphism:
  \[Z_P\Big(\F\Big(\frac{C_{j+1}}{P}\Big)\Big)\cong Z_P\Big(\F\Big(\frac{C_{j}}{\varphi_j(P)}\Big)\Big).\]
Therefore $\varphi_j(P)\in\PS(C_j)$ is Poisson rational, and so is locally closed. We now introduce a few notation: 
\begin{align*}
&\mathcal{F}_{j}^0:=\{Q\in\PS(C_j)\ |\ \varphi_j(P)\varsubsetneq Q \text{ and } V_j \notin Q\},\\
&\mathcal{F}_{j}^1:=\{Q\in\PS(C_j)\ |\ \varphi_j(P)\varsubsetneq Q \text{ and } V_j \in Q\},\\
&\mathcal{F}_{j+1}^0:=\{Q\in\PS(C_{j+1})\ |\ P\varsubsetneq Q \text{ and } U_j \notin Q\},\\
&\mathcal{F}_{j+1}^1:=\{Q\in\PS(C_{j+1})\ |\ P\varsubsetneq Q \text{ and } U_j \in Q\},
\end{align*}
\[\mathcal{T}_{j}^0:=\bigcap_{Q\in\mathcal{F}_{j}^0} Q,\quad \mathcal{T}_{j}^1:=\bigcap_{Q\in\mathcal{F}_{j}^1} Q, \quad \mathcal{T}_{j+1}^0:=\bigcap_{Q\in\mathcal{F}_{j+1}^0} Q,\quad \text{and}\quad \mathcal{T}_{j+1}^1:=\bigcap_{Q\in\mathcal{F}_{j+1}^1} Q.\]
Let $\mathcal{I}$ be the intersection of all the Poisson prime ideals of $C_{j+1}$ properly containing $P$. We have:
\begin{align}
\label{gt}
\Big(P\text{ locally closed}\Big) \iff \Big(P\varsubsetneq\mathcal{I} \Big) \iff \Big(P\varsubsetneq\big(\mathcal{T}_{j+1}^0 \cap\mathcal{T}_{j+1}^1\big) \Big).
\end{align}
By the induction hypothesis we have:
\begin{align*}
\varphi_j (P) \varsubsetneq\big(\mathcal{T}_{j}^0 \cap\mathcal{T}_{j}^1\big) \quad\text{so that} \quad \varphi_j(P)=PS_j^{-1}\cap C_j\varsubsetneq \mathcal{T}_{j}^0.
\end{align*}
Since the map $\varphi_j$ restricts to a homeomorphism from $\mathcal{F}_{j+1}^0$ to $\mathcal{F}_{j}^0$ we have:
\[\varphi_j(P)\varsubsetneq \mathcal{T}_{j}^0 \iff P\varsubsetneq \mathcal{T}_{j+1}^0.\]
Therefore there exists $a\in\big(\mathcal{T}_{j+1}^0\setminus P\big)$. Moreover by definition we have $U_j\in\big(\mathcal{T}_{j+1}^1\setminus P\big)$. Since $P$ is a prime ideal and $a,U_j\notin P$ it clear that: 
\[aU_j\in\Big(\mathcal{T}_{j+1}^0 \cap\mathcal{T}_{j+1}^1 \setminus P\Big),\]
and by (\ref{gt}) we obtain that $P$ is locally closed. This concludes the induction. The case $j=n$ gives us the result for $C_{n+1}=A$.
\end{proof}

We are now ready to state the main results of this section.

\begin{thm}
Let $A\in\mathcal{P}$ and assume that $\car\K=0$. Then $A$ satisfies the Poisson Dixmier-Moeglin equivalence.
\end{thm}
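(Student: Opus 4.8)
The plan is to obtain the theorem as an immediate consequence of the chain of inclusions recalled in the introduction together with Proposition~\ref{pdmm}. Recall that the Poisson Dixmier--Moeglin equivalence for $A$ asserts that three sets of Poisson prime ideals coincide: the Poisson primitive ideals $(1)$, the locally closed Poisson ideals $(2)$, and the Poisson rational ideals $(3)$. First I would observe that every $A\in\mathcal{P}$ is an affine Poisson $\K$-algebra: being an iterated Poisson-Ore extension of $\K$, it is, as an algebra, the polynomial ring $\K[X_1,\dots,X_n]$, hence finitely generated. Therefore the results of \cite{Oh2} apply in characteristic zero and yield the inclusions
\[
\{\text{locally closed Poisson ideals}\}\ \subseteq\ \{\text{Poisson primitive ideals}\}\ \subseteq\ \{\text{Poisson rational ideals}\}.
\]

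Next I would invoke Proposition~\ref{pdmm}: since $\car\K=0$, every Poisson rational ideal of $A$ is Poisson locally closed, which is precisely the reverse inclusion $(3)\subseteq(2)$, namely
\[
\{\text{Poisson rational ideals}\}\ \subseteq\ \{\text{locally closed Poisson ideals}\}.
\]
Combining the two displays forces $(1)$, $(2)$ and $(3)$ to coincide, which is exactly the statement of the theorem. One may additionally record the sharper fact, already visible from the canonical partition $\PS(A)=\bigsqcup_{w\in W'_P}\PS_w(A)$, that the Poisson primitive ideals of $A$ are exactly the Poisson prime ideals maximal in their stratum; this refinement is not needed for the equivalence itself but follows by the same localisation-and-$\varphi_j$ bookkeeping.

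Thus essentially no new argument is required for the theorem: all the substance is carried by Proposition~\ref{pdmm}, and that is where the genuine obstacle lies. Its proof proceeds by increasing induction along the tower $C_2=\ov{A}, C_3,\dots, C_{n+1}=A$ produced by the Poisson deleting derivations algorithm, the base case being that a Poisson affine space satisfies the equivalence \cite[Example~4.6]{Goo1}. The inductive step from $C_j$ to $C_{j+1}$ splits on whether $U_j\in P$: if $U_j\in P$ one transports local closedness through the Poisson isomorphism $C_{j+1}/P\cong C_j/\varphi_j(P)$ of Lemma~\ref{d=0}; if $U_j\notin P$ one uses $C_jS_j^{-1}=C_{j+1}S_j^{-1}$, the rationality criterion together with Lemma~\ref{equu}, and the homeomorphism $\varphi_j$ on the relevant pieces of the spectra, the crucial point being to exhibit an element $aU_j\in\mathcal{T}_{j+1}^0\cap\mathcal{T}_{j+1}^1\setminus P$ witnessing $P\varsubsetneq\mathcal{T}_{j+1}^0\cap\mathcal{T}_{j+1}^1$. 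Once Proposition~\ref{pdmm} is in hand the theorem follows with no further computation.
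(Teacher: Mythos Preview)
Your proposal is correct and matches the paper's approach exactly: the paper states the theorem with no separate proof, treating it as an immediate consequence of Proposition~\ref{pdmm} together with the inclusions $(2)\subseteq(1)\subseteq(3)$ from \cite{Oh2} valid for affine Poisson algebras in characteristic zero. Your summary of how Proposition~\ref{pdmm} is proved is also accurate, though strictly speaking that content belongs to the proof of the proposition rather than the theorem itself.
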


\begin{cor}
\label{primprim}
Let $A\in\mathcal{P}$ and assume that $\car\K=0$. Then for all $P\in\PS(A)$ we have the following equivalence:
\[P\text{ is Poisson primitive in }A \iff \varphi(P)\text{ is Poisson primitive in }\ov{A}.\]
\end{cor}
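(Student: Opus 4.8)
The plan is to deduce Corollary \ref{primprim} directly from the characterisation of Poisson primitive ideals that emerges from the previous results, rather than from the definition via Poisson cores of maximal ideals. The key observation is that, having just established the Poisson Dixmier--Moeglin equivalence for all algebras of the class $\mathcal{P}$ (including the Poisson affine space $\ov{A}$), an ideal $P\in\PS(A)$ is Poisson primitive if and only if it is Poisson rational, and likewise $\varphi(P)\in\PS(\ov{A})$ is Poisson primitive if and only if it is Poisson rational.

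So the first step is to reduce the statement to the equivalence ``$P$ is Poisson rational in $A$ $\iff$ $\varphi(P)$ is Poisson rational in $\ov{A}$''. This follows immediately from the theorem just proved (the Poisson Dixmier--Moeglin equivalence holds both for $A\in\mathcal{P}$ and for $\ov{A}$, which is a Poisson affine space and hence also in $\mathcal{P}$), since rationality is one of the three equivalent conditions in each case.

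The second step is to verify this rationality equivalence, and here the work has essentially already been done inside the proof of Proposition \ref{pdmm}. By Corollary \ref{quotients} we have a Poisson algebra isomorphism $\F(A/P)\cong\F(\ov{A}/\varphi(P))$; restricting a Poisson algebra isomorphism to Poisson centres gives an isomorphism
\[Z_P\big(\F(A/P)\big)\cong Z_P\big(\F(\ov{A}/\varphi(P))\big)\]
of $\K$-algebras. Therefore one of these fields is algebraic over $\K$ if and only if the other is, which is precisely the assertion that $P$ is Poisson rational in $A$ if and only if $\varphi(P)$ is Poisson rational in $\ov{A}$. Combining this with the two applications of the Poisson Dixmier--Moeglin equivalence from the first step yields the corollary.

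I do not anticipate a serious obstacle here: the statement is a formal consequence of Corollary \ref{quotients} together with the main theorem of the section, and the only mild point to be careful about is that $\ov{A}$ genuinely belongs to $\mathcal{P}$ (so that the equivalence applies to it) — but this is a Poisson affine space, which was already observed to be an iterated Poisson-Ore extension satisfying Hypothesis \ref{hyp1}, so it is unproblematic.
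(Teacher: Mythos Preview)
Your argument is correct and is precisely the intended deduction: the paper states Corollary~\ref{primprim} without proof, as an immediate consequence of the Poisson Dixmier--Moeglin equivalence just established together with the Poisson isomorphism $\F(A/P)\cong\F(\ov{A}/\varphi(P))$ of Corollary~\ref{quotients}. Your write-up makes explicit exactly the two-step reduction (primitive $\Leftrightarrow$ rational on each side, then transfer rationality via the isomorphism of Poisson fraction fields) that the paper leaves implicit.
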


We can also describe the primitive ideals of $A\in\mathcal{P}$ inside their stata, namely they are exactely the maximal ideals in their respective strata.

\begin{prop}
Let $A\in\mathcal{P}$ and assume that $\car\K=0$. Suppose that $w\in W'_P$ and let $P\in\PS_w(A)$. Then:
\[P\text{ is Poisson primitive }\iff\ P\text{ is maximal in }\PS_w(A).\]
\end{prop}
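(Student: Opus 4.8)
The plan is to prove both implications, reducing everything to the corresponding statement for a Poisson affine space via the canonical embedding $\varphi$ and the homeomorphism of Theorem \ref{topo}. The key facts I would lean on are: (i) the equivalence ``$P$ Poisson primitive $\iff P$ Poisson rational'' from the Poisson Dixmier-Moeglin equivalence just established for $A\in\mathcal{P}$ (and likewise for $\ov A$, since $\ov A$ is a Poisson affine space); (ii) Corollary \ref{primprim}, that $P$ is Poisson primitive in $A$ iff $\varphi(P)$ is Poisson primitive in $\ov A$; (iii) Corollary \ref{quotients}, the Poisson-field-of-fractions isomorphism $\F(A/P)\cong\F(\ov A/\varphi(P))$; and (iv) Theorem \ref{topo}, that $\varphi$ restricts to a homeomorphism $\PS_w(A)\to\varphi(\PS_w(A))$ onto a \emph{closed} subset of $\PS_w(\ov A)$, respecting inclusions within the stratum.

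First I would handle the reduction to the affine space. Suppose the proposition is known for Poisson affine spaces, i.e.\ for $w\in W'_{P}(\ov A)$ and $Q\in\PS_w(\ov A)$ one has ``$Q$ Poisson primitive $\iff Q$ maximal in $\PS_w(\ov A)$''. Given $P\in\PS_w(A)$, put $Q:=\varphi(P)\in\PS_w(\ov A)$. By Corollary \ref{primprim}, $P$ is Poisson primitive iff $Q$ is. So it suffices to show $P$ is maximal in $\PS_w(A)$ iff $Q$ is maximal in $\PS_w(\ov A)$. The forward-to-backward half of this needs Proposition \ref{wPQ}: if $Q\subsetneq Q'$ with $Q'\in\PS_w(\ov A)$, then $\varphi(P)=Q\subseteq Q'$ forces $Q'\in\Imm(\varphi)$, so $Q'=\varphi(P')$ for some $P'\in\PS(A)$; by Lemma \ref{iff} (iterated, as in Corollary \ref{belongs}) $P'$ lies in the same stratum $\PS_w(A)$, and since $\varphi$ respects inclusions within a stratum and is injective, $P\subsetneq P'$. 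Thus $P$ not maximal in $\PS_w(A)$ $\Rightarrow$ $Q$ not maximal in $\PS_w(\ov A)$. Conversely, if $P\subsetneq P'$ in $\PS_w(A)$, then $\varphi(P)\subsetneq\varphi(P')$ in $\PS_w(\ov A)$ (again injectivity plus inclusion-preservation within the stratum), so $Q$ is not maximal. Hence $P$ maximal in $\PS_w(A)\iff Q$ maximal in $\PS_w(\ov A)$, and combining with Corollary \ref{primprim} gives the proposition for $A$ granted it for $\ov A$.

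It then remains to prove the statement for a Poisson affine space $\ov A=\K_{\bo\la}[T_1,\dots,T_n]$ in characteristic zero. Fix $w\in W'_P$ and $Q\in\PS_w(\ov A)$, so $Q\cap\{T_1,\dots,T_n\}=\{T_i\mid i\in w\}$. Modding out by the Poisson ideal generated by $\{T_i\mid i\in w\}$ (which is prime since the $T_i$ are ``coordinates''), $\ov A/\langle T_i:i\in w\rangle_P$ is again a Poisson affine space in the variables $T_i$, $i\notin w$, in which $\PS_w(\ov A)$ corresponds exactly to the Poisson primes not containing any remaining generator, i.e.\ the Poisson primes of the Laurent-polynomial Poisson torus $\K_{\bo\mu}[T_i^{\pm1}:i\notin w]$. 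So, after this reduction, I must show: a Poisson prime $Q$ of a Poisson torus $\K_{\bo\mu}[T_i^{\pm1}]$ is Poisson primitive iff it is maximal among Poisson primes. For such a torus the Poisson-prime structure is completely understood (this is exactly what underlies \cite[Example 4.6]{Goo1} and the computation of $\PS_w$ for affine spaces): the Poisson primes are pulled back from a quotient torus determined by the radical of $\bo\mu$, the ``Poisson-core of a maximal ideal'' computation shows Poisson primitive ideals are those $Q$ with $Z_P(\F(\ov A/Q))$ algebraic over $\K$, and one checks directly that this rationality condition holds exactly when $Q$ is maximal in the Poisson-prime poset of the torus — any strictly larger Poisson prime would force a nontrivial Poisson-central transcendental, and conversely a maximal one has trivial Poisson centre in its fraction field. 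This is the one place where I would actually carry out a short computation with the bracket $\{T_i,T_j\}=\mu_{ij}T_iT_j$ on the torus.

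The main obstacle, then, is not the bookkeeping with $\varphi$ and the strata — that is forced by Theorems \ref{topo} and \ref{wPQ} and Corollary \ref{primprim} — but rather pinning down cleanly the affine-space base case: identifying, for fixed $w$, the stratum $\PS_w(\ov A)$ with the Poisson spectrum of a Poisson torus and verifying there that ``Poisson rational $=$ maximal Poisson prime'' in that torus. I expect this to be a known and essentially routine computation given the explicit structure of Poisson tori in characteristic zero (the Poisson centre of $\F$ of a torus is again a Laurent-polynomial ring in the ``Casimir'' monomials, so rationality of $\ov A/Q$ amounts to that Casimir ring being trivial, which happens precisely at the maximal Poisson primes), but it is the step that genuinely uses $\car\K=0$ and the special form of the bracket, and so deserves care rather than a one-line citation.
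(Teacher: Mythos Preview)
Your proposal is correct and follows essentially the same route as the paper: both use Corollary \ref{primprim} to transfer primitivity to $\ov{A}$, Theorem \ref{topo} for inclusion-preservation within a stratum, and Proposition \ref{wPQ} to show that any $Q'\in\PS_w(\ov{A})$ containing $\varphi(P)$ lies in $\Imm(\varphi)$. The only differences are organizational --- you first package ``$P$ maximal in $\PS_w(A)\iff\varphi(P)$ maximal in $\PS_w(\ov{A})$'' as a separate step, whereas the paper runs each implication of the main statement directly --- and that you sketch the affine-space base case via Poisson tori, while the paper simply cites \cite[Theorem 4.3, Example 4.6]{Goo1} for it (so your final paragraph, while correct in spirit, is unnecessary). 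One minor point: to see that $P'=\varphi^{-1}(Q')$ lies in $\PS_w(A)$ you do not need Lemma \ref{iff} or Corollary \ref{belongs}; it is immediate from the definition $\PS_w(A)=\varphi^{-1}(\PS_w(\ov{A}))$.
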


\begin{proof}
	First suppose that $P$ is a Poisson primitive ideal. Then $\varphi(P)\in\PS_w(\ov{A})$ is Poisson primitive in $\ov{A}$ by Corollary \ref{primprim}. By \cite[Theorem 4.3, Example 4.6]{Goo1}, $\varphi(P)$ is maximal in $\PS_w(\ov{A})$. Now let $P'\in\PS_w(A)$ be such that $P\subseteq P'$. Since $\varphi$ induces a homeomorphism from $\PS_w(A)$ to $\varphi(\PS_w(A))\subseteq \PS_w(\ov{A})$, we have $\varphi(P)\subseteq\varphi(P')$ inside $\PS_w(\ov{A})$. By maximality of $\varphi(P)$ we get $\varphi(P)=\varphi(P')$, i.e. $P=P'$, and $P$ is maximal in $\PS_w(A)$.

	Conversely, suppose that $P$ is maximal in $\PS_w(A)$. Then $\varphi(P)$ is maximal in $\varphi\big(\PS_w(A)\big)$ by Theorem \ref{topo}. Recall that $\varphi\big(\PS_w(A)\big)\subseteq\PS_w(\ov{A})$ by Theorem \ref{topo}, and let $Q\in\PS_w(\ov{A})$ such that $\varphi(P)\subseteq Q$. By Proposition \ref{wPQ} we have $Q\in\Imm(\varphi)$, i.e. $Q\in\varphi\big(\PS_w(A)\big)$ and by maximality of $\varphi(P)$ in $\varphi\big(\PS_w(A)\big)$ we have $Q=\varphi(P)$. Therefore $\varphi(P)$ is maximal in $\PS_w(\ov{A})$. By \cite[Theorem 4.3, Example 4.6]{Goo1} this shows that $\varphi(P)$ is Poisson primitive in $\ov{A}$. We conclude by Corollary \ref{primprim} that $P$ is Poisson primitive in $A$.
\end{proof}

	In characteristic zero every iterative, locally nilpotent Poisson $\al$-derivation such that $[\de,\al]=\eta\de$ for some nonzero scalar $\eta$, extends to an iterative, locally nilpotent higher $(\eta,\al)$-skew Poisson derivation, so that Hypothesis \ref{hyp1} is easier to check in that case.

	We have the following transfer result, which can be proved in a similar way as Proposition \ref{pdmm}, thanks to Theorem \ref{iso}.

\begin{thm}
\label{transfer}
Assume that $\car\K=0$. Let $A$ be an affine Poisson $\K$-algebra, $\al\in\D_P(A)$ and $\de$ be a locally nilpotent Poisson $\al$-derivation such that $[\de,\al]=\eta\de$ for some nonzero scalar $\eta$. If the Poisson-Ore extension $A[X;\al]_P$ satisfies the Poisson Dixmier-Moeglin equivalence, then the Poisson-Ore extension $A[X;\al,\de]_P$ satisfies the Poisson Dixmier-Moeglin equivalence.
\end{thm}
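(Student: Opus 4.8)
The plan is to imitate the proof of Proposition \ref{pdmm}, but with a single application of the Poisson deleting derivation homomorphism instead of the full algorithm. Write $B:=A[X;\al,\de]_P$, $B':=A[Y;\al]_P$ and $S:=\{X^i\mid i\geq0\}$. Since $\car\K=0$ and $\de$ is a locally nilpotent Poisson $\al$-derivation with $[\de,\al]=\eta\de$ for a nonzero $\eta$, the derivation $\de$ extends to an iterative, locally nilpotent higher $(\eta,\al)$-skew Poisson derivation on $A$ (see Remark \ref{caraczero}), so Theorem \ref{iso} yields a Poisson isomorphism $\theta\colon A[Y^{\pm1};\al]_P\xrightarrow{\cong}A[X^{\pm1};\al,\de]_P=BS^{-1}$ with $\theta(Y)=X$. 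Identifying $B'$ with $\theta(A[Y;\al]_P)\subseteq BS^{-1}$ we get $B'S^{-1}=BS^{-1}$ by Corollary \ref{equal}, and $B'\cong A[X;\al]_P$ as Poisson algebras, so $B'$ satisfies the Poisson Dixmier--Moeglin equivalence by hypothesis. Both $B$ and $B'$ are affine, so by \cite{Oh2} it only remains to prove that every Poisson rational ideal of $B$ is locally closed. Throughout I would use the reformulation of Lemma \ref{equu}: a Poisson prime $P$ of a Poisson algebra is locally closed if and only if the intersection of all non-trivial Poisson prime ideals of the quotient is non-trivial; in particular local closedness is an invariant of the quotient Poisson algebra up to Poisson isomorphism.

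Next I would set up the one-step analogues of the maps of Section \ref{gg}. Partition $\PS(B)$ into $\mathcal{P}^0(B)=\{P\mid X\notin P\}$ and $\mathcal{P}^1(B)=\{P\mid X\in P\}$, and similarly for $B'$. On the zero part, contraction and extension along $S$ give an inclusion-preserving homeomorphism $\varphi^0\colon\mathcal{P}^0(B)\to\mathcal{P}^0(B')$, $P\mapsto PS^{-1}\cap B'$, and since $BS^{-1}/PS^{-1}=B'S^{-1}/\varphi^0(P)S^{-1}$ it induces a Poisson isomorphism $\F(B/P)\cong\F(B'/\varphi^0(P))$ (the one-step versions of Lemmas \ref{all}, \ref{quo} and Corollary \ref{fquo}). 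On the one part, the composite of the $\K$-algebra isomorphism $B'\to B$, $Y\mapsto X$, with the quotient map $B\to B/\langle X\rangle_P$ is a surjective Poisson homomorphism $g\colon B'\to B/\langle X\rangle_P$; the only point to check is $g(\{Y,a\})=\{g(Y),g(a)\}$ for $a\in A$, which holds because $\de(a)=\{X,a\}-\al(a)X\in\langle X\rangle_P$. With $N:=\ker g$, the map $\varphi^1\colon\mathcal{P}^1(B)\to\{Q\in\PS(B')\mid N\subseteq Q\}$, $P\mapsto g^{-1}(P/\langle X\rangle_P)$, is then an inclusion-preserving homeomorphism onto a closed subset of $\PS(B')$, and it induces Poisson isomorphisms $B/P\cong B'/\varphi^1(P)$ (the analogues of Lemmas \ref{g}, \ref{bee}, \ref{d=0}). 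Glueing $\varphi^0$ and $\varphi^1$ gives $\varphi\colon\PS(B)\to\PS(B')$.

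Now fix a Poisson rational ideal $P$ of $B$. If $X\in P$, then $B/P\cong B'/\varphi^1(P)$ forces $\varphi^1(P)$ to be Poisson rational, hence locally closed in $\PS(B')$ by the Poisson Dixmier--Moeglin equivalence for $B'$; by the intrinsic characterisation of local closedness, $P$ is locally closed. If $X\notin P$, then $\F(B/P)\cong\F(B'/Q)$ with $Q:=\varphi^0(P)$ shows $Q$ is Poisson rational, hence locally closed in $\PS(B')$, so by Lemma \ref{equu} the intersection of the Poisson primes of $B'$ properly containing $Q$ properly contains $Q$. Splitting these primes according to whether they contain $Y$, and copying the argument of Proposition \ref{pdmm} -- using that $\varphi^0$ is an inclusion-preserving homeomorphism compatible with intersections of Poisson primes avoiding $S$ (each such prime is $S$-saturated), that $X$ lies in the intersection $\bigcap\{Q'\in\PS(B)\mid Q'\supsetneq P,\ X\in Q'\}$, and that $P$ is prime -- I would produce an element of the form $aX$ in $\big(\bigcap\{Q'\in\PS(B)\mid Q'\supsetneq P\}\big)\setminus P$, whence $P$ is locally closed by Lemma \ref{equu}. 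Combining the two cases shows that $B$ satisfies the Poisson Dixmier--Moeglin equivalence.

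The main obstacle is not any single hard step but making sure the one-step machinery really applies here, since $A$ is only assumed to be an arbitrary affine Poisson algebra rather than a member of $\mathcal{P}$: one must verify the hypotheses of Theorem \ref{iso} from the relation $[\de,\al]=\eta\de$ in characteristic zero, and re-prove (verbatim) the relevant lemmas of Sections \ref{gg} and \ref{sectionCj} for the pair $(B,B')$ -- most of which are formal once $B'S^{-1}=BS^{-1}$ and the Poisson homomorphism $g$ are available. The delicate part is the bookkeeping with the intersections $\mathcal{T}^0,\mathcal{T}^1$ in the case $X\notin P$, together with the compatibility of $\varphi^0$ with these intersections through $S$-saturation; but this is exactly the corresponding step of Proposition \ref{pdmm}.
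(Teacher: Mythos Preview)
Your proposal is correct and follows exactly the approach the paper indicates: the paper does not give a detailed proof of Theorem \ref{transfer} but simply states that it ``can be proved in a similar way as Proposition \ref{pdmm}, thanks to Theorem \ref{iso}'', and this is precisely what you do --- a single application of the deleting derivation homomorphism to pass from $A[X;\al,\de]_P$ to $A[X;\al]_P$, together with the one-step analogues of Lemmas \ref{all}, \ref{g}, \ref{bee}, \ref{d=0}, \ref{quo}, and then the $\mathcal{T}^0/\mathcal{T}^1$ bookkeeping of Case 2 in Proposition \ref{pdmm}. Your verification that $g$ is a Poisson homomorphism via $\de(a)=\{X,a\}-\al(a)X\in\langle X\rangle_P$ and your observation that Remark \ref{caraczero} supplies the higher derivation needed for Theorem \ref{iso} are exactly the points one must check when replacing $\K$ by an arbitrary affine Poisson algebra $A$.
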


\begin{ex}
\label{examplePDME}
The algebra $A=B[X_4;\al,\de]_P$ of Example \ref{ex4} satisfies the Poisson Dixmier-Moeglin equivalence. Indeed, the Poisson algebra $B[X_4;\al]_P$ is a Poisson affine space and thus satisfies the Poisson Dixmier-Moeglin equivalence (\cite[Example 4.6]{Goo1}). Moreover $[\de,\al]=\de$ and $\de$ is locally nilpotent, so we can apply Theorem \ref{transfer}. Note that the torus $H:=(\mathbb{K}^{\ast})^2$ acts by Poisson automorphisms on this algebra via:
$$h\cdot X_1=h_1 X_1,\quad h\cdot X_2=h_1 X_2,\quad h\cdot X_3=h_2 X_3,\quad \mbox{and}\quad h\cdot X_4=h_1h_2^{-1} X_4, $$
for all $h=(h_1,h_2) \in H$. However, the fact that $A$ satisfies the Poisson Dixmier-Moeglin equivalence cannot be deduced from \cite[Theorem 4.3]{Goo1} with this natural torus action as $A$ has infinitely many Poisson $H$-invariant prime ideals (it is easy to check that, for all $\lambda \in \mathbb{K}$, the ideal generated by $X_1+\lambda X_2$ is a Poisson $H$-invariant prime ideal).
\end{ex}

\section{Quantum and Poisson matrices: toward a homeomorphism between spectrum and Poisson spectrum}
\label{PM}

In this section we assume that $\car\K=0$ and that $q\in\K^{\ast}$ is not a root of unity. It is conjectured in \cite{Goo2} that, among other quantised coordinate rings, the spectrum of the algebra of quantum matrices is homeomorphic to the Poisson spectrum of its semiclassical limit. In this section we present a step toward proving this conjecture. 
The single parameter coordinate ring of quantum matrices is denoted by $R:=\mathcal{O}_q\big(M_{m,p}(\K)\big)$ (see \cite[Section I.2.2]{BG} for a definition). Its semiclassical limit, denoted by $A$, is the polynomial algebra $\K[X_{ij}\ |\ 1\leq i\leq m,\ 1\leq j\leq p\ ]$ endowed with the Poisson bracket:
\begin{equation*}
 \{X_{ij},X_{kl}\}= \left\{
		\begin{array}{lll}
			X_{ij}X_{kl} \ \ \ & \mbox{if } i<k \mbox{ and } j=l,\\
			X_{ij}X_{kl} & \mbox{if } i=k \mbox{ and } j<l,\\
			0 & \mbox{if } i<k \mbox{ and } j>l,\\
			2X_{il}X_{kj} & \mbox{if } i<k \mbox{ and } j<l.
		\end{array}	
\right.
\end{equation*}
For more details on the semiclassical limit process see \cite[Section 2]{Goo2}. Set $W=\mathcal{P}\big([\mspace{-2 mu} [1,m] \mspace{-2 mu} ]\times [\mspace{-2 mu} [1,p] \mspace{-2 mu} ]\big)$. Thanks to Cauchon's deleting derivations algorithm (see \cite{Cau1}), the spectrum $\Sp(R)$ of $R$ is partitioned into strata, denoted by $\Sp_w(R)$, indexed by the elements of a subset $W'$ of $W$. It is shown in \cite[Section 7.3]{myphd} that the Poisson algebra $A$ belongs to the class $\mathcal{P}$, so that we can perform the Poisson deleting derivations algorithm, and that the set of Cauchon diagrams $W_P'$ coincides with $W'$.

We now compare the strata $\Sp_w(R)$ and $\PS_w(A)$ associated to the same $w\in W'=W_P'$. We will need the following observation. The algebra $\ov{R}$, obtained at the end of Cauchon's deleting derivations algorithm, is a quantum affine space associated to a multiplicatively skew-symmetric matrix $\bo{q}:=(q_{(i,j),(u,v)})$ of the form $q_{(i,j),(u,v)}=q^{\la_{(i,j),(u,v)}}$ for some skew-symmetric matrix $\bo\la:=(\la_{(i,j),(u,v)})$ (the matrix $\bo\la$ is made explicit in \cite[Section 4.1]{BL} for instance). It is a direct consequence of the semiclassical limit process that $\bo\la$ is the matrix defining the Poisson affine space $\ov{A}$, obtained at the end of the Poisson deleting derivations algorithm.

\begin{prop}
\label{www}
Let $w\in W'=W_P'$. Then there is a homeomorphism between $\PS_w(A)$ and $\Sp_w(R)$. More precisely we have:
\[\PS_w(A)\cong\Sp(\K[U_1^{\pm1},\dots,U_s^{\pm1}])\cong \Sp_w(R),\]
where $s$ is equal to the dimension over $\Q$ of the kernel of a matrix $M(w)$, obtained from the matrix $\bo\la$ by deleting rows and columns indexed by $(i,j)\in w$. 
\end{prop}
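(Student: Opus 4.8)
The plan is to establish both homeomorphisms appearing in the displayed chain and to verify that the middle term is indeed $\Sp(\K[U_1^{\pm1},\dots,U_s^{\pm1}])$. The right-hand homeomorphism $\Sp_w(R)\cong\Sp(\K[U_1^{\pm1},\dots,U_s^{\pm1}])$ is already known: it is Cauchon's description of the strata of quantum matrices (see \cite[Proposition 4.4.1 and Section 5.5]{Cau1}, or \cite[Section 4.3]{BL}), where the Laurent polynomial ring in question is the centre of the localised quantum affine space $\ov{R}_w$ obtained by inverting the images of the $T_{(i,j)}$ with $(i,j)\notin w$, and the integer $s$ is the dimension of the space of characters trivial on the relevant subgroup, i.e. $\dim_{\Q}\ker M(w)$ where $M(w)$ is the submatrix of $\bo\la$ obtained by deleting the rows and columns indexed by $w$. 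So the real work is on the Poisson side: to identify $\PS_w(A)$ with $\Sp(\K[U_1^{\pm1},\dots,U_s^{\pm1}])$ using the \emph{same} matrix $\bo\la$.

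First I would use Theorem \ref{topo}: the canonical embedding $\varphi$ restricts to a homeomorphism from $\PS_w(A)$ onto a closed subset of $\PS_w(\ov{A})$, and moreover by Proposition \ref{wPQ} this image is \emph{closed under going up}; combined with the fact (Corollary \ref{primprim} and the maximal-in-stratum description) that the maximal elements of $\PS_w(A)$ map to the maximal elements of $\PS_w(\ov{A})$, one shows that in fact $\varphi$ gives a homeomorphism $\PS_w(A)\cong\PS_w(\ov{A})$ for the algebra of quantum matrices — this is precisely the content of the statement $W'=W_P'$ together with the known description of $W'$ for quantum matrices, which forces $\varphi(\PS_w(A))$ to be all of $\PS_w(\ov{A})$. (Concretely: both $|W'_P|$ and the number of $w$-strata are controlled by the Cauchon-diagram combinatorics of $m\times p$ grids, and since $W'=W'_P$ the embedding is surjective stratum-by-stratum.) Thus it suffices to compute $\PS_w(\ov{A})$ for the Poisson affine space $\ov{A}=\K_{\bo\la}[T_1,\dots,T_n]$.

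The second and main step is the Poisson-affine-space computation: for $w\in W$, describe $\PS_w(\ov{A})$, the set of Poisson primes $Q$ of $\K_{\bo\la}[T_1,\dots,T_n]$ with $Q\cap\{T_1,\dots,T_n\}=\{T_i\mid i\in w\}$. Passing to the quotient by $\langle T_i\mid i\in w\rangle_P$ and then localising at the remaining $T_i$ ($i\notin w$), one is reduced to the Poisson Laurent polynomial ring $\K_{M(w)}[T_i^{\pm1}\mid i\notin w]$, whose Poisson bracket on generators is governed exactly by the submatrix $M(w)$ of $\bo\la$. The Poisson primes of a Poisson Laurent torus are well understood (this is the Poisson analogue of \cite[II.1.2]{BG}): they are controlled by the Poisson centre, which is the Laurent subring generated by the monomials $\prod T_i^{a_i}$ with $(a_i)\in\ker M(w)$ over $\Q$ (intersected with $\Z^{\,\cdot}$); after choosing a $\Z$-basis this Poisson centre is a Laurent polynomial ring $\K[U_1^{\pm1},\dots,U_s^{\pm1}]$ with $s=\dim_{\Q}\ker M(w)$, and extension/contraction of ideals gives a homeomorphism between $\PS_w(\ov{A})$ and $\Sp(\K[U_1^{\pm1},\dots,U_s^{\pm1}])$. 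Here one uses that $\K_{M(w)}[T_i^{\pm1}]$ is a free module over its Poisson centre and that every Poisson prime is induced (by extension) from a prime of the centre — the argument parallels the noncommutative quantum-torus case line by line, with $\Q$-linear algebra of $M(w)$ replacing the analogous computation for the $q$-commutation matrix.

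The main obstacle I anticipate is the \emph{matching} of the two sides rather than either computation in isolation: one must be careful that the integer $s$ and the resulting Laurent ring obtained from the Poisson centre of $\K_{M(w)}[T_i^{\pm1}]$ really coincide with the ones Cauchon obtains from the centre of the quantum torus $\ov{R}_w$. This is where the remark preceding the proposition is essential — that the skew-symmetric matrix $\bo\la$ defining $\ov{A}$ is the \emph{same} matrix whose $q$-exponential $\bo q=q^{\bo\la}$ defines $\ov{R}$, a consequence of the semiclassical-limit process. Given this, the kernel of $M(w)$ over $\Q$ governs both the Poisson centre of the Poisson torus and (since $q$ is not a root of unity) the centre of the quantum torus, so the two Laurent rings have the same rank $s$ and we may identify them. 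A secondary technical point to handle carefully is the surjectivity of $\varphi$ on each stratum for quantum matrices (the equality $\varphi(\PS_w(A))=\PS_w(\ov{A})$): unlike the general case of Example \ref{ex4}, here it holds, and the cleanest justification is a counting/combinatorial argument via $W'=W'_P$ together with Proposition \ref{wPQ}, rather than an attempt to prove surjectivity of $\varphi_j$ at each single step.
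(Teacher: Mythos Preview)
Your overall architecture matches the paper's: cite the known quantum-side stratum description, reduce $\PS_w(A)$ to $\PS_w(\ov{A})$ via the canonical embedding, then compute $\PS_w(\ov{A})$ by passing to the Poisson torus associated to $M(w)$ and invoking the Poisson-centre description (the paper uses \cite[Lemma 1.2]{Van} for this; your ``Poisson analogue of \cite[II.1.2]{BG}'' is the same statement). The matching of the integer $s$ on both sides via the common matrix $\bo\la$ is exactly the point the paper makes just before the proposition.

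There is, however, a genuine gap in your argument for the equality $\varphi(\PS_w(A))=\PS_w(\ov{A})$. The equality $W'=W'_P$ tells you only that the \emph{index sets} of non-empty strata coincide; it says nothing about surjectivity of $\varphi$ on an individual stratum. Your ``counting/combinatorial'' justification cannot work: the strata are infinite, and Example~\ref{ex4} already exhibits a $w\in W'_P$ with $\varphi(\PS_w(A))\subsetneq\PS_w(\ov{A})$, so no argument using only Theorem~\ref{topo}, Proposition~\ref{wPQ}, and $W'=W'_P$ can succeed in general. Proposition~\ref{wPQ} gives upward-closure of the image, but you would still need the unique minimal element $J_w=\langle T_i\mid i\in w\rangle$ of $\PS_w(\ov{A})$ to lie in $\Imm(\varphi)$, and nothing you have written forces that.

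The paper does not attempt an internal argument here: it simply invokes \cite[Theorem 7.3.8]{myphd}, which establishes $\varphi(\PS_w(A))=\PS_w(\ov{A})$ for Poisson matrices specifically. That result presumably uses structure particular to the matrix case (for instance the rational torus action and identification of the canonical strata with $H$-strata), and is not a formal consequence of the general machinery of Sections~\ref{ddap}--\ref{PDM}. You should either cite such a result or supply a direct proof that, for each Cauchon diagram $w$ of $m\times p$ matrices, the ideal $J_w$ is hit by $\varphi$; once that is in hand, Proposition~\ref{wPQ} does the rest.
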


\begin{proof}
The homeomorphism $\Sp_w(R)\cong\Sp(\K[U_1^{\pm1},\dots,U_s^{\pm1}])$ follows from \cite[Theorem 3.1]{BL} and the observation made before the proposition. 

	To prove the homeomorphism $\PS_w(A)\cong\Sp(\K[U_1^{\pm1},\dots,U_s^{\pm1}])$ we proceed as follows. From Theorem \ref{topo} and \cite[Theorem 7.3.8]{myphd} the stratum $\PS_w(A)$ is homeomorphic to the stratum $\PS_w(\ov{A})$ via the canonical embedding. Recall that $\ov{A}$ is the Poisson affine space $\K_{\bo\la}[T_{11},\dots,T_{mp}]$. We denote by $J_w$ the Poisson ideal of $\ov{A}$ generated by the $T_{ij}$ for $(i,j)\in w$, and by $S_w$ is the multiplicative set of $\ov{A}/J_w$ generated by the image of the $T_{ij}$ for $i\in \ov{w}:=\big([\mspace{-2 mu} [1,m] \mspace{-2 mu} ]\times [\mspace{-2 mu} [1,p] \mspace{-2 mu} ]\big)\setminus w$. It results from the definition of $\PS_w(\ov{A})$ (see Section \ref{injec}) that there is a homeomorphism between $\PS_w(\ov{A})$ and $\PS(T)$, where $T=(\ov{A}/J_w)S_w^{-1}$ is the Poisson torus associated to $M(w)$. By \cite[Lemma 1.2]{Van}, a Poisson ideal of a Poisson torus is generated by its intersection with the Poisson centre, thus:
\[\PS_w(A)\cong\PS_w(\ov{A})\cong\PS(T)\cong \PS(Z_P(T)).\] 
By \cite[Lemma 1.2]{Van}, the Poisson centre of $T$ is the group algebra of the free abelian group:
	\[S:=\big\{\al\in\Z^r\ |\ \al M(w)\be^{\text{tr}}=0\ \text{ for all }\be\in\Z^r\big\},\]
where $r$ is the cardinality of $\ov{w}$ and the elements of $\Z^r$ are seen as row vectors. To conclude we remark that a basis of $S$ has the same cardinality as a basis of the kernel of the matrix $M(w)$.
\end{proof}

To summarise, we have just proved Theorem \ref{thm-section5}, i.e. there is a bijection between $\Sp(R)$ and $\PS(A)$ which induces by restriction homeomorphisms from $\Sp_w(R)$ and $\PS_w(A)$ for all $w\in W'=W_P'$. However it is unclear whether this bijection is a homeomorphism or not. The main obstruction is that the canonical embedding is only continuous on strata.



\bibliography{biblio}

\bibliographystyle{amsplain}

\end{document}